\renewcommand{\d}{{\rm d}} 
\newcommand{\eps}{\varepsilon}
\newcommand\numberthis{\addtocounter{equation}{1}\tag{\theequation}}
\definecolor{Red}{rgb}{1,0,0}
\def\emptyset{\varnothing} 
\def\d{{\rm d}}
\def\cA{{\mathcal A}}
\def\cD{{\mathcal D}}
\def\cS{{\mathcal S}}
\font\tenBbb=msbm10 
\font\sevenBbb=msbm7 
\font\fiveBbb=msbm5 
\def\2{\mathbf 2}
\newcommand{\R}     {\mathbb{R}} 
\newcommand{\N}     {\mathbb{N}} 
\renewcommand{\P}   {\mathbb{P}} 
\newcommand{\E}     {\mathbb{E}}
\newcommand{\smfrac}[2]{\textstyle{\frac {#1}{#2}}}
\def\1{{\mathchoice {1\mskip-4mu\mathrm l}      
{1\mskip-4mu\mathrm l} 
{1\mskip-4.5mu\mathrm l} {1\mskip-5mu\mathrm l}}} 
\def\comment#1{} 
\newtheoremstyle{thm}{2ex}{2ex}{\itshape\rmfamily}{} 
{\bfseries\rmfamily}{}{1.7ex}{} 
\newtheoremstyle{rem}{1.3ex}{1.3ex}{\rmfamily}{} 
{\itshape\rmfamily}{}{1.5ex}{}
\numberwithin{equation}{section}
\renewcommand{\theequation}{\thesection.\arabic{equation}} 
\newtheorem{theorem}{Theorem}[section] 
\newtheorem{lemma}[theorem]{Lemma} 
\newtheorem{prop}[theorem] {Proposition} 
\newtheorem{cor}[theorem]  {Corollary}
\theoremstyle{remark}
\newtheorem{defn}[theorem] {Definition}
\newtheorem{remark}[theorem]{Remark} 
\begin{document}

\begin{frontmatter}
\title{From clonal interference to Poissonian interacting trajectories}
\runtitle{Clonal interference}

\begin{aug}
\author[A]{\fnms{Felix}~\snm{Hermann}\ead[label=e1]{hermann@math.uni-frankfurt.de}},
\author[B]{\fnms{Adrián}~\snm{González Casanova}\ead[label=e2]{agonz591@asu.edu}},
\author[D]{\fnms{Renato}~\snm{Soares dos Santos}\ead[label=e4]{rsantos@mat.ufmg.br}},
\author[E]{\fnms{András}~\snm{Tóbiás}\ead[label=e5]{tobias@cs.bme.hu}},
\and
\author[A]{\fnms{Anton}~\snm{Wakolbinger}\ead[label=e6]{wakolbinger@math.uni-frankfurt.de}}

\address[A]{Goethe-Universität Frankfurt am Main, FB 12, Institut für Mathematik, 60629 Frankfurt, Germany\printead[presep={,\ }]{e1,e6}}

\address[B]{School of Mathematical and Statistical Sciences, and Biodesign Institute, Arizona State University.  797 E Tyler St, Tempe, AZ 85281, United States\printead[presep={,\ }]{e2}}

\address[D]{Departamento de Matem\'atica, Universidade Federal de Minas Gerais, Av.~Antônio Carlos 6627,  31270-901
Belo Horizonte, Brazil\printead[presep={,\ }]{e4}}

\address[E]{Department of Computer Science and Information Theory, Faculty of Electrical Engineering and Informatics, Budapest University of Technology and Economics, Műegyetem rkp. 3., H-1111 Budapest, Hungary, and HUN-REN Alfréd Rényi Institute of Mathematics, Reáltanoda utca 13--15, H-1053 Budapest, Hungary \printead[presep={,\ }]{e5}}

\end{aug}

\begin{abstract}
We consider a population whose size $N$ is fixed over the generations, and in which random beneficial mutations arrive at a rate of order $1/\log N$ per generation. In this so-called Gerrish--Lenski regime, typically a finite number of contending mutations are present together with one resident type.
These mutations compete for fixation, a phenomenon addressed as clonal interference. We introduce and study a Poissonian system of  interacting trajectories (PIT), and prove that it arises as a large population scaling limit of the logarithmic sizes of the contending clonal subpopulations  in a continuous-time Moran model with strong selection.   We show that the PIT  exhibits an almost surely positive asymptotic rate of fitness increase (speed of adaptation), which turns out to be finite if and only if fitness increments have a finite expectation. We relate this speed to heuristic predictions from the literature. Furthermore, we derive a functional central limit theorem for the fitness of the resident population in the PIT. 
\end{abstract}

\begin{keyword}[class=MSC]
\kwd{92D15, 60G55, 60F17, 60J85, 60K05}
\end{keyword}

\begin{keyword}
\kwd{clonal interference}
\kwd{random genetic drift}
\kwd{selection}
\kwd{fixation}
\kwd{Moran model}
\kwd{Gerrish--Lenski mutation regime}
\kwd{Poissonian interacting trajectories}
\kwd{branching processes}
\kwd{renewal processes}
\kwd{refined Gerrish--Lenski heuristics}
\kwd{speed of adaptation}
\kwd{functional central limit theorem}
\end{keyword}

\end{frontmatter}

\setcounter{tocdepth}{3}


\setcounter{section}{0}

\bigskip

\section{Introduction}\label{Intro}
Clonal interference~\cite{GL98,G01,PK07,BGPW19} is the interaction between multiple beneficial mutations that compete for fixation in a population. 
In this paper we introduce a {\em Poissonian system of interacting trajectories} (PIT) that in an appropriate parameter regime  emerges as a scaling limit of clonal subpopulation sizes and thus
captures important  features of clonal interference.
The sources of randomness in the PIT as well as the deterministic interactive dynamics of the trajectories are defined at the beginning of Section 2, and a cut-out of a realisation of the PIT is displayed in the right panel of Figure~\ref{fig-moran_sim}. As we will explain shortly, the PIT arises naturally in the context of population genetics, but we believe that it is of interest in its own right.
Consequently, part of the present work is devoted to a first study of its properties, and the corresponding sections (2.1,  2.3, 5 and 6)  can be read without background in population genetics. A substantial part of our work, however, is devoted to showing that the PIT arises as a scaling limit (as the total population size diverges) in a multitype Moran model with recurrent beneficial mutations.
Here, the Moran model was chosen for convenience, but we believe that the PIT is universal in the sense  that an analogous limiting result holds e.g.\ also for a large class of Cannings models. 

\textbf{Heuristics and scaling regime.}
Let us now give a brief description of how the PIT appears in a population-genetic framework. Consider a population whose size $N$ is large and constant over the generations.
Beneficial mutations arrive in the population at rate $\mu_N$ per generation, and each of these mutations induces a random fitness increment, where the fitness increments (denoted by $A_i$) are assumed to be independent and identically distributed. Individuals carrying the same type form a {\em (clonal) subpopulation}.
Figure~\ref{fig-moran_sim} (left) illustrates how relative subpopulation sizes evolve over time, approximating logistic curves for large~$N$. Logarithmic size-scaling transforms the exponential growth and decline phases of these logistic curves to linear trajectories while the  competition phases shrink to points where a trajectory reaches height 1 with a certain positive slope (and is kinked to slope 0), and at the same time another trajectory leaves height 1 with the opposite
slope (Figure~\ref{fig-moran_sim}, mid). As it turns out, a scaling limit of this picture leads to a system of piecewise linear interacting trajectories depicted
in Figure~\ref{fig-moran_sim} (right).

\begin{figure}
  \includegraphics[width=4.7cm,trim={0.5cm, 1cm, 0.8cm, 2cm},clip]
   {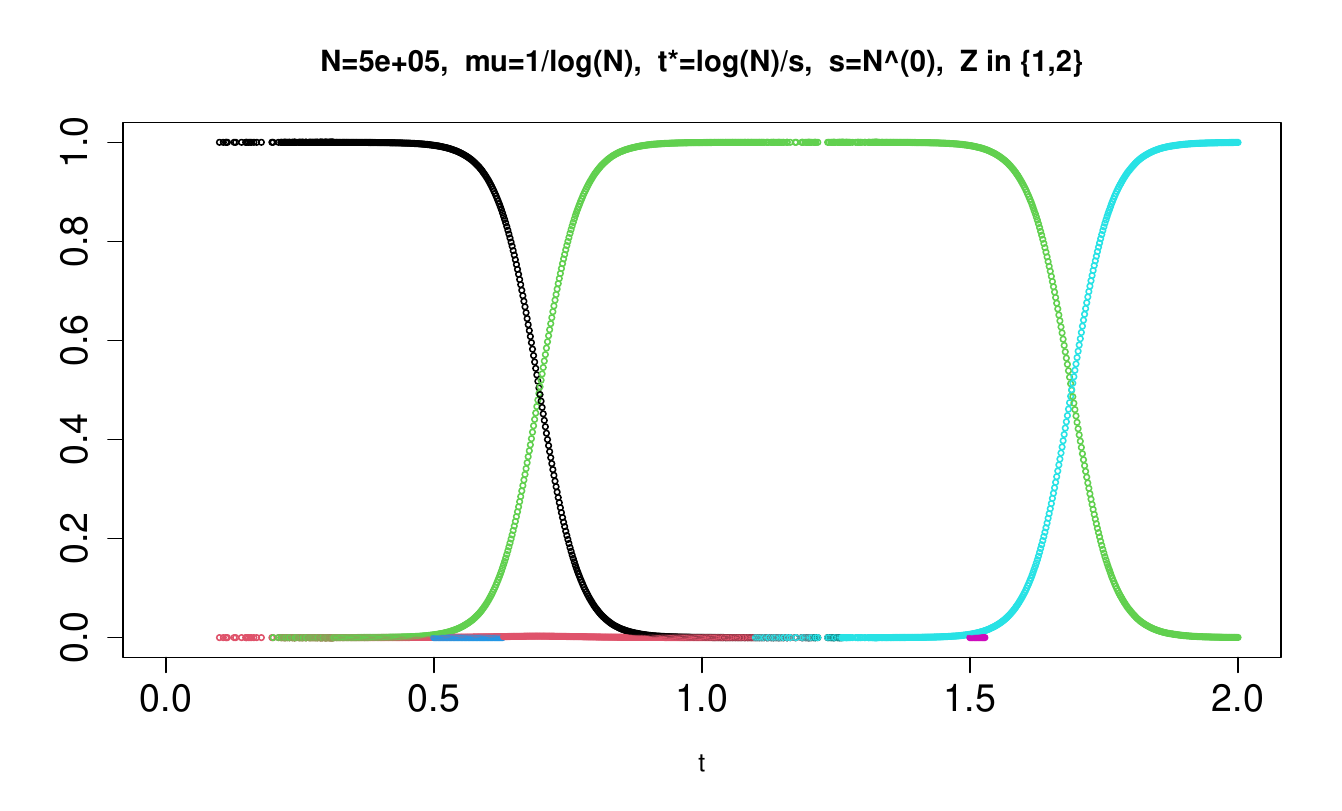}
  \includegraphics[width=4.7cm,trim={0.5cm, 1cm, 0.8cm, 2cm},clip]
   {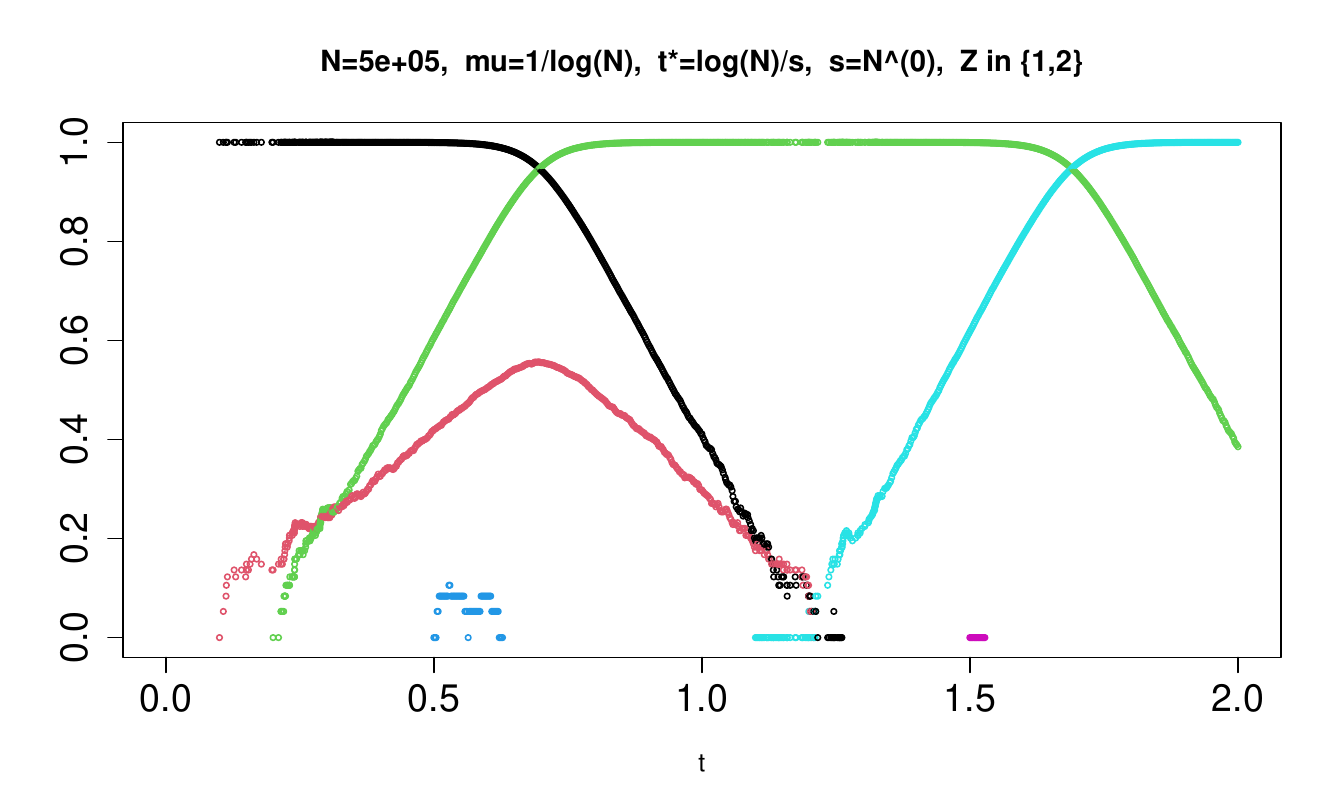}
  \includegraphics[width=4.7cm,trim={0.5cm, 1cm, 0.8cm, 2cm},clip]
   {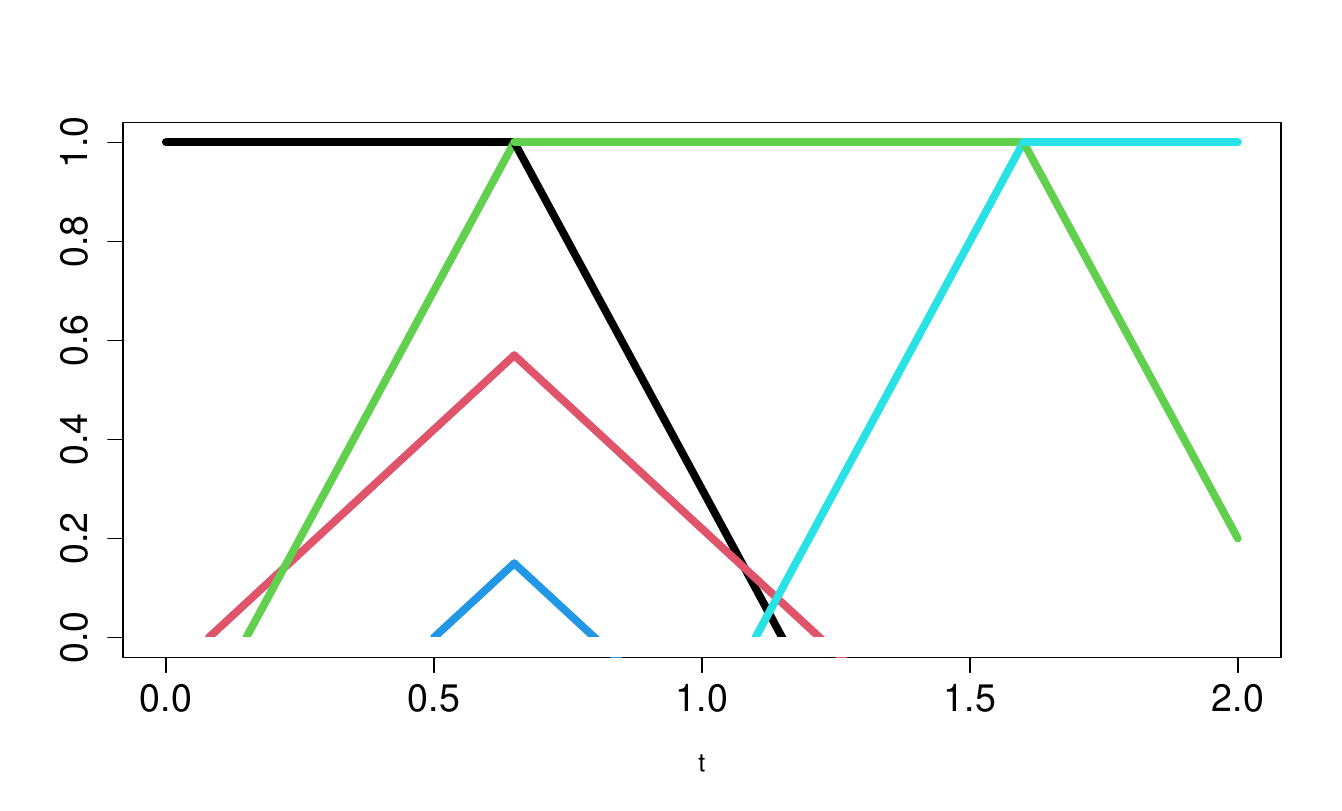} 
   \caption{\label{fig-moran_sim}
     This figure depicts a simulation of a Moran model with mutation and selection
     (cf.\ Section~\ref{sec-model}) in the Gerrish--Lenski regime
     with population size $N=500\,000$ and fitness increment distribution
     $\gamma = \frac12\delta_{\{1\}}+\frac12\delta_{\{2\}}$.
     Left: sub-population sizes divided by $N$, approximating logistic curves;
     middle: logarithmic sub-population sizes divided by $\log N$, approximately giving piecewise linear trajectories and making effects of clonal interference on the first (red) mutation visible;
     right: stylized version of these trajectories providing a good guess of the scaling limit -- i.e.\ the PIT.
   }
\end{figure}

In this study we focus on \emph{strong} selection, i.e.\ where the distribution of the $A_i$ does not scale with $N$.
Then, the linear growth of logarithmic subpopulation sizes appears on a timescale of $\log N$ generations per unit.

For the mutation rate we consider the case where $\mu_N$ is of order $1/\log N$. We refer to this as the \emph{Gerrish--Lenski regime} since it was proposed by the authors of \cite{GL98}. Indeed in the setting at hand, this regime is characteristic for a non-trivial finite number of subpopulations contending for residency and fixation, which is the hallmark of clonal interference.

\textbf{Main results.}
In the framework of the Moran model (that is briefly described in Section~\ref{sec-THEtheorem} and formally specified in Section~\ref{sec-model}), our main result, Theorem~\ref{theorem-THE}, asserts the joint distributional convergence (as $N\to \infty$) of four relevant functionals to the corresponding functionals of the PIT: (i) the rescaled logarithmic frequencies, (ii) the fitness values of clonal subpopulations, (iii) the average population fitness, and (iv) the ancestral tree of mutations.

Our main result on the PIT itself concerns the existence of a speed of adaptation, i.e., the average increase of fitness:   
Denote by $F(t)$ the fitness of the resident at time $t >0$. 
We show in Theorem~\ref{theorem-speed} that $F(t)/t$ converges almost surely as $t \to \infty$ to a deterministic limit. The limit is positive and finite whenever the distribution of fitness increments has a finite first moment, and infinite otherwise. In the special case when the fitness increments are deterministic and constant, Proposition~\ref{prop-1case} provides an explicit expression for the speed.
In general, obtaining a precise numerical value or an explicit formula for the speed seems difficult, and we postpone investigations in this direction to future work. Furthermore, we derive a functional central limit theorem for the fitness of the resident population in the PIT.

\textbf{Related work.} Gerrish and Lenski~\cite{GL98} were particularly interested in a prediction of the slowing down of the speed of adaptation caused by clonal interference.
Their heuristics consisted in eliminating contending mutations that are outcompeted by a
fitter mutation that is born before they reach residency.
This heuristics was refined by Baake~et~al.~\cite{BGPW19}, where it was further considered  that mutations from the past can also affect the fate of a contending mutation, using a framework which already carried certain features of the PIT. We will elaborate more on this in Section~\ref{speedheur}.

In the setting of adaptive dynamics, the effects of clonal interference were analysed by Billiard and Smadi~\cite{BS17, BS19}. These authors studied the case of three competing types rigorously. In [5] they discuss three parameter regimes for the mutation rate, where one of these (being intermediate between the regimes of rare and frequent mutations) corresponds to the Gerrish-Lenski regime. In this regime, 
for
most of the time there is a unique resident subpopulation, 
and new mutations typically happen in the resident population.

In the case of logistic competition, the regime of rare mutations was investigated in the seminal paper by Champagnat~\cite{C06},
see also the references therein. Scaling time by the mutation rate, the durations of ``selective sweeps'' vanish, and the process of the fitness of the
population converges to a pure jump process called the trait substitution sequence of adaptive
dynamics, as it was shown in~\cite{C06}. The case where coexistence is possible was first studied
by Champagnat and Méléard~\cite{CM11}.

Selective sweeps in population-genetic models (with constant population size $N$) were already studied earlier, see e.g.~\cite{KHL89,DS04}.

A rare mutation regime with $\mu_N\sim N^{-a}$ and selective advantages of mutants scaling like $s_N\sim N^{-b}$ was considered by Gonz\'alez Casanova~et~al.~\cite{GKWY16}.
There, conditions were imposed on $a$ and $b$ that guarantee that with high probability as $N\to \infty$ no mutant family is present in the population beside the resident type. In particular, these conditions implied that the times at which a new resident is established converge to a homogeneous Poisson process on the timescale whose unit is $(\mu_N s_N)^{-1}$ generations.  Recently, it was shown by Udomchatpitak and Schweinsberg~\cite{US24} that the same convergence remains true in the mutation regime $\mu_N = o \big( \frac 1{\log N} \big)$ for $s_N \sim N^{-b}$ with any $0<b<1$.

We also point out that piecewise linear trajectories describing the scaling limits of logarithmic frequencies of mutant families appear already in the paper \cite{DM11} by Durrett and Mayberry,
which is an important source of inspiration for this manuscript.
These authors consider polynomial (and thus much faster than inverse logarithmic) mutation rates per generation, leading to a regime where large numbers of ``mutations on mutations'' occur already in the growth phase of a mutant family, such that random genetic drift plays asymptotically no role in the large-population limit. Another difference to our setting is that the authors of~\cite{DM11} consider deterministic fitness increments. Altogether this lead to deterministic limiting systems. 
This polynomial (a.k.a.\ power-law) mutation regime has also been studied in various models of adaptive dynamics~\cite{BCS19,CMT21,CKS21,EK21,BPT23,P23,EK23} and branching processes~\cite{B24}.
These models typically come with a fixed mutation graph; the possible types/traits of individuals form a countable (often finite) set, and mutations between some of these types are possible. The scaling limit does not feature clear parent$\to$child relations anymore since mutations do not appear as a point process but rather as a piecewise constant influx,  even between mesoscopic (size $\Theta(N^\beta)$, $\beta<1$) subpopulations.

A two-type model with logistic competition and with back-and-forth mutations between a wildtype and a strongly beneficial type was studied by Smadi~\cite{S17} for various mutation regimes, including the regime analogous to~$\mu_N \asymp 1$.

\textbf{Structure of the paper.} 
In Section~\ref{sec-modelresults} we describe the limiting as well as the prelimiting model, and present our main results. 
 Specifically, in Section~\ref{PIT} we introduce the dynamics of interacting trajectories and the PIT, in Section~\ref{sec-THEtheorem} we state the corresponding large-population limit result (Theorem~\ref{theorem-THE}), and in Section~\ref{sec-speedresult} we present our results on properties of the PIT (speed of adaptation, functional CLT). As a preparation for the proof of Theorem~\ref{theorem-THE} given in Section~\ref{sec-convergenceproof}, Section~\ref{sec-IT} discusses aspects of the deterministic interactive dynamics that underlies the PIT, and Section~\ref{sec-model} gives a short summary of relevant concepts of the Moran model with recurrent beneficial
mutations and random fitness effects.
The proofs of the results related to the speed of adaptation in the PIT are given in Section~\ref{sec-speedproof}, which can be read independently of Sections~\ref{newsec4} and~\ref{sec-convergenceproof}. Section~\ref{sec:fixandGLh}, which can be read independently of Sections~\ref{newsec4}--\ref{sec-convergenceproof}, discusses the concept of fixation  of mutations within the PIT, and   puts the heuristics from~\cite{GL98} and~\cite{BGPW19} for the speed of adaptation into the framework of the PIT. 
 Section~\ref{modext} gives a glimpse on possible model extensions, including an outlook on regimes of moderate and ``nearly strong'' selection.

\section{Model and main results}\label{sec-modelresults}
\subsection{A Poissonian system of interacting trajectories (PIT)}\label{PIT}
With the picture in mind that was described in the paragraph {\it Heuristics and scaling regime} of Section~\ref{Intro}, we are now going to define in the present section a system of continuous, piecewise linear $[0,1]$-valued trajectories $(H_i)_{i\in \N_0}$ whose interactive dynamics, given a random input, is deterministic. For the sake of proving our scaling limit result Theorem~\ref{theorem-THE}, the random input will be replaced by a deterministic one in Section~\ref{sec-IT}.

  The model parameters for the random input are a positive real number $\lambda$ and a  probability distribution $\gamma$ on $(0,\infty)$.
Let  $T_1 < T_2 < \cdots$ be the points of a Poisson process with intensity measure  $\lambda\,\d t$, $t \in \R_+$. Given $(T_i)_{i\in \N}$ let $A_1,A_2,\ldots$ be iid with distribution $\gamma$, and conditionally on $(T_i,A_i)_{i\in \N}$, let the random variables $B_i$, $i=1,2,\ldots$, be independent and Bernoulli-distributed with
\begin{equation}\label{survprob}
\P(B_i =1) = \frac {A_i}{1+A_i}.
\end{equation}
The intuition behind~\eqref{survprob} is as follows: For $a>0$ the quantity $\frac{a}{1+a}$ is the survival probability of a binary, continuous-time Galton--Watson process with birth rate $1+a$ and death rate $1$, see e.g. page~109 of Athreya and Ney \cite{athreya1972branching}. Likewise, $\frac{a}{1+a}$ is the fixation probability of a  mutant with (strong) selective advantage $a$ in a standard Moran$(N)$-model as $N \to \infty$, see e.g. \cite[Section 2.4]{B21ii}.
In this sense, the $B_i$ provide a ``thinning by survival''.

We will address the $T_i$, $i \in \N$, as \emph{immigration times} (or \emph{birth times}), and we use the convention \mbox{$T_0=0$}. We denote the space of continuous and piecewise linear trajectories from $[0,\infty)$ to $[0,1]$ by~$\mathcal C_{\rm PL}$. Each $h\in \mathcal C_{\rm PL} $ has at time $t$ a {\em height} $h(t)$ and a (right) {\em slope}
\begin{equation}\label{defvh}
   v_{h}(t):=  \lim\limits_{\delta \downarrow 0} \tfrac 1\delta(h(t+\delta)-h(t)). 
\end{equation}
\begin{defn}[Dynamics of the PIT] \label{PITdyn}The \emph{Poissonian system of  interacting trajectories} with parameters $(\lambda,\gamma)$, or briefly the $\mathrm{PIT}(\lambda,\gamma)$, is a $(\mathcal C_{\mathrm{PL}})^{\N_0}$-valued random variable $\mathscr H=(H_i)_{i \in \N_0}$ resulting from the following interactive dynamics (where we abbreviate $v_{H_i}(t) =: V_i(t)$ and write $V_i(t-)$ for the left limit of $r \mapsto V_i(r)$ at time $t$).
\begin{itemize}
        \item $H_0(0) = 1$, $H_1(0) = H_2(0)= \cdots = 0$, and all trajectories $H_i$, $i\in \N_0$, initially have slope~$0$.
        \item At the immigration time $T_i$ the slope of trajectory $H_i$ jumps from $0$ to $A_i$ if $B_i=1$ and stays~$0$ otherwise.
        \item Trajectories continue with constant slope until the next immigration time is reached or one of the trajectories reaches either 1 from below or 0 from above.
        \item Whenever a trajectory   at some time $t$ reaches height $0$ from above, its slope is instantly set to~$0$, and this trajectory then stays at height $0$ forever. 
        \item  {[{\em Kinking rule.}]} Whenever at some time $t$ a trajectory $H_j$ reaches height $1$ from below, then the  slopes of all trajectories whose height is in $(0,1]$ at time $t$ are simultaneously  reduced by $$v^\ast := \max\{V_i(t-) \mid i \in \N_0 \mbox { such that } H_i(t) = 1\},$$
         i.e. for all $H_i \in \mathscr H$ with $H_i(t) > 0$
         \begin{eqnarray} \label{newslope}
          V_i(t) = V_i(t-) - v^\ast .
         \end{eqnarray}  
           \end{itemize}
          \end{defn}
          In the light of the Moran model the intuition for the ``transmittal of kinks''~\eqref{newslope} to all other trajectories that currently have positive height is as follows: As soon as a trajectory reaches height 1, say with slope $v^*$, then, due to the logarithmic scaling, its slope drops instantly to 0 and this trajectory  corresponds to the ``new macroscopic'' subpopulation, so that the mean fitness of the population also makes an upward jump of size $v^*$. All the other contemporary subpopulations experience a decrease of their relative fitness with respect to the dominant subpopulation, and hence the slopes of the corresponding trajectories are reduced by~$v^*$.
          
           The dynamics specified in Definition~\ref{PITdyn} is illustrated by Figure~\ref{fig-limitingexample}.
             Obviously the above stated rules of the interactive dynamics allow to construct  $(H_i)$ from the random input
\begin{equation}\label{Psi}
  \Psi
    := ((T_i, A_i \cdot B_i))_{i \in \mathbb N}.
\end{equation}
By the Poisson colouring theorem, $\Psi$ represents a Poisson process on $\R_+^2$; the intensity measures of its restriction to $\R_+\times (0,\infty)$ is $\lambda^*\d t\,\gamma^*(\d a)$, $t\ge 0$, $a > 0$,  where $\lambda^*$ and $\gamma^*$ are defined in the following remark.
\begin{remark}[Discarding the trajectories of initial slope $0$]\label{remark-discarding0slopes} 
    Let $\mathscr H$ be a PIT with random input~$\Psi$. The trajectories $H_i$ for which $B_i=0$ remain at height $0$ forever and thus will never be contending for reaching height 1.  We define the sequence  $T^*_1<T^*_2<\cdots$ of immigration times of trajectories 
    with initially positive slopes by    $\{T^*_1, T^*_2, \ldots\} = \{T_i\mid B_i=1\}$.
    This thinning reduces the immigration rate $\lambda$ to
  \begin{equation}\label{lambdatilde}
\lambda^*:=\lambda\int\frac {a}{1+a}\gamma(\d a),
 \end{equation}
 and the random variables $(A^*_1, A^*_2, \ldots)$ that come along with the $T^*_j$'s have a biased distribution, being i.i.d.\ copies of a random variable $A^*$ with    $$\P(A^* \in \d a)=\frac\lambda{\lambda^*}\cdot\frac{a}{1+a}\gamma(\d a)=:\gamma^*(\d a).$$
We call the trajectory immigrating at time $T^*_j$ the \emph{$j$-th contending trajectory} (or simply \emph{$j$-th contender}). In view of the intuition coming from the Moran model (and the scaling limit result proved in Section~\ref{sec-THEtheorem}) we will address $T_i$ and $T^*_j$ also as the times of the $i$-th mutation and the $j$-th contending mutation, respectively.
\end{remark}

The next lemma  states properties of the PIT which also play a role in the subsequent definition.
\begin{lemma} \label{ITunique} With probability 1 for any time $t \ge 0, $ 
\begin{itemize}
    \item[(a)] there is no pair $i\neq j$ with $(H_i(t), V_i(t))=(H_j(t), V_j(t))$, 
    \item[(b)] there is exactly one $i\in \N_0$ with $(H_i(t), V_i(t)) = (1,0)$,
    \item[(c)] there is no more than one trajectory reaching height 1 at time $t$.
\end{itemize}
\end{lemma}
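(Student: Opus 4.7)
Property (a) as stated is violated by any pair of trajectories permanently at $(0,0)$ (those not yet immigrated, or those with $B_i=0$), so I read it as restricted to \emph{non-trivial} pairs, i.e.\ pairs $(i,j)$ for which at least one of $(H_i(t),V_i(t)),(H_j(t),V_j(t))$ differs from $(0,0)$. Under this convention, I would prove (c) first, derive (b) by tracing the kinking rule, and obtain (a) as a by-product of the structural observation used for (c). The essential probabilistic ingredient throughout is that, conditionally on the marks $(A_k,B_k)_{k\in\N}$, the immigration times $(T_k)_{k\ge1}$ have a strictly positive joint density on $\{0<t_1<t_2<\cdots\}$; everything else is deterministic algebra on piecewise-linear trajectories.

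\textbf{Proof of (c) by induction on the immigration index $m$.} Since the dynamics between immigrations is deterministic, any hypothetical simultaneous reach of height~$1$ must occur in some random interval $(T_m,T_{m+1}]$. For each pair $i<j\le m$, let $\sigma^{(m)}_k$ be the time at which $H_k$ would next reach height~$1$ in the deterministic continuation from $T_m^+$ without further immigrations. These are $\mathcal F_m$-measurable and, for each fixed realisation of the marks, piecewise-affine functions of $(T_1,\ldots,T_m)$; the simultaneous-reach event in $(T_m,T_{m+1}]$ is $\{\sigma^{(m)}_i=\sigma^{(m)}_j\le T_{m+1}\}$. Since $(T_1,\ldots,T_m)$ has an absolutely continuous conditional density on $\{0<t_1<\cdots<t_m\}$, a Fubini argument reduces the claim to showing that on every combinatorial cell (indexed by the order and identity of the kinking and extinction events triggered after $T_m^+$), the piecewise-affine function $\sigma^{(m)}_i-\sigma^{(m)}_j$ is not identically zero.

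\textbf{Non-degeneracy (main obstacle) and items (b), (a).} In the simplest scenario---where between $T_m$ and $\sigma_i\wedge\sigma_j$ the only kink comes from $H_m$ itself reaching height~$1$ with slope $v^*>0$---a direct perturbation calculation shows that shifting $T_m$ by $\delta$ shifts $\sigma_k^{(m)}$ by $-\delta v^*/(V_k-v^*)$ for $k\in\{i,j\}$, where $V_k=V_k(T_m)$; these shifts agree if and only if $V_i=V_j$ in the relevant time interval. The following structural observation resolves this case: every kink reduces the slopes of all currently-active trajectories by the same~$v^*$, so $V_i(t)-V_j(t)$ is constant on the time interval where both $H_i$ and $H_j$ are active. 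If this constant vanishes and $T_i<T_j$, then $H_i(t)-H_j(t)\equiv H_i(T_j)>0$, so $H_i$ reaches height~$1$ strictly before $H_j$, contradicting $\sigma^{(m)}_i=\sigma^{(m)}_j$. Cascaded-kink scenarios are handled by the same idea: the coefficient of $T_m$ in $\sigma^{(m)}_i-\sigma^{(m)}_j$ is a rational expression in the active slopes whose zeros correspond exactly to constancy-of-$V_i-V_j$ configurations ruled out by the observation. Granted (c), the kinking rule at every reach-$1$ event applies unambiguously; since $H_0(0)=1$, $V_0(0)=0$ initially and only finitely many kinks occur on any bounded interval (a.s.), (b) follows by a straightforward induction on the kinks. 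Finally, the structural observation gives (a) immediately: two distinct non-trivial trajectories with $(H_i,V_i)=(H_j,V_j)$ at some time would have $V_i=V_j$ there, whence by the same reasoning $H_i\ne H_j$, a contradiction.
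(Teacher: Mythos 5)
Your overall argument is correct, but it inverts the paper's logical flow and in doing so makes the easy parts load-bearing. You rightly noted that (a) must implicitly exclude the trivial coincidence at $(0,0)$, and the paper's proof does the same by assuming $H_i(t)>0$. More importantly, your ``structural observation'' --- the kinking rule updates all active slopes by the same $v^*$, so $V_i-V_j$ is constant on the common active interval and hence $H_i-H_j$ is affine with slope $V_i-V_j$ --- is precisely the mechanism behind the paper's proof of (a). The paper traces a hypothetical coincidence $(H_i(t),V_i(t))=(H_j(t),V_j(t))$ backward to force $T_i=T_j$, which has probability zero; you trace forward from $T_j$ to get $H_i(t)-H_j(t)=H_i(T_j)>0$ when $V_i\equiv V_j$. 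These are the same argument, just run in opposite directions.

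The genuine difference is the decomposition. The paper proves (a) directly, gets (b) as a corollary (uniqueness from (a), existence from Definition~\ref{PITdyn}), and deliberately only sketches (c), which is never used in the sequel. You instead make (c) the foundational step via a Fubini-plus-nondegeneracy argument on the joint density of $(T_1,\dots,T_m)$, then derive (b) by an induction over kinks. This is more work for the same yield: since your proof of (a) via the structural observation never actually invokes (c), you could have gone straight to (a), after which (b) is a one-liner as in the paper. The one soft spot in your route is the nondegeneracy claim for (c) in cascaded-kink configurations: the simplest-scenario calculation $\partial\sigma_k^{(m)}/\partial T_m=-v^*/(V_k-v^*)$ and its reduction to the contradicted case $V_i=V_j$ are fine, but ``the coefficient of $T_m$ ... is a rational expression whose zeros correspond exactly to constancy-of-$V_i-V_j$ configurations'' is asserted rather than derived, and it is not obvious that several nested kinks cannot conspire to cancel the coefficient in some other way. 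Since the paper itself declines to prove (c), that informality is comparable on both sides; but had you followed the paper's decomposition, you would not have had to rely on (c) at all.
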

\begin{proof}
    (a) Assume there exists a time $t> 0$ and $i\neq j$ for which $(H_i(t), V_i(t)) = (H_j(t), V_j(t)))$ with $H_i(t) > 0$. 
    Then, from the kinking rule, which asserts that at every change of slope the update is the same for
every trait, one derives that $H_i$ and $H_j$ are equal in the past, and in particular one can trace back the
two trajectories up to the time $T := T_i = T_j$. The latter equality, however, occurs with probability $0$.\\
(b) While the uniqueness assertion is a direct consequence of part (a), the fact that there is {\em at least} one $i\in \N_0$ with $(H_i(t), V_i(t)) = (1,0)$ is immediate from Definition~\ref{PITdyn}.
\\ (c) This can be shown by induction along the increasing sequence of times at which some trajectory reaches height 1. (Since we will not make use of assertion (c) in the sequel, we content ourselves with this hint.)
\end{proof}
\begin{defn}[Resident type, fitness of types, resident fitness, resident change times]\label{defrchPIT}\ \\ Let   $\mathscr H=(H_i)_{i \in \N_0}$ be a $\mathrm{PIT}(\lambda,\gamma)$. With probability 1 the  following objects are well-defined:

\begin{itemize}
       \item  For $t\ge 0$, with $i$ as in Lemma~\ref{ITunique} (b), we call $\rho(t)=i$ 
the {\em resident type} at time $t$.
\item The {\em fitness of type $i$} (relative to type $0$) is defined recursively as
\begin{equation}\label{defMi}
M_0 := 0, \quad M_i := M_{\rho(T_i)} + A_i, \quad i =1,2,\ldots 
\end{equation}
\item The {\em resident fitness} at time $t$ (relative to type $0$) is defined as
\begin{equation}\label{finc}
    F(t) := M_{\rho(t)}.   \end{equation}
     \item The times  at which one of the trajectories $H_i$, $i \geq 0$, reaches height~$1$ from below will be called the {\em resident change times},
     denoted $R_1<R_2<\ldots$ 
\end{itemize}
\end{defn}

\begin{figure}[!ht]
\begin{tikzpicture}
  \draw[scale=2.9, black, ->] (0.9, 0) -- (5.4, 0) node[right] {};
  \draw[scale=2.9, black, ->] (0.9, -0.1) -- (0.9, 1.2) node[above] {};
  \draw[scale=2.9, black] (0.87,1) node[left] {$1$};
  \draw[scale=2.9, black] (0.87,0) node[left] {$0$};

    \draw[scale=2.9, red] (2.65,-0.1) node[below] {$R_1$};
    \draw[scale=2.9, dotted, black, thick] (2.6, 0) -- (2.6, 1);
    \draw[scale=2.9, blue] (3.4,-0.1)  node[below] {$R_2$};
    \draw[scale=2.9, dotted, black, thick] (3.4, 0) -- (3.4, 1);
    \draw[scale=2.9, orange] (4.533,-0.1) node[below] {$R_3$};
    \draw[scale=2.9, dotted, black, thick] (4.533, 0) -- (4.533, 1);
 
    \draw[scale=2.9,gray] (1.4,-0.1) node[below] {$T_2$};
    \draw[scale=2.9, black!20!pink] (2.9,-0.1) node[below] {$T_5$};

    \draw[scale=2.9, black] (1.75,1.02) node[above] {$0$};
    \draw[scale=2.9, domain=0.9:2.6, smooth, variable=\x, black, thick] plot ({\x}, {1});
    \draw[scale=2.9, domain=2.6:3.4, smooth, variable=\x, black, thick] plot ({\x}, {1-1*(\x-2.6)});
    \draw[scale=2.9, domain=3.4:3.5, smooth, variable=\x, black, thick] plot ({\x}, {0.2-2*(\x-3.4)});

    \draw[scale=2.9, black!40!green] (1.2,-0.1) node[below] {$T_1$};
    \draw[scale=2.9, black!40!green] (1.3,0.06)  node[above] {$A_1=0.2$};
    \draw[scale=2.9, domain=1.2:2.6, smooth, variable=\x, black!40!green, thick] plot ({\x}, {0.2*(\x-1.2)});
    \draw[scale=2.9, domain=2.6:2.95, smooth, variable=\x, black!40!green, thick] plot ({\x}, {0.28-0.8*(\x-2.6)});
 
    \draw[scale=2.9, red] (3.0,1.02) node[above] {$1$};
    \draw[scale=2.9, red] (1.6,-0.1) node[below] {$T_3$};
    \draw[scale=2.9, red] (1.9,0.45)   node[above] {$A_3=1$};
    \draw[scale=2.9, domain=1.6:2.6, smooth, variable=\x, red, thick] plot ({\x}, {\x-1.6});
    \draw[scale=2.9, domain=2.6:3.4, smooth, variable=\x, red, thick] plot ({\x}, {1});
    \draw[scale=2.9, domain=3.4:4.4, smooth, variable=\x, red, thick] plot ({\x}, {1-1*(\x-3.4)});

    \draw[scale=2.9, blue] (3.966,1.02)    node[above] {$2$};
    \draw[scale=2.9, blue] (2.325,0.0) node[above] {$A_4=2$};
    \draw[scale=2.9, blue] (2.45,-0.1)  node[below] {$T_4$};
    \draw[scale=2.9, domain=2.5:2.6, variable=\x, blue, thick] plot ({\x}, {2*(\x-2.5)});
    \draw[scale=2.9, domain=2.6:3.4, variable=\x, blue, thick] plot ({\x}, {0.2+1*(\x-2.6)});
    \draw[scale=2.9, domain=3.4:4.533, smooth, variable=\x,   blue, thick] plot ({\x}, {1});
    \draw[scale=2.9, domain=4.533:5.2, smooth, variable=\x, blue, thick] plot ({\x}, {1-0.6*(\x-4.533)});
    \draw[scale=2.9, domain=5.2:5.4, smooth, variable=\x, blue, thick, dashed] plot ({\x}, {1-0.6*(\x-4.533)});

    \draw[scale=2.9, orange] (4.966,1.02)  node[above] {$2.6$};
    \draw[scale=2.9, orange] (3.05,0.08) node[above] {$A_6=1.6$};
    \draw[scale=2.9, orange] (3.2,-0.1)  node[below] {$T_6$};
    \draw[scale=2.9, domain=3.2:3.4, smooth, variable=\x, orange, thick] plot ({\x},{1.6*(\x-3.2)}); 
    \draw[scale=2.9, domain=3.4:4.533,smooth, variable=\x, orange, thick] plot ({\x},{0.32+0.6*(\x-3.4)}); 
    \draw[scale=2.9, domain=4.533:5.2, smooth, variable=\x, orange, thick] plot ({\x}, {1});
    \draw[scale=2.9, domain=5.2:5.4, smooth, variable=\x, orange, thick, dashed] plot ({\x}, {1});
\end{tikzpicture}

\caption{
This is an example of a realisation of a PIT with $(T_1, \ldots, T_6) = (1.2,1.4,1.6,2.5,2.9,3.2)$.
The Bernoulli variables $B_1$, \ldots, $B_6$ have realisations $1,0,1,1,0,1$ and the initial slopes of $H_1,H_3,H_4,H_6$ are $(A_1,A_3, A_4, A_6) =(0.2, 1, 2, 1.6)$. Among $H_1, H_2, \dots$, the trajectory $H_3$ is the first one to reach height~$1$. At the time $R_1$ at which this happens, the slope of $H_0$ jumps from $0$ to $-1$, the slope of $H_1$ jumps from $0.2$ to $-0.8$ and the slope of $H_4$ jumps from $1.5$ to $0.5$.  The numbers in the top line of the figure are the current values of the resident fitness $F(t)$. In particular,
$F(R_2)
  = 2$, and
$F(R_3)
  = 2.6.$
}\label{fig-limitingexample} 
\end{figure}

The following lemma (whose proof will be given in Section~\ref{sec-prooflem}) connects the jumps of the resident fitness with the jumps of the slopes of those trajectories whose height is positive at the corresponding (resident change) time.
\begin{lemma}\label{lemFM}a) The resident fitness has the representation
\begin{equation}\label{represF}
  F(t) = \sum_{j: R_j \le t} V_{\rho(R_j)}(R_j-). 
\end{equation}
b) For all $i \in \N_0$ and all $t<t'$ for which $T_i \le t$ and $H_i(t') > 0$, 
\begin{equation}\label{Vjumpsum}
V_i(t')-V_i(t) = F(t)-F(t').
\end{equation}
\end{lemma}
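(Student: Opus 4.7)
The plan is to establish (a) by induction on the sequence of resident change times and then derive (b) as a short telescoping consequence. The key structural observation is that both $F$ and each slope $V_i$ (for $t>T_i$, while $H_i>0$) are piecewise constant, with jumps occurring only at resident change times $R_j$. It therefore suffices to identify the jump of $F$ at $R_j$ with the slope of the incoming resident just before $R_j$. Writing $i_j:=\rho(R_j)$ and adopting the conventions $R_0:=0$, $F(R_0):=0$, the core claim I would prove inductively is
\[
F(R_j)-F(R_{j-1}) \;=\; V_{i_j}(R_j-),\qquad j\geq 1,
\]
from which (a) follows by summing and by extending via constancy between consecutive resident change times.

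\textbf{Induction step for (a).} Set $k_j:=\max\{k\geq 0:R_k\leq T_{i_j}\}$. Since $\rho$ is constant on $[R_{k_j},R_{k_j+1})$ with value $i_{k_j}$, the recursive definition $M_{i_j}=M_{\rho(T_{i_j})}+A_{i_j}$ yields $F(R_j)=F(R_{k_j})+A_{i_j}$. On the slope side, Definition~\ref{PITdyn} tells me that $V_{i_j}$ is born with value $A_{i_j}$ at time $T_{i_j}$ (note that $B_{i_j}=1$ is forced, as otherwise $H_{i_j}$ would remain at $0$ and never reach residency) and is subsequently reduced by $v^\ast$ at each resident change in $(T_{i_j},R_j)=\{R_{k_j+1},\ldots,R_{j-1}\}$. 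The crucial identification is that $v^\ast=V_{i_k}(R_k-)$ at every such $R_k$: the two trajectories at height $1$ at time $R_k$ are the incoming resident $H_{i_k}$ and the outgoing one, and the latter has slope $0$ at $R_k-$ because it was reset to $0$ by the kinking event at $R_{k-1}$; Lemma~\ref{ITunique}(b)(c) makes this configuration unambiguous. The inductive hypothesis then converts each $V_{i_k}(R_k-)$ into $F(R_k)-F(R_{k-1})$, and summing gives
\[
V_{i_j}(R_j-) \;=\; A_{i_j}-\bigl(F(R_{j-1})-F(R_{k_j})\bigr) \;=\; F(R_j)-F(R_{j-1}),
\]
which closes the induction.

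\textbf{Deduction of (b) and main obstacle.} For (b) I would fix $i$ and $t<t'$ with $T_i\leq t$ and $H_i(t')>0$. Since the rule that a trajectory staying at $0$ once it reaches $0$ from above forces $H_i>0$ throughout $(T_i,t']$, the kinking rule applies to $H_i$ at every resident change in $(t,t']$, and $V_i$ is constant between these events; hence $V_i(t')-V_i(t)=-\sum_{R_j\in(t,t']} V_{\rho(R_j)}(R_j-)$, and part (a) converts the right-hand side into $-(F(t')-F(t))=F(t)-F(t')$. I expect the main obstacle to be the interlocking bookkeeping inside the induction for (a): one must cleanly match the kinking amplitude $v^\ast$ at each resident change with the slope of the incoming resident (exploiting that the outgoing resident's slope has been reset to $0$) and then marry this to the recursive fitness definition $M_i=M_{\rho(T_i)}+A_i$. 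Once this interlock is in place, part (b) is a one-line telescoping.
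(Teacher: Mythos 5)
Your proof is correct and follows essentially the same route as the paper's: an induction over resident change times that combines the fitness recursion $M_{i_j}=M_{\rho(T_{i_j})}+A_{i_j}$ with the identification of the kinking amplitude $v^\ast$ at $R_k$ with $V_{\rho(R_k)}(R_k-)$, followed by a telescoping argument for part (b). The only cosmetic difference is your use of $k_j$ versus the paper's index set $J_\ell^<=\{j:0<R_j<T_{\rho(R_\ell)}\}$; also note that the cleanest justification for the outgoing resident having slope $0$ at $R_k-$ is a direct appeal to Lemma~\ref{ITunique}(b) rather than tracing back to the kink at $R_{k-1}$.
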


Formula~\eqref{defMi} implicitly decrees that every new type arises through a mutation from the currently resident type. This suggests a ``genealogy of mutations'' whose prelimit (described in the  next subsection) is reminiscent of the ``tree of alleles'' analysed for multitype branching processes in Bertoin~\cite{bertoin2010limit} (albeit there in a neutral situation and in a different scaling regime).

\begin{defn}[Genealogy of mutations in the PIT]\label{genmut}
  For $i\in \N$ we call $\rho(T_i)$ (as specified in Definition~\ref{defrchPIT}) the {\em parent} of type $i$ in the PIT $\mathscr H$. In this case, we call type $i$ a \emph{child} of $\rho(T_i)$.
  This induces an (almost surely defined) random rooted tree $\mathscr G$ with vertex set $\N_0$, edge set $\{(\rho(T_i), i) \,| \, i\in \N\}$ and root~$0$, which we call the {\em ancestral tree of mutations} in the PIT $\mathscr H$. Type $j $ is called an \emph{ancestor} of type $i$ if there is a directed path from $j$ to $i$ in this tree. In that case, we also say that type $i$ is a~\emph{descendant} of type $j$. (See Figure~\ref{fig-genealogyexample} for an illustration.)
  \end{defn}

We see in Figure~\ref{fig-limitingexample} that a resident is not necessarily a descendant of the previous resident. Hence, just observing the sequence of residents provides an incomplete and thus false picture of population ancestry. In order to describe the genealogy of mutants, one needs a finer description involving mesoscopic types, which is one of the main reasons for introducing the PIT.

\begin{figure}[!ht]
\begin{tikzpicture}[bn/.style={circle,fill,draw,minimum size=6mm},every node/.append style={bn}, scale=0.8]

  \node[text=white] (n0) at (0,1.5) {$0$};

  \node[color=black!40!green, text=white] (n1) at (2.5,0)   {$1$};
  \node[color=gray, text=white] (n2) at (2.5,1)   {$2$};
  \node[color=red, text=white] (n3) at (2.5,2)   {$3$};
  \node[color=blue, text=white] (n4) at (2.5,3)   {$4$};

  \node[color=black!20!pink, text=white] (n5) at (5,1.2)   {$5$};
  \node[color=orange, text=white] (n6) at (5,2.8)   {$6$};

  \draw[->,thick] (n0) -- (n1);
  \draw[->,thick,dotted] (n0) -- (n2);
  \draw[->,very thick] (n0) -- (n3);
  \draw[->,thick] (n0) -- (n4);

  \draw[->,thick,dotted] (n3) -- (n5);
  \draw[->,very thick] (n3) -- (n6);

\end{tikzpicture}
\caption{
  Illustration of the genealogy corresponding to Figure~\ref{fig-limitingexample}.
  Dotted edges mark mutations lost to genetic drift, i.e.~leading to non-contenders. Types 1, 3, 4 and 6 are contenders. Type 1 never becomes resident. Type 4 becomes resident but afterwards is ``kinked to extinction'' without having any descendant. Type 3, after becoming resident in a non-solitary way, spawns  type 6 as its descendant, which then becomes resident in a solitary way.
  The bold arrows highlight mutations that will be present in the ancestral line of all future individuals.
  The depicted situation points to concepts that will be defined and discussed in Section~\ref{compfix}.
}\label{fig-genealogyexample} 
\end{figure}
\subsection{The PIT as a scaling limit}\label{sec-THEtheorem}
    Now, for our first main result, consider a family of Moran models with mutation and selection, and population size $N$, $N\geq1$. (Readers not familiar with these concepts may find a concise introduction to the Moran model in this context in Section~\ref{sec-model}.)
Genetic types are numbered by $i\in \N_0$ and are distinguished by a numerical pair $(M^N_i,T^N_i)$, where $M^N_i\in\R$ marks the \emph{fitness} and $T^N_i\geq 0$ the first time of arrival of type $i$ to the population, assuming $T^N_i<T^N_{i+1}$ for all $i\geq0$.
    New types $j$ arise at times $T^N_j$ of a Poisson process of rate $\lambda/\log N$, $\lambda>0$, whereby a single randomly chosen individual, say of type $i$, mutates and its fitness increases in an iid fashion, i.e.\ $M^N_j:=M^N_i+A_j$, where $(A_i)_{i\geq1}$ are iid and follow the distribution $\gamma$. For $i>0$ we denote by $\mathfrak p_N(i) $ the type of the individual which mutated into type $i$. Like in Definition~\ref{genmut} this induces a random rooted tree $G^N$ with vertex set $\N_0$, edge set $\{(\mathfrak p_N(i), i) \,| \, i\in \N\}$ and root~$0$. Further, denoting by $X^N_i(t)$ the number of type $i$ individuals present at time $t$, let an individual of type $j$ replace an individual of type $i$ by its (identical) offspring at rate $(1+(M^N_j-M^N_i)^+)X^N_j(t)X^N_i(t)$, i.e.\ neutral reproduction happens at rate $1$, while a higher fitness value confers a linear bonus to the reproduction rate. Initially, we assume a homogeneous population, i.e. $X^N_0(0)=N, T^N_0=0$ and $M^N_0=0$. We call
    $$
      H^N_i(t)
        := \frac{\log^+(X^N_i(t\log N))}{\log N}\,, \quad t\ge 0,\, i \in \N, \qquad\text{where }
        \log^+(x) := \log(1+x), 
    $$
    the {\em logarithmic frequencies} of the types (also known as stochastic exponent or stochastic Hopf--Cole transform). Putting $H^N:= (H_i^N)_{i\in \N_0}$, the sequence of random paths $H^N=(H^N_i)$ takes values in $\mathcal D^{\N_0}$, where
\begin{equation}\label{defD}
  \cD
   := \cD(\R_+,[0,1])
\end{equation}
is the space of c\`adl\`ag functions from $\R_+$ to $[0,1]$, equipped with a metric that induces the Skorokhod $J_1$-convergence on all bounded time intervals, see \cite{ethier2009markov} Sec.~3.5. Writing
    $$\overline F^N(t)=\frac1N\sum_{i\geq0}X^N_i(t\log N)M^N_i$$
    for the 
    average population fitness in the prelimiting Moran model, we can now state our first main result.
  \begin{theorem}\label{theorem-THE}
    For all $i=1,2,\ldots$, as $N\to \infty$, 
    \begin{align} \label{convHNdist}
       & H^N \xrightarrow{\,\,d\,\,} \mathscr H \, \, 
             \mbox { as random elements of the product space }\mathcal D^{\N_0}, \\[.5em]
     & (M_1^N,\ldots, M_i^N) \xrightarrow{\,\,d\,\,} (M_1,\ldots, M_i) \quad \mbox{ in } \, \R^i,    \label{convMNdist}
     \\[.5em]
     &\overline F^N\xrightarrow{d}F
    \quad \mbox{ in } \mathcal D(\R_+, \R_+) \mbox{ with respect to the Skorokhod } M_2\mbox{-topology,}
    \label{convFN}
     \\[.5em] 
     & G^N|_{\{0,\ldots, i\}} \xrightarrow{\,\,d\,\,} \mathscr G|_{\{0,\ldots, i\}}, \label{convGN}
    \end{align}
    where the set of trees with vertex set $\{0,1,\ldots,i\}$ that are rooted in $0$ is endowed with the discrete topology.
  \end{theorem}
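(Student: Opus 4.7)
The plan is to fix a finite time horizon $[0,T]$ (recovering the full statement by letting $T\to\infty$, noting that all four convergences are indexed by bounded-time quantities) and to decouple the randomness into two layers: (a) a convergent \emph{random input} consisting of rescaled mutation times, fitness increments and survival indicators; and (b) a conditional dynamics of the log-frequencies that, given the input, becomes asymptotically deterministic and matches the interactive dynamics of Definition~\ref{PITdyn}. Since the rescaled mutation rate is $\lambda$, only $O_{\P}(1)$ many mutations occur per unit of rescaled time, so one can proceed by a finite induction over the mutation events and the (almost surely finitely many) resident-change events in $[0,T]$.

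For the random input, I would show that $\Psi^N := ((T^N_i/\log N,\, A_i B^N_i))_i \xrightarrow{d} \Psi$, where $B^N_i \in \{0,1\}$ is the indicator that the $i$-th mutant family ever reaches macroscopic size, say frequency $\varepsilon$ for some small fixed $\varepsilon>0$. The convergence of the rescaled Poisson process $(T^N_i/\log N)_i$ to $(T_i)_i$ of rate $\lambda$ is standard, and the $A_i$ are already iid $\gamma$ in the prelimit. To identify each $B^N_i$ with an independent Bernoulli$(A_i/(1+A_i))$ variable I would use a standard branching-process coupling: while the type-$i$ family is of size $o(N)$ inside a resident whose fitness disadvantage relative to type $i$ tends to zero, the type-$i$ size process can be dominated above and below by two continuous-time birth--death branching processes with birth rate $1+A_i+o(1)$ and death rate $1$, whose survival probabilities both converge to $A_i/(1+A_i)$, as in the classical computation of \cite{athreya1972branching}.

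The core step is to prove that, conditional on $\Psi^N \to \Psi$, the log-frequency vector $H^N$ converges to $\mathscr H$ in $\mathcal D^{\N_0}$. Here the deterministic interactive dynamics of Section~\ref{sec-IT} should serve as the natural scaffold. For each contending family one then distinguishes three regimes: (i) a supercritical sub-macroscopic growth phase, where a branching-process approximation yields linear growth in $\log N$-scale with slope equal to the instantaneous fitness advantage over the resident; (ii) a macroscopic phase, where, since all involved frequencies stay bounded away from $0$ and $1$, a generator/martingale argument shows that the frequency vector is $O(1/\sqrt N)$-close to the solution of a replicator-type ODE whose $\log/\log N$-transform is precisely piecewise linear and matches the PIT slopes; and (iii) a sub-macroscopic declining phase, handled by a subcritical branching approximation. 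The kinking rule~\eqref{newslope} will emerge because a fixing family reaches frequency $1-o(1)$ in $o(\log N)$ generations, i.e.\ in $o(1)$ rescaled units, during which the frame of reference for relative fitness shifts by the fixing family's slope $v^\ast$.

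The convergences \eqref{convMNdist}--\eqref{convGN} should then follow from \eqref{convHNdist} together with the already-established convergence of the mutation times. Indeed, the recursion $M^N_j = M^N_{\mathfrak p_N(j)} + A_j$ combined with the fact that (by the core step) the parent $\mathfrak p_N(j)$ is with high probability the unique resident $\rho(T_j)$ at the $j$-th mutation time yields \eqref{convMNdist} by induction on $j$, and \eqref{convGN} by discreteness of the space of finite rooted labelled trees. The $M_2$-convergence \eqref{convFN} of $\overline F^N$ to the piecewise constant, non-decreasing $F$ reflects precisely that during the $o(1)$-long fixation transitions the prelimit mean fitness passes monotonically through the heights of the limiting upward jumps, which is the canonical scenario in which $M_2$-convergence holds without $J_1$-convergence. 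The hardest part, I expect, will be step~(ii) above, namely the uniform-in-time LLN-type control of all $O(1)$-many simultaneously macroscopic families, together with the patching to the branching-process regimes at the boundary frequencies $\varepsilon$ and $1-\varepsilon$; this is where a careful separation of timescales (slow macroscopic ODE vs.\ fast sub-macroscopic exponential growth/decline and fast fixation) will be essential.
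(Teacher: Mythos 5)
Your high-level strategy is the right one and tracks the paper's own proof closely: decouple into a convergent random input and an asymptotically deterministic interactive dynamics, handle mesoscopic clones by branching-process comparison, shrink the macroscopic crossover to a point under the $\log N$ time scaling, and deduce \eqref{convMNdist}--\eqref{convGN} from the fact that with high probability the parent of each new mutant is the current resident. The paper does essentially this, phrased as Lemmas~\ref{lem:multitype-without-kinks} and~\ref{lem:multitype-sweep}, an induction on mutation and resident-change events in Proposition~\ref{fixedk}, a quenched statement (Proposition~\ref{convprob}) conditioned only on $(T_i,A_i)_i$, and an annealed step (Proposition~\ref{prop-updateM}) passing to the Bernoulli limits.

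There is, however, a genuine gap in your definition of the survival indicators, and it would break your very first step. You define $B^N_i$ as the event that the $i$-th family \emph{ever reaches frequency $\varepsilon$}, i.e.\ roughly that the trajectory reaches height $1$. But the PIT's $B_i$ is the event that the mutant escapes initial drift, and a clone can escape drift (so $B_i=1$, positive initial slope $A_i$) yet be kinked negative and die out strictly before reaching height~$1$ (e.g.\ type~$1$ in Figure~\ref{fig-limitingexample}). For such an index your $B^N_i=0$ w.h.p., so $\P(B^N_i=1)$ converges to something strictly less than $A_i/(1+A_i)$ and your claimed convergence $\Psi^N\to\Psi$ fails; the branching-process coupling you invoke cannot fix this because whether the family ever reaches $\varepsilon N$ is not decided while the branching approximation is valid, but rather by the full clonal-interference dynamics. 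Concretely, your construction would set the limiting trajectory of every non-fixing contender to be identically $0$, so \eqref{convHNdist} would be false for those coordinates, and the survival probabilities entering the limit law would also be wrong. The fix is exactly what the paper does: make $B^N_i$ \emph{local}, e.g.\ $B^N_i=\1_{\{\mathscr X^N_i(T_i+t^N)\ge \log N\}}$ with $t^N=1/\sqrt{\log N}$ (Eq.~\eqref{defBN}), which is decided at a mesoscopic size after a short rescaled time and is asymptotically independent of the later interference. A secondary (non-fatal) remark: your regime (ii) proposes an ODE/LLN analysis of the jointly-macroscopic phase and asserts its log-transform is ``precisely piecewise linear''; it is not, and the paper sidesteps this entirely by showing via a time-change (Lemma~\ref{lem:multitype-sweep}) that the whole crossover takes $o(1)$ rescaled time, so that no ODE analysis is needed.
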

The proof of Theorem~\ref{theorem-THE} will be carried out in Section~\ref{sec-convergenceproof}, starting with a short outline in Section~\ref{sec-outline}. 
From the proof of Theorem~\ref{theorem-THE} it is apparent that for any fixed $i$, the convergences~\eqref{convHNdist}--\eqref{convGN} occur jointly in distribution (and not only separately for the four prelimiting objects).
In~\eqref{convFN} we use the $M_2$-topology (which is the weakest of the four Skorokhod topologies) because 
the average fitness at times of a resident change can take any value between the former and the new resident fitness.
In~\eqref{convGN}, the restrictions of $G^N$ and $\mathcal G$ to $\{0,\ldots, i\}$ are trees rooted in $0$  because for all $1 \leq j \leq i$  the parent of $j$ (i.e. $\mathfrak p_N(j)$ resp. $\rho(T_j)$)  is an element of $\{0,\ldots, j-1\}$ by construction of $G^N$ and $\mathcal G$.

We expect that the proof of Theorem~\ref{theorem-THE} extends with minor modifications to the case when the birth times $(T_i)$ do not form a Poisson process but any renewal process whose inter-arrival times have a continuous distribution.

\subsection{Speed of adaptation in the PIT}\label{sec-speedresult} 

For $\lambda > 0$ and $\gamma \in \mathcal M_1((0,\infty))$ let $F(t)$ denote the resident fitness at time $t \geq 0$ in the PIT$(\lambda,\gamma)$ (see Definition~\ref{defrchPIT}). In case the limit of $\frac1tF(t)$, as $t\to\infty$, exists in $[0,\infty]$, we call this the \emph{speed of increase of the resident fitness}, or briefly the \emph{speed of adaptation}. All results stated in this subsection will be proved in Section~\ref{sec-speedproof}.
\begin{theorem}\label{theorem-speed}
\begin{enumerate}[(i)]
\item If $\int_{0}^{\infty} a \gamma(\d a)<\infty$, then, as $t\to\infty$, $\frac1tF(t)$ converges almost surely to a constant~\mbox{$\overline v\in(0,\infty)$}.
\item If $\int_{0}^{\infty} a \gamma(\d a) = \infty$, then $\frac1tF(t)\to\infty$ almost surely.
\end{enumerate}
\end{theorem}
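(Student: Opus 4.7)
My approach is to set up a regenerative structure for the PIT and then apply a renewal--reward law of large numbers. Viewing the PIT as a strong Markov process on the state space of configurations of currently alive trajectories together with the resident's identity, the natural regenerative state is the \emph{clean state} $\mathfrak s_0$ in which the resident is the unique alive trajectory, sitting at height~$1$ with slope~$0$; by Definition~\ref{PITdyn} the PIT starts in~$\mathfrak s_0$. Let $0 = \sigma_0 < \sigma_1 < \sigma_2 < \cdots$ be the successive return times to~$\mathfrak s_0$ (up to relabeling of the resident's index). The strong Markov property makes the cycle data $(\xi_n, \Delta F_n) := (\sigma_n - \sigma_{n-1},\, F(\sigma_n) - F(\sigma_{n-1}))$, $n \ge 1$, an i.i.d.\ sequence. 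To obtain $\E[\sigma_1] < \infty$ I would establish a uniform minorization: there exist $p, L > 0$ such that from any reachable configuration the PIT enters $\mathfrak s_0$ within time~$L$ with probability at least~$p$. The event I would use is that the next contender has $A^* > K$ (for $K$ sufficiently large) and no further Poisson point arrives for an additional window of length of order $1/K$; by the kinking rule~\eqref{newslope} the big contender then reaches height~$1$ first and simultaneously drives every other alive trajectory to extinction. Iterating gives geometric tails for $\sigma_1$.

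\textbf{Part (i).} Since each type becomes resident at most once and its slope only decreases via kinking, its contribution to $F$ is at most its birth slope~$A_i$; hence $F(\sigma_1) \le \sum_{i : T_i \le \sigma_1,\, B_i = 1} A_i$. Wald's identity then yields
$$
\E[F(\sigma_1)] \;\le\; \lambda^*\, \E[\sigma_1]\, \E[A^*] \;=\; \lambda\, \E[\sigma_1] \int \frac{a^2}{1+a}\,\gamma(\d a),
$$
which is finite iff $\int a\,\gamma(\d a) < \infty$. Positivity $\E[F(\sigma_1)] > 0$ follows from the positive-probability event that the first contender in the cycle produces a solitary resident change. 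Thus $\E[\Delta F_1] \in (0,\infty)$ and $\E[\xi_1] \in (0,\infty)$, and the classical LLN for i.i.d.\ sums gives $F(\sigma_n)/\sigma_n \to \overline v := \E[F(\sigma_1)] / \E[\sigma_1]$ a.s. Monotonicity of~$F$, the sandwich $F(\sigma_n)/\sigma_{n+1} \le F(t)/t \le F(\sigma_{n+1})/\sigma_n$ for $t \in [\sigma_n, \sigma_{n+1}]$, and $\sigma_{n+1}/\sigma_n \to 1$ a.s.\ upgrade the convergence to continuous time.

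\textbf{Part (ii) and main obstacle.} The regeneration structure is unchanged, so $\E[\sigma_1] < \infty$ still holds. For the reward, on the solitary-fixation event above $F(\sigma_1) \ge A^*$, and this event has conditional probability $e^{-2\lambda^*/A^*}$ given~$A^*$. Hence $\E[F(\sigma_1)] \ge \E[A^*\, e^{-2\lambda^*/A^*}]$, which is infinite iff $\int a\,\gamma^*(\d a) = \infty$, equivalently iff $\int a\,\gamma(\d a) = \infty$. The strong law for i.i.d.\ nonnegative summands with infinite mean then gives $F(\sigma_n)/n \to \infty$, hence $F(\sigma_n)/\sigma_n \to \infty$ a.s., and the same monotone sandwich yields $F(t)/t \to \infty$ a.s. The hard part throughout is the uniform minorization underlying $\E[\sigma_1] < \infty$: the PIT's state space is infinite-dimensional and a configuration can contain many alive trajectories with heights arbitrarily close to~$1$ and slopes as large as their birth slopes~$A_i$, so a single threshold~$K$ does not automatically dominate every reachable state. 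I would handle this either by first waiting for the currently pending fixations to resolve before ``injecting'' the big contender, or by invoking a branching-process upper bound on the number of alive trajectories over bounded time windows.
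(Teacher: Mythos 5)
Your overall architecture is the paper's: regenerate at the ``clean'' states (the paper's solitary resident change times $L_n$, at which the process $\Pi(t)=\sum_{i\in I(t)}\delta_{(H_i(t),V_i(t))}$ returns to $\delta_{(1,0)}$), apply a renewal--reward LLN, bound the reward from above via $F(L_1)\le\sum_{T_i\le L_1}A_iB_i$ together with a Wald-type identity (the paper instead uses the strong law for $\sum_{T_i<t}A_iB_i$, but this is equivalent), and obtain the infinite-mean case by a lower bound on a positive-probability solitary-fixation event. The sandwiching step to pass from $F(L_n)/L_n$ to $F(t)/t$ is likewise identical.

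The genuine gap is in the one lemma that carries the whole argument: that the regeneration time has finite expectation (Lemma~\ref{clusterlength} in the paper, where even exponential moments are proved). Your proposed event --- ``next contender has $A^*>K$, then a quiet window of length $\Theta(1/K)$'' --- does not give a configuration-uniform minorization, for exactly the reason you flag yourself: a trajectory already alive at the contender's birth time $T_i$ with height close to $1$ and current slope exceeding $A^*$ can reach $1$ first, and no fixed threshold $K$ rules this out. Neither of your proposed patches closes this. ``Waiting for pending fixations to resolve'' needs a quantitative bound on how long they take, but the waiting time is governed by the very slopes one is trying to control; and a branching-process bound on the \emph{number} of alive trajectories says nothing about their \emph{slopes}, which is the actual obstruction.

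The paper closes the gap with a structural observation you do not make. Because $t\mapsto V_k(t)$ is non-increasing and becomes nonpositive at the moment $H_k$ first reaches $1$, any trajectory $k$ with $H_k(T_i)>0$ that was born at a time $T_k<T_i-2/A_i$ must satisfy $V_k(T_i)<A_i/2$: otherwise it would have grown with slope $\ge A_i/2$ for a duration $>2/A_i$ and exceeded height $1$. Consequently the paper asks for a Poisson configuration in which some contending birth $T_i$ (with $A_iB_i>0$) has no other contending birth in the symmetric window $[T_i-2/A_i,\,T_i+2/A_i]$: the pre-window excludes recently born fast trajectories, the observation bounds the slopes of all older ones by $A_i/2$, and a short computation via Lemma~\ref{lemFM} shows trajectory~$i$ keeps slope $\ge A_i/2$ until it becomes resident, at which point everything else is kinked to nonpositive slope, making the change solitary. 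Such a window occurs within a geometric number of disjoint blocks of the Poisson process (regardless of the ambient configuration, precisely because of the slope observation), which yields $\E[e^{\alpha L_1}]<\infty$. Without this observation, the minorization is not uniform and the finiteness of $\E[L_1]$ remains unproved.

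Two minor points. Your ``clean state'' (resident the unique alive trajectory) is slightly stricter than the paper's bottleneck $\delta_{(1,0)}$: at a solitary resident change other trajectories may still have positive height, just nonpositive slope; the paper discards these as they can never again become resident. Taken literally, your $\sigma_1$ may be strictly larger than $L_1$ and its finiteness would need a separate argument, so you should work with $\Pi$ rather than with the raw configuration. Also, $\E[F(\sigma_1)]>0$ holds deterministically since $F$ is nondecreasing and $F(R_1)=V_{\rho(R_1)}(R_1-)>0$, so no extra event is needed there.
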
 
The proof of Theorem~\ref{theorem-speed} will rely on a renewal structure of the process
$$\Pi(t) := \sum_{i\in I(t)} \delta_{(H_i(t), V_i(t))}, \quad t \ge 0,$$
where $I(t) := \{i \in \N_0:H_i(t) > 0, V_i(t) \ge 0\}$
is the index set of those trajectories that have positive height and nonnegative slope at time $t$. Thanks to the PIT dynamics, $(\Pi(t))_{t\ge 0}$ is a Markov process (taking its values in the finite counting measures on $(0,1] \times [0,\infty)$), and 
those resident change times (recall  Definition~\ref{defrchPIT}) for which  $\Pi(t)$ enters its ``bottleneck state'' $\delta_{(1,0)}$ turn out to be renewal times. These times can also be described as follows:
\begin{defn} \label{srch}We say that a resident change time $R$ is {\em solitary} if $V_i(R)\le 0$ for all $i=1,2,\ldots$, and denote the sequence of solitary resident change times by $L_1 < L_2 < \ldots$.
\end{defn}
 Thus, the solitary resident change times are those ones among the resident change times at which all trajectories of positive height have nonpositive slope. As an example, note that $L_1= R_3$ in  Figure~\ref{fig-limitingexample}.

The process $(F(L_n))_{n\in \N_0}$ thus turns out to have i.i.d. increments, which makes LLN and CLT results for renewal reward processes available. Indeed, the speed of adaptation $\overline v$ will be expressed in Proposition~\ref{renrewprop} as the ratio $\frac {\E[F(L_1)]}{\E[L_1]}$.
This is similar in spirit to the quantity that was found by H.\ Guess as the asymptotic fitness increase in a Wright-Fisher model with multiplicative fitness (\cite[Theorems 4 and 5]{guess1974limit}). We conjecture that an analogue of Guess' result also holds for the Moran model specified in Section~\ref{sec-THEtheorem}, leading to a speed of adaptation $\overline v_N$ in the $N$-th prelimiting model. The task to prove this as well as (criteria for) the convergence $\overline v_N \to \overline v$ is left to future research.

If $\gamma$ is in the normal domain of attraction of an $\alpha$-stable law $\nu$ with $0<\alpha < 1$ (and thus has infinite first moment), then the renewal argument in the proof of Theorem~\ref{theorem-speed} suggests the conjecture that $t^{-1/\alpha} F(t)$ converges in distribution as $t \to \infty$. If this conjecture is true, then an interesting question (again connected with clonal interference) is  how far this limit differs from $\nu$. 

The next proposition is in the spirit of the \emph{thinning heuristics} introduced in~\cite[Section~3.1]{BGPW19}.
\begin{prop}\label{prop-1case}
In the case of deterministic and constant fitness increments, i.e.\ if $\gamma$ is the point mass $\delta_{c}$ in  some $c>0$, we have a.s.\
\begin{equation}\label{fixedc}
  \lim_{t\to\infty} \frac{F(t)}{t}
      = \frac{\lambda c^2}{1+c+\lambda}. \end{equation}
\end{prop}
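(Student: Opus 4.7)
The plan is to exploit a renewal structure specific to the deterministic case $\gamma = \delta_c$. The key observation is that every contender is born with slope exactly $c$, and in fact every slope in the PIT always lies in the lattice $c\mathbb Z$. A straightforward induction along the sequence of PIT events (arrivals, hits of $1$ from below, hits of $0$ from above) gives this: starting slopes are $0$ or $c$, and each kinking event reduces the slopes of all positive-height trajectories by the common value $v^\ast$, which by induction lies in $c\mathbb Z$. Since a trajectory reaching height $1$ from below must have strictly positive slope, the only admissible value is $v^\ast = V_{\rho(R_j)}(R_j-) = c$ at every resident change $R_j$. In particular, by Lemma~\ref{lemFM}(a), $F(R_j) = jc$ for every $j \geq 1$.

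Next I would observe that immediately after any resident change $R_j$, no trajectory has strictly positive slope: before the kink, slopes of positive-height trajectories are in $c\mathbb Z$ with those still ascending being equal to $c$, so after subtracting $v^\ast = c$ all of them are $\leq 0$. Hence every resident change is already solitary, $L_j = R_j$, and the only way to trigger the next resident change is for a new contender to arrive. By the memoryless property of the rate-$\lambda^\ast$ Poisson process of contending arrivals (with $\lambda^\ast = \lambda c/(1+c)$ from~\eqref{lambdatilde}), the waiting time $S$ until the next contender is $\mathrm{Exp}(\lambda^\ast)$, independent of the past. This contender ascends at slope $c$ and cannot be overtaken by any older trajectory (slopes $\leq 0$) nor by a later-arriving contender (slope $c$ but starting later), so it hits $1$ after the deterministic time $1/c$. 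Thus the inter-change times $R_{j+1}-R_j = S + 1/c$ are i.i.d.\ across $j$ by the strong Markov property of the PIT.

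Applying the strong law of large numbers for renewal processes, with $N_R(t)$ denoting the number of resident changes in $[0,t]$, gives
$$
  \frac{F(t)}{t} \;=\; c\cdot\frac{N_R(t)}{t} \;\xrightarrow[t\to\infty]{\text{a.s.}}\; \frac{c}{\E[R_1]} \;=\; \frac{c}{1/\lambda^\ast + 1/c} \;=\; \frac{c^2\lambda^\ast}{c+\lambda^\ast}.
$$
Substituting $\lambda^\ast = \lambda c/(1+c)$ yields the claimed value $\lambda c^2/(1+c+\lambda)$.

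The main obstacle is rigorously establishing the lattice claim of the first paragraph, since one must carefully handle the possibility of multiple trajectories coexisting at positive height with various previously accumulated kink amounts, and verify that the kinking rule always produces the same decrement $v^\ast = c$ in this deterministic regime. Once this structural fact is in hand, the renewal argument and the final computation are essentially routine.
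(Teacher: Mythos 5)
Your proof is correct and rests on the same renewal observation as the paper's: in the constant-increment case every resident change is solitary, the reward per cycle is $c$, and the cycle length is $\mathrm{Exp}(\lambda^*)+1/c$, giving $\overline v = c/(1/\lambda^*+1/c)$. The paper gets there more economically: it notes that before the first resident change $R_1$ there are simply no kinks, so every contender alive at time $R_1-$ (in particular the one reaching height~$1$) still has its birth slope $c$, whence $R_1=L_1$, and then it computes $\E[L_1]=1/\lambda^*+1/c$, $F(L_1)=c$, and invokes Proposition~\ref{renrewprop}~a); later cycles are handled automatically by the regeneration structure of Remark~\ref{FalongL}, with no need for a global structural claim. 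The lattice observation you flag as the ``main obstacle'' is in fact true and not hard to close -- from Lemma~\ref{lemFM}~b) one has $V_i(t)=M_i-F(t)\in c\Z$ whenever $H_i(t)>0$, and since slopes are born at $c$ and only decrease, any positive slope equals $c$, so $v^\ast=c$ at every $R_j$ -- but it proves more than is needed (solitariness of \emph{all} resident changes, rather than just the first plus restarting), and your direct appeal to the renewal SLLN simply re-derives what Proposition~\ref{renrewprop} already packages. Both routes are valid and give the same formula.
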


For $\lambda \to \infty$ the r.h.s.\ of~\eqref{fixedc} converges to the finite value $c^2$, reflecting the fact that a high mutation rate leads to a strong effect of clonal interference, i.e.\ despite being advantageous and surviving the random genetic drift, most mutations are lost by clonal interference.
The next proposition has a similar spirit; roughly spoken it states that, at least for fitness increment distributions $\gamma$ with bounded support and with high mutation rates, the fitness increment  over a fixed time interval is dictated by the mutations whose fitness increment is ``essentially maximal''. 
\begin{prop}\label{highmut}
  Let the support of $\gamma$ be bounded, with $b$ denoting its supremum. For $\lambda > 0$ let $F_\lambda$ be the resident fitness in the {\rm PIT}$(\lambda, \gamma)$.  Then, for any $t>0$
  $$
    F_\lambda(t)
      \xrightarrow{\lambda\to\infty} b(\lceil bt\rceil - 1)
      \qquad\text{in probability.}
  $$
\end{prop}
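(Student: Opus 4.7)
The plan is to establish a deterministic upper bound that matches a probabilistic lower bound. For the \textbf{upper bound} I claim $F_\lambda(t)\leq b(\lceil bt\rceil-1)$ almost surely for every $\lambda$. Consider the ancestral chain of $\rho(t)$ in the mutation tree $\mathscr G$ (Definition~\ref{genmut}): $i_0=\rho(t)$, $i_1=\rho(T_{i_0})$, \ldots, $i_D=0$. Since $A_{i_k}\leq b$ for each $k$, $F_\lambda(t)=M_{\rho(t)}=\sum_{k=0}^{D-1}A_{i_k}\leq bD$. For each $k$, trajectory $H_{i_{k+1}}$ reaches height $1$ at some resident change time $R_{j_k}\leq T_{i_k}$ starting from $0$ at $T_{i_{k+1}}$, and since all slopes in the PIT are bounded by $b$ (the initial slopes $A_i\leq b$ can only decrease under kinks), we get $R_{j_k}-T_{i_{k+1}}\geq 1/b$. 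Combined with the a.s.\ strict inequality $T_{i_k}>R_{j_k}$ (the Poisson arrival time $T_{i_k}$ almost surely avoids the event-time $R_{j_k}$), this yields $T_{i_k}>T_{i_{k+1}}+1/b$ a.s. Telescoping, and using $T_{i_0}<t$ a.s., one obtains $D<bt$, hence $D\leq\lceil bt\rceil-1$ and $F_\lambda(t)\leq b(\lceil bt\rceil-1)$.

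For the \textbf{lower bound in probability}, set $K:=\lceil bt\rceil-1$, so $K/b<t$, and call a resident change \emph{fresh} if the winning trajectory was born after the previous resident change; denote by $\tau_1^f<\tau_2^f<\cdots$ the fresh resident change times and by $i_k^{f*}$ the corresponding fresh winners. I aim to show that with probability $\to 1$ as $\lambda\to\infty$, (i) $\tau_K^f\leq t$ and (ii) $A_{i_k^{f*}}\geq b-\eta$ for all $k\leq K$. On this event, the fitness between consecutive fresh wins increases by at least $b-\eta$: the parent of $i_k^{f*}$ (the resident at $T_{i_k^{f*}}$) has fitness $\geq M_{i_{k-1}^{f*}}$ by monotonicity of $F$, so $M_{i_k^{f*}}\geq M_{i_{k-1}^{f*}}+A_{i_k^{f*}}\geq M_{i_{k-1}^{f*}}+(b-\eta)$. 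Iterating and invoking monotonicity again gives $F_\lambda(t)\geq F_\lambda(\tau_K^f)=M_{i_K^{f*}}\geq K(b-\eta)$. Letting first $\lambda\to\infty$ and then $\eta\to 0$ combines with the upper bound to conclude.

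Verification of (i) and (ii) proceeds inductively on $k$. For (ii), the fresh winner in round $k$ is, up to $O(\eta)$ corrections induced by small pre-existing kinks, the minimiser of $T^*+1/A$ over the Poisson point process of fresh contenders from the current resident born after $\tau_{k-1}^f$; since $\lambda^*\to\infty$ and $\gamma^*$ carries mass arbitrarily close to $b$, the standard Poisson extreme value analysis shows that this minimiser has $A$ concentrated near $b$. For (i), any pre-existing trajectory having endured cumulative kinks of size $\geq b-\eta$ has slope $\leq\eta$; hence the total slope budget of potentially interfering pre-existing near-winners in any single round is $O(\eta)$, and the resulting delay and slope reduction of the greedy fresh contender are also $O(\eta)$, giving $\tau_k^f\leq\tau_{k-1}^f+1/b+O(\eta)$ and thus $\tau_K^f\leq K/b+O(K\eta)<t$ for small~$\eta$. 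The \textbf{main obstacle} is precisely this quantitative control of interference: one must rule out that accumulating many small pre-existing wins across successive rounds can either significantly delay a fresh win or, more dangerously, dampen the slope of the greedy fresh contender all the way to extinction before it reaches height~$1$.
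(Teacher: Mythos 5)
Your upper bound argument is a genuine alternative to the paper's, which instead compares $F_\lambda$ with a deterministic ``best case'' system $\mathbb H(((1,0),(0,b)),\, ((\tfrac ib,b))_{i\ge1})$ whose resident fitness is $b\lfloor bt\rfloor$, and then passes to a left-continuous version. Your ancestral-chain telescoping is a clean, more explicit route. However, there is a small off-by-one slip: the telescoping $T_{i_k}>T_{i_{k+1}}+1/b$ holds for $k\le D-2$ (for $k=D-1$, $H_{i_D}=H_0$ starts at height $1$, not $0$), so combined with $T_{i_{D-1}}>0$ you get $T_{i_0}>(D-1)/b$; together with $T_{i_0}<t$ this only gives $D<bt+1$, hence $D\le\lceil bt\rceil$, which is off by one from what you claim. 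The fix is to note that the \emph{same} slope-bound argument applied to $H_{i_0}$ gives $t\ge T_{i_0}+1/b$ (the trajectory $H_{i_0}$ must climb from $0$ at $T_{i_0}$ to $1$ at a resident change time $\le t$, with slope $\le b$), so that $t-1/b\ge T_{i_0}>(D-1)/b$, hence $D<bt$ and $D\le\lceil bt\rceil-1$ as desired.

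For the lower bound your strategy is the same as the paper's in spirit (exhibit a chain of contenders with slope close to $b$ born shortly after the previous win), and the obstacle you single out — that accumulated small kinks from pre-existing trajectories could delay, or even kill, the greedy fresh contender — is indeed the crux; you have not closed it, so the proof as written is incomplete. The paper works with \emph{deterministic} time windows of length $\varepsilon+\tfrac1{b-\varepsilon}$ rather than the random fresh-resident-change times; the event that a contender of slope $\ge b-\varepsilon$ is born in the first $\varepsilon$ of each window is then a product of i.i.d. Poisson events, which gives a clean probability estimate $\P(E_\lambda^c)^{\lfloor(\varepsilon+1/(b-\varepsilon))^{-1}t\rfloor}$. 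The mechanism controlling interference in the paper is that the first winner's slope is $\ge(\varepsilon+\tfrac1{b-\varepsilon})^{-1}$, so that after the kink all pre-existing trajectories have slope $O(\varepsilon)$; moreover these trajectories were born while the previous resident was still in place, so their fitness levels $M_j=F(T_j)+A_j$ are capped by the previous resident's fitness plus $b$, and hence by Lemma~\ref{lemFM}(b) the cumulative kink they can inflict on the fresh contender is bounded by $b$ minus the previous winner's slope, which is again $O(\varepsilon)$. I would suggest you reread the paper's iteration with this in mind and try to make that step precise — it is the idea your argument is missing.
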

An application of a functional central limit theorem for renewal reward processes (discussed in Appendix~\ref{s:RenRew}) yields the following functional central limit theorem for the population fitness $F(t)$ in case of finite variance of $\gamma$.

\begin{theorem}\label{theorem-speedCLT}
If $\int_0^{\infty} a^2 \gamma(\d a)<\infty$, then there exists $\sigma > 0$ such that
\begin{equation}\label{FCLT}
\Bigg(\frac{F(nt)-\overline v n t}{\sigma \sqrt{n}}\Bigg)_{t \geq 0} \overset{d}{\longrightarrow} W \quad\mbox{ as } n\to \infty, 
\end{equation}
in the space of c\`adl\`ag functions from $[0,\infty)$ to $\R$ with respect to the Skorokhod $J_1$-topology, where $\overline v$ is as in Theorem~\ref{theorem-speed}(i) and $W = (W_t)_{t \geq 0}$ is a standard Brownian motion.
\end{theorem}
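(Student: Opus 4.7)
\textbf{Proof plan for Theorem~\ref{theorem-speedCLT}.}
The plan is to apply the functional CLT for renewal reward processes (collected in Appendix~\ref{s:RenRew}) to the i.i.d.\ sequence of pairs $\bigl(L_n-L_{n-1},\,F(L_n)-F(L_{n-1})\bigr)_{n\ge 1}$ (with $L_0:=0$) generated by the solitary resident change times of Definition~\ref{srch}. The renewal structure used in the proof of Theorem~\ref{theorem-speed}, together with Proposition~\ref{renrewprop}, already supplies the i.i.d.\ property and the identity $\overline v = \E[F(L_1)]/\E[L_1]$, so the CLT will deliver
\[
\sigma^{2} \,=\, \frac{\operatorname{Var}\!\bigl(F(L_{1})-\overline v\,L_{1}\bigr)}{\E[L_{1}]}.
\]

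The first step I would carry out is to upgrade the moment control on the renewal cycle: under the assumption $\int_{0}^{\infty}a^{2}\,\gamma(\d a)<\infty$ I would show that both $\E[L_{1}^{2}]<\infty$ and $\E[F(L_{1})^{2}]<\infty$. Given $\E[L_{1}^{2}]<\infty$, the bound $F(L_{1})\le \sum_{j:\,T_{j}\le L_{1}}A_{j}$ combined with a Wald-type identity and $\int a^{2}\,\gamma(\d a)<\infty$ delivers the second moment of $F(L_{1})$; so the real task is $\E[L_{1}^{2}]<\infty$. For this, I would either construct a Lyapunov function for the Markov process $\Pi(\cdot)$ that charges states far from the bottleneck $\delta_{(1,0)}$, or couple $L_{1}$ with a dominating renewal variable whose geometric tail comes from the positive probability that no new contender with sufficiently large slope arrives before the existing contenders, whose slopes are driven strictly negative by the kinking rule, return to height~$0$. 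Either route should yield at least exponential tails for $L_{1}$.

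With the moment bounds in place, the standard renewal-reward FCLT then gives
\[
\Bigl(\tfrac{F(L_{N(nt)})-\overline v\,nt}{\sigma\sqrt{n}}\Bigr)_{t\ge 0} \,\xrightarrow{\,d\,}\, (W_{t})_{t\ge 0},\qquad N(t):=\max\{n:L_{n}\le t\},
\]
in the $J_{1}$-topology. To transfer this from $F(L_{N(nt)})$ to $F(nt)$, I would invoke the monotonicity of $F$ (the representation in Lemma~\ref{lemFM}(a) displays $F$ as a sum of strictly positive jumps), giving, for any $T>0$ and every $t\in[0,T]$,
\[
0 \,\le\, F(nt)-F(L_{N(nt)}) \,\le\, \max_{1\le k\le N(nT)+1}\bigl(F(L_{k})-F(L_{k-1})\bigr).
\]
Standard maximal inequalities applied to i.i.d.\ random variables with finite second moments show the right-hand side is $o(\sqrt{n})$ in probability, so the difference is uniformly negligible on compact intervals and \eqref{FCLT} follows. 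Finally, $\sigma>0$ because $F(L_{1})-\overline v\,L_{1}$ cannot be a.s.\ constant: the continuity of the underlying Poisson arrival times precludes any deterministic linear relation between $L_{1}$ and $F(L_{1})$.

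\textbf{Main obstacle.} The genuinely delicate step is the second-moment estimate $\E[L_{1}^{2}]<\infty$. The state space of the driving process $\Pi$ is noncompact (arbitrarily many contenders of arbitrarily large slope may coexist), so positive recurrence of the bottleneck state has to be quantified by a Lyapunov or coupling argument rather than a direct finite-state computation. Once this is secured, the remaining steps are soft and follow the usual renewal-reward template.
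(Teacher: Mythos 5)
Your overall plan---apply the renewal-reward FCLT of Appendix~\ref{s:RenRew} along the solitary resident change times, then transfer from the cycle-wise reward process to $F$---is the same as the paper's, and your comparison step (sandwiching $F$ between $\widehat F(t)=F(L_{N(t)})$ and $F(L_{N(t)+1})$, then using a union bound and $\E[F(L_1)^2]<\infty$ to show the gap is $o(\sqrt n)$ uniformly on compacts) matches the paper's almost verbatim. However, you misplace the difficulty and leave a genuine gap where the second-moment assumption actually enters.

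First, $\E[L_1^2]<\infty$ is not an obstacle at all. Lemma~\ref{clusterlength} already proves $\E[e^{\alpha L_1}]<\infty$ for some $\alpha>0$, with no moment assumption on $\gamma$ whatsoever, so all polynomial moments of the inter-renewal time are free. Your proposed geometric-tail coupling for $L_1$ is essentially a re-derivation of that lemma; no Lyapunov function is needed, and the noncompactness of the state space of $\Pi$ is harmless because the renewal bound there is obtained from disjoint Poisson windows, not from a drift estimate on $\Pi$.

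Second, the place where $\int a^2\,\gamma(\d a)<\infty$ genuinely bites is $\E[F(L_1)^2]<\infty$, and your one-line appeal to a ``Wald-type identity'' skips the key step. The bound $F(L_1)\le\sum_{j:\,T_j\le L_1}A_jB_j$ is correct, but the number of summands is a stopping time for the filtration generated by the \emph{marked} point process $(T_j,A_j,B_j)$, which depends on the marks $(A_j)$ themselves; the independence hypotheses of the second-moment Wald identity do not hold directly, and the dependence is exactly what makes the claim nontrivial. The paper's proof avoids this by reusing the geometric variable $K$ and the disjoint windows $C_n,D_n$ from the proof of Lemma~\ref{clusterlength}: writing $F(L_1)\le\sum_{n=0}^K X_n$ with $X_n$ the total reward born in the $n$-th window, the crucial observations are that $X_n$ is independent of the window events $E_k$ for $k\neq n$, that conditioning on $E_n^c$ only reweights the Poisson point counts in $C_n,D_n$ (not the mark law), and that $\E[K^2]<\infty$ from the geometric tail. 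These yield $\E[X_n^2\mid n<K]$ bounded uniformly in $n$, hence $\E[F(L_1)^2]<\infty$. Your Wald argument could likely be patched along these lines, but as written it is precisely where the proof would stall.

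The positivity of $\sigma$ is handled in the paper by simply observing that $F(L_1)-\overline v\,L_1$ is not a.s.\ constant; your continuity-of-the-Poisson-arrivals reasoning is a fine justification for that observation.
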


The standard deviation $\sigma$ figuring in \eqref{FCLT}  will be expressed in \eqref{defsigma} with the help of the renewal structure addressed after~Theorem~\ref{theorem-speed}. Readers who are interested mainly in the proof of the results stated in Section~\ref{sec-speedresult} may proceed directly to Section~\ref{sec-speedproof}.

\section{A system of interacting trajectories and its Moran prelimit}\label{newsec4}
This section prepares for the  proof   of the scaling limit result Theorem~\ref{theorem-THE}, which will be given in Section~\ref{sec-convergenceproof}.  While  Section~\ref{sec-IT} extends the PIT dynamics from stochastic to deterministic inputs, Section~\ref{sec-model} gives a concise presentation of the Moran prelimit.
\subsection{A system of interacting trajectories}\label{sec-IT}
\ 
In the inductive concatenation arguments in Sections~\ref{sec:newmut} and~\ref{sec:completion} we will work with deterministic inputs for the PIT dynamics, including more general initial configurations than just one trajectory at height 1 and with slope 0.   This will be prepared in what follows.

Recalling  the space $\mathcal C_{\rm PL}$ from Section~\ref{PIT} and the definition of $v_h$ from~\eqref{defvh}, we observe that for all $h \in \mathcal C_{\rm PL}$ and $t\ge 0$ the pair $(h(t), v_h(t))$ belongs to
$$\mathcal S:= \big(\{0\}\times [0,\infty)\big) \cup \big((0,1)\times \R\big) \cup  \big(\{1\}\times (-\infty, 0]\big).$$
Figure~\ref{fig-limitingexample} displays a few trajectories in $\mathcal C_{\rm PL}$.
Assume that for $k\in \N$ and $\overline \iota \in \N_0 \cup\{\infty\}$  we are given a configuration
\begin{equation}\label{defaleph}
  \aleph:= ((\eta_i,c_i))_{-k < i\le 0}\in {\mathcal S}^{\{-k+1,\ldots,0\}}  
\end{equation}
with $(\eta_0,c_0) := (1,0)$ and  $(\eta_i,c_i)\neq (\eta_{i'},c_{i'})$ for $i\neq i'$, and a configuration
\begin{equation}\label{defbeth}
   \beth:= ((t_i,c_i))_{1\le i < \overline \iota}\in ([0,\infty)^2)^{\{i\, \mid 1\le i< \overline \iota\}}, 
\end{equation}  with $0<t_1<  t_2\cdots$ and $t_i\uparrow \infty$ as $i\to \infty$ in case  $\overline \iota = \infty$. 

We view $\aleph$ as a {\em starting configuration} specifying the height and slope of trajectory $h_i$, $-k < i\le 0$, at time $0$, and $\beth$ as an {\em immigration configuration}, specifying that the trajectory $h_i$, $1\le i < \overline \iota$, has height $0$ for $t \in [0,t_i]$ and right slope $c_i$ at its {\em immigration time}~$t_i$. The symbols $\aleph$ (aleph) and $\beth$ (beth) are reminiscent of ``being present at time 0'' and ``born later''. 
\begin{defn}[Interactive dynamics]\label{defdyn}
For a starting configuration  $\aleph$ as in \eqref{defaleph} and an immigration configuration $\beth$ as in \eqref{defbeth}, let
\begin{equation}\label{defsystH}
  \mathbb H  
    = \{h_i\mid -k < i < \overline \iota\}
    \in \big(\mathcal C_{\rm PL}\big)^{\aleph\, \cup\, \beth}
\end{equation}
result from the  deterministic interactive dynamics on $\mathcal S^{\{i |-k<i<\overline \iota\}}$ described in 
Definition~\ref{PITdyn},
with the only differences being that the index set of the trajectories, which previously was~$\N_0$, now is $\{i |-k<i<\overline \iota\}$, the starting configuration (of heights and slopes at time $0$), which previously was $((1,0))$, is now $\aleph$, and the immigration configuration, which previously was the random  $\Psi= (T_i, A_i B_i)_{i\in \N}$,  is now the (possible finite) deterministic configuration $\beth$.
 We call this $\mathbb H$ {\em the system of interacting trajectories initiated by $\aleph$ and $\beth$}, and denote it by $\mathbb H(\aleph, \beth)$.
\end{defn}
With regard to the kinking rule and in accordance with Definition~\ref{defrchPIT} we define the resident type at time $t$ as
\begin{equation}\label{defrestypeIT}
    \rho(t)
     :=\arg \max \{ v_{h_i}(t-) \colon -k < i < \overline \iota, h_i(t)=1 \}, \quad t> 0;  \quad \rho(0) := 0.
\end{equation}
As an analogue to the definition of the resident fitness in~\eqref{finc}, we put $t_i:=0$ for $-k< i \le 0$, and set
\begin{equation}\label{falternative}
        f(t)
         := c_{\rho(t)} + f(t_{\rho(t)}), \quad t \ge 0,
\end{equation}
for some arbitrarily prescribed value $f(0)\in\R$.

With the resident change times $r_1<r_2<\cdots$ specified as in Definition~\ref{defrchPIT} (now for $(h_i)$ in place of $(H_i)$), the resident fitness has a representation analogous to that in Lemma~\ref{lemFM}:
\begin{equation}\label{represf}
f(t)-f(0) = \sum_{j\ge 1 \colon r_j \le t} v_{\rho_j}(r_j-), \quad t\ge 0.
\end{equation}

\begin{remark}
The dynamics specified in  Definition~\ref{defdyn} and a view on~\eqref{represf} suggest to consider the following system of equations (where we set $\eta_i := 0$ for $i \ge 1$):
\begin{eqnarray}\label{evolh}
h_i(t) &=& \mathds 1_{\{t \geq t_i\}} \Big( \eta_i + \int_{t_i}^t  \big( c_i + f(t_i)-f(s) \big) \mathrm d s \Big)^+,\qquad -k<  i< \overline \iota,\\
\label{resfit}
f(t)&=& \max \{c_j+f(t_j) \mid -k< j < \overline \iota, \, h_j(t) =1\}.
\end{eqnarray}

Indeed, working in a piecewise manner (up to the next immigration or resident change time) one checks readily that (for any prescribed value $f(0)$)  the system (\eqref{evolh}, \eqref{resfit}) has a unique solution $(\mathbb H, f)$, with $\mathbb H= (h_i)_{-k< i < \overline \iota}$ following the dynamics specified in Definition~\ref{defdyn}, and $f$ being the resident fitness defined by~\eqref{defrestypeIT} and~\eqref{falternative}  (recall Figure~\ref{fig-limitingexample} for an illustration).
\end{remark}

    We call the population \emph{initially monomorphic} or \emph{initially homogeneous} when $k=1$ and hence $\aleph=((1,0))$.
    Finally, to relate back to the description of the PIT in Section~\ref{PIT}, recall the Poissonian sequence $\Psi$ given by~\eqref{Psi} and observe that $\mathscr H:=\mathbb H(((1,0)),\Psi)$ is the PIT with random input $\Psi$. In particular, the resident fitness $F$ specified in Definition~\ref{defrchPIT} arises as the $f$ from  \eqref{resfit} with the same random input.
    The construction in this section makes it feasible to define the PIT with general initial conditions and generalize Theorem~\ref{theorem-THE} in a corresponding way; we refrained from this in order to ease the presentation in Section~\ref{sec-modelresults}. However, note that in the proofs in Section~\ref{sec-convergenceproof} we will use restart arguments from non-monomorphic initial conditions, where the construction of the present section  will be instrumental.
\begin{remark}\label{remintuit}
The intuition behind Definition~\ref{defdyn} (which will be justified by the large-population limit of the Moran model defined in Section~\ref{sec-model}, see Theorem~\ref{theorem-THE}) is as follows:

Consider a population of size $N$ which at time $0$ consists of $k$ subfamilies, the one indexed with $i=0$ being  ``macroscopic'', i.e. having logarithmic frequency $\eta_0=1$ in the limit $N\to \infty$) and the others (indexed with $i=-1, ... ,-k+1$) being ``mesoscopic'', i.e. having logarithmic frequency $\eta_i \in (0,1)$ in the limit $N\to \infty$. (Note that the initial configuration in~\eqref{defaleph} is slightly more general: There, with a view towards the initial configurations in Lemma~\ref{lem:multitype-without-kinks} and the induction argument in Proposition~\ref{fixedk}, we also allow for additional macroscopic subpopulations which then are required to have negative  growth rate.)

The initially resident type has fitness $f(0)$, while  for $i< 0$ type $i$ has fitness $c_i + f(0)$.
Thus, as long as all the types with negative indices are mesoscopic, their sizes grow (in an appropriate timescale) exponentially with  growth rate $c_i$,  which is equivalent to saying that their ``logarithmic sizes'' (corresponding to $h_i$) grow linearly with slope $c_i$. At each time $t_i$ contained in the configuration~$\beth$, a~new type arises by a mutation on top of the currently resident type. The fitness increment of this new type is $c_i$,  which is thus also its relative fitness with respect to the resident type at time $t_i$. Assume the first of all types apart from the initial resident that reaches a macroscopic size is type $j$, and assume this happens (under an appropriate time rescaling) at time $r>0$. Then at this time the resident fitness jumps from $f(0)$ to $f(0)+c_{j}$, and all the other types whose growth is still ongoing find themselves in an environment in which competition is more difficult: e.g.\ while the relative fitness (w.r.t.\ the resident) of type ~$-1$ was $c_{-1}$ before time $r$, it jumps to $c_{-1}-c_{j}$ at time $r$, assuming its subpopulation has not been absorbed at $0$ yet, i.e.\ gone extinct.
This establishes the link between \eqref{falternative} and \eqref{resfit}, and explains the ``kinks'' of the trajectories that happen at resident change times, illustrating the last bullet point in Definition~\ref{defdyn}.  In this way, the notions ``relative fitness of type $i$ with respect to the currently resident type'' and ``current slope of the trajectory $i$'' become equivalent.
\end{remark}

\subsection{A Moran model with clonal interference}\label{sec-model}
The prelimiting model which will figure in Theorem~\ref{theorem-THE}  is a Moran model with population size $N$ and infinitely many types. We now define its type space and its Markovian dynamics on the type frequencies. At time $0$ finitely many  types (numbered 
by $-k+1, -k+2,\ldots, 0$) are present, and after time $0$ new types (numbered by $1,2,\ldots$ in the order of their appearance) arrive in the population via mutations at the jump times of a Poisson counting process $\mathcal I^N(t)$, $t\ge 0$,  with rate $\mu_N = \tfrac \lambda {\log N}$. For given numbers $f(0)$ and $c_i$, $-k< i <0$, as well as $c_0:=0$, the fitness levels of the  types $-k+1, \cdots, -1, 0$ that are present at time $t=0$ are defined as
$$m_i^N := c_i +f(0), \qquad -k< i \le 0.$$
For $t\ge 0$ and $i > -k$ we denote the number of type-$i$ individuals at time~$t$ by~$\mathcal X^N_i(t)$, and write  $\mathcal X^N(t) = (\mathcal X^N_i(t))_{i > -k}$. We specify the joint Markovian dynamics of the process $(\mathcal X^N, \mathcal M^N, \mathcal I^N)=(\mathcal X^N(t),  \mathcal M^N(t), \mathcal I^N(t))_{t\ge 0}$, where $\mathcal M^N(t) = (M^N_i)_{-k<i\le \mathcal I^N(t)}$ is the vector of fitness levels of the types that came into play up to time~$t$.

The state space of $(\mathcal X^N, \mathcal M^N, \mathcal I^N)$ is
$$E_N:= \bigcup_{\iota \in \N_0} \Big\{(x_{-k+1}, x_{-k+2}, \ldots, x_\iota, 0, 0, \ldots)\Big | x_i\in \N_0, \sum_{-k<i\le \iota}x_i=N\Big\} \times\R^{k+\iota}\times\{\iota\}.$$
Writing 
$$
  e_i
    = (e_{i,\ell})_{\ell > -k}
    = (0,0,\ldots, 0,1,0,0, \ldots)
$$
for the sequence that has 1 in component $i$ and $0$ in all other components, we can write the transition rates as follows:
\begin{itemize}
     \item \emph{Mutation}: For $(x,m, \iota)\in E_N$, for $-k<j\le \iota$ and $a\in \R_+$, the jump rate of the process  $(\mathcal X^N, \mathcal M^N, \mathcal I^N)$ from $(x,m, \iota)$ to 
       $(x-e_j+e_{\iota+1}, m+(m_j+a)e_{\iota+1}, \iota+1)$ is $\frac {\lambda}{\log N} \frac{x_j}N \gamma(\d a)$. 
 \\[-.5em] 
    \item \emph{Resampling}: For $(x,m, \iota)\in E_N$ and  for $-k<j,\ell\le \iota$, the jump rate of the process  $(\mathcal X^N, \mathcal M^N, \mathcal I^N)$ from $(x,m, \iota)$ to
          $(x+e_j-e_\ell,m, \iota)$ is $x_jx_\ell(1+(m_j-m_\ell)^+)\frac1{N}$.    
   \end{itemize}
In order to pass to a timescale in which one unit of time corresponds to $\log N$ generations, we define the process $(\mathscr X^N, \mathscr M^N, \mathscr I^N)$ by
\begin{equation}\label{xitoX}
 (\mathscr X^N(t), \mathscr M^N(t), \mathscr I^N(t)) := (\mathcal X^N(t\log N), \mathcal M^N(t\log N), \mathcal I^N(t\log N)).   
\end{equation}
The process $\mathscr I^N$ is thus for all $N$ a Poisson counting process with intensity $\lambda$. This allows to couple the sequence of processes $(\mathscr X^N, \mathscr M^N)$, $N\in \N$, via  ingredients which we encountered already in Section~\ref{PIT}:
\begin{itemize}
\item Let $(T_i)_{i\in\N}$ be the times of a Poisson process of rate $\lambda>0$. 
\item Let $(A_i)_{i\in\N}$ be an i.i.d.\ sequence of $\gamma$-distributed random variables, independent of $(T_i)_{i}$.
\end{itemize}
\begin{remark}\label{repres} a) Like $(\mathcal X^N,\mathcal M^N, \mathcal I^N)$, the 
process $(\mathscr X^N,\mathscr M^N, \mathscr I^N)$ defined by~\eqref{xitoX} is a Markovian jump process. Its dynamics  may be specified  using $(T_i)$ and $(A_i)$ as follows:
  \begin{itemize}
     \item \emph{Mutation}: At time $T_i$ the process  $(\mathscr X^N, \mathscr M^N, \mathscr I^N)$ jumps from state $(x,m, \iota)$ to state
          $(x-e_j+e_i, m+e_i\cdot(A_i+m_j), \iota +1)$ with probability
          $x_j/N$,\, $j>-k$.\\
    (Note that, when a mutation event occurs as above, necessarily $i = \iota+1$.)
 \\[-.5em] 
    \item \emph{Resampling}:
      The jump rate of the process $(\mathscr X^N, \mathscr M^N, \mathscr I^N)$ from state $(x,m,\iota)$ to state $(x+e_j-e_\ell,m,\iota)$\ \ is\ \ $x_jx_\ell(1+(m_j-m_\ell)^+)\frac{\log N}{N}$, \ $-k<j, \, \ell\le \iota$.
   \end{itemize}
   b) The just described dynamics on the type frequencies can also be obtained via a graphical representation with three types of transitions:
   \begin{enumerate}
       \item a \emph{mutation} occurs at each time $T_i$ on an individual that is randomly sampled from the population, resulting in the founder of a new type with fitness increment $A_i$ relative to its parent;
       \item \emph{neutral reproduction} occurs with rate proportional to $\log N/N$ for each ordered pair of individuals and leads to the first one reproducing (i.e., giving rise to another individual with the same type and fitness) and the second one dying; 
       \item \emph{selective reproduction} occurs for each pair of individuals with rate proportional to $\log N/N$ times their fitness difference, and leads to the fitter individual reproducing and the less fit one dying. 
   \end{enumerate}
\end{remark}

\begin{defn}\label{defGHN}
  \begin{enumerate}[a)]
    \item 
    Using the just described graphical representation we can trace back the individual ancestral lineages and in particular define a genealogy of mutations:
    For $i >0$ we say that $j < i$ is the {\em parent} of type $i$ if type $i$ originated via a mutation of a type $j$-individual. We then write $j=: \mathfrak p_N(i)$. 
    In the case $k=1$ (i.e.\ if all individuals at time $0$ carry the same type) this induces a random tree $G^N$ with vertex set~$\N_0$, edge set $\{(\mathfrak p_N(i), i) \,| \, i\in \N\}$ and root~$0$.
    \item 
    The {\em logarithmic frequency} (or briefly {\em the height}) {\em of the $i$-th mutant family at time $t$} is defined as
    \begin{align}
      H^N_i(t)
       &:= \frac{\log^+\big(\mathscr X^N_i(t)\big)}{\log N}.
    \label{eq:logplus}
    \end{align}
    The sequence of random paths $H = (H_i^N)$ takes values in $\mathcal D^\N$ equipped with the product topology, with the space $\mathcal D$ defined in~\eqref{defD}. 
  \end{enumerate}
\end{defn}

\newpage

\section{The PIT as a scaling limit: Proof of Theorem~\ref{theorem-THE}}\label{sec-convergenceproof}

\subsection{Outline of the proof}\label{sec-outline}

As the very basis for the proof, in Section~\ref{sec:bin-bran-pros} we state results on super- and subcritical
binary branching processes (Lemmas~\ref{lem:gw-supercritical} and ~\ref{lem:gw-subcritical} resp.), regarding
convergence to piecewise linear functions under logarithmic scaling.
In Section~\ref{sec:multitype-nomut},
these findings will first be transferred to the Moran model without mutations in the presence of finitely many 
mutant families of mesoscopic size, to show linear growth of these \emph{mesoscopes} between resident changes, cf.\ Lemma~\ref{lem:multitype-without-kinks}, using stochastic ordering via comparison of jump rates.
Similarly, using time-change and stochastic ordering for bundles of mesoscopes, 
Lemma~\ref{lem:multitype-sweep} zooms into the resident changes,
showing that their time span indeed vanishes on the logarithmic timescale, providing a kink in the PIT.
These evolutionary phases will then be pieced together by concatenating applications of 
Lemmas~\ref{lem:multitype-without-kinks} and~\ref{lem:multitype-sweep} along resident change times. In Sections~\ref{sec:newmut} and~\ref{sec:completion}, the proof of Theorem~\ref{theorem-THE} will be completed by an induction along the times of mutations. Here, an essential intermediate step is Proposition~\ref{convprob}, in which the logarithmic type frequencies of the $N$-th prelimiting system are coupled with a system of interacting trajectories as defined in Section~\ref{PIT}, but now with the Bernoulli random variables $B_i$ replaced by indicators $B^N_i$ which predict whether the $i$-th mutant becomes a contender in the prelimit.

Possible generalizations of this methodology will be discussed in Section~\ref{sec:THE-genar}.

\subsection{Auxiliary results from branching processes}\label{sec:bin-bran-pros}

The two lemmata in this subsection reflect the well-known fact that the logarithm of sped-up sub- or supercritical Galton--Watson processes scale to linear functions. In order to state them, 
let $Z=(Z_t)_{t \geq 0}$ be a continuous-time binary Galton--Watson process with individual birth and death rates $b, d \geq 0$ respectively, and let $s:=|b-d|$.
We denote by $T_0:=\inf \{t \geq 0 \mid Z_t = 0\}$ the extinction time, 
and by $\P_z$ the law of $Z$ started at $z \in \N_0$.

The next lemma follows from Theorem~\ref{thm:superbranch},
which gathers some useful facts in the supercritical case.

\begin{lemma}\label{lem:gw-supercritical}
Assume $b>d$, abbreviate $\{Z\not\to 0\} := \{Z_t \geq 1, \forall t \geq 0\}$ and let $z_N \in \N$, $N>1$.
Then
\begin{enumerate}
    \item 
    $\displaystyle
      \sup_{t \geq 0}\Big|\frac{\log^+(Z_{t\log N})}{\log N} - \Big(\frac{\log z_N}{\log N} + st \Big)\1_{\{Z\not \to 0\}}\Big| 
        \xrightarrow{N\to\infty} 0
    $
    in probability under $\P_{z_N}$;
    \item If $t^N \to \infty$ then $\P_{z_N}(T_0 \geq t^N) \sim \P_{z_N}(Z \not \to 0) = 1-(d/b)^{z_N}$ as $N\to\infty$.
\end{enumerate}
\end{lemma}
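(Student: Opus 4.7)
The plan is to derive both parts from standard properties of the supercritical binary Galton--Watson process (as collected in Theorem~\ref{thm:superbranch}), combined with the decomposition $Z_t = \sum_{i=1}^{z_N} Z^{(i)}_t$ into $z_N$ iid lineages each started at $1$, and with the Kesten--Stigum martingale $M_t := Z_t e^{-st}/z_N$. Since the offspring distribution is binary with a finite second moment, $M_t$ is an $L^2$-bounded nonnegative martingale with $M_0 = 1$, converging almost surely and in $L^2$ to a random variable $W_{z_N}$ with $\E W_{z_N} = 1$ and $\{W_{z_N} > 0\} = \{Z \not\to 0\}$ almost surely.

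For part~(2), independence of lineages yields
\[
  \P_{z_N}(T_0 \geq t^N) = 1 - \P_1(T_0 < t^N)^{z_N}
    \quad\text{and}\quad
  \P_{z_N}(Z \not\to 0) = 1 - (d/b)^{z_N}.
\]
Since $\P_1(T_0 < t^N) \uparrow d/b$ as $t^N \to \infty$, the mean-value bound
\[
  (d/b)^{z_N} - \P_1(T_0 < t^N)^{z_N}
    \le z_N \,(d/b)^{z_N - 1} \bigl(d/b - \P_1(T_0 < t^N)\bigr)
\]
shows that the difference between the numerator and the denominator of the ratio $\P_{z_N}(T_0 \geq t^N)/\P_{z_N}(Z\not\to 0)$ is negligible compared to the denominator in both the bounded and the diverging regime for~$z_N$, proving convergence of the ratio to~$1$.

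For part~(1), on the event $\{Z_{t\log N} \geq 1\}$ I decompose
\[
  \frac{\log^+(Z_{t\log N})}{\log N} - \frac{\log z_N}{\log N} - st
    = \frac{\log M_{t\log N}}{\log N} + O\!\left(\frac{1}{\log N}\right),
\]
so it suffices to show that $\sup_{t \geq 0} |\log M_{t\log N}|/\log N \to 0$ in probability on $\{Z \not\to 0\}$ and that $\sup_{t \geq 0} \log^+(Z_{t\log N})/\log N \to 0$ on $\{Z \to 0\}$. Doob's maximal inequality controls the upper tail via $\P(\sup_t M_t \ge N^\epsilon) \le N^{-\epsilon}$. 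For the matching lower bound I introduce $\tau_\eta := \inf\{t : M_t \le \eta\}$: on $\{\tau_\eta < \infty\}$, optional stopping (applicable because $M$ is uniformly integrable) gives $\E[W_{z_N} \mid \mathcal F_{\tau_\eta}] = M_{\tau_\eta} \le \eta$, whence Markov's inequality yields $\P(\inf_t M_t \le \eta) \le 2\,\P(W_{z_N} < 2\eta)$. Choosing $\eta = N^{-\epsilon}$, the right-hand side vanishes: when $z_N$ is bounded, $W_{z_N} > 0$ a.s.\ on $\{Z \not\to 0\}$; when $z_N \to \infty$, the weak law of large numbers applied to the iid random variables $W^{(i)}$ yields $W_{z_N} = z_N^{-1} \sum_{i=1}^{z_N} W^{(i)} \to 1$ in probability. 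On the extinction event the residual supremum is handled by tightness of $\sup_t Z_t$ when $z_N$ is bounded, and by the vanishing extinction probability $(d/b)^{z_N}$ when $z_N \to \infty$.

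The main obstacle is the uniform lower bound on $M_t$ on the survival event, because a nonnegative martingale converging to a positive limit can in principle dip arbitrarily close to zero in transient phases. The optional-stopping argument sketched above resolves this by linking such dips directly to smallness of the limit $W_{z_N}$, which is then excluded with high probability by either a pointwise positivity argument (for bounded $z_N$) or a law of large numbers (for $z_N \to \infty$).
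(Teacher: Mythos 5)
Your overall strategy — decomposing $Z$ into $z_N$ iid lineages and analysing the normalised Kesten--Stigum martingale $M_t = Z_te^{-st}/z_N$ — is close in spirit to how the paper proves this (the paper offloads everything to Theorem~\ref{thm:superbranch}, whose proof also works with the lineage decomposition and the martingale $Z_te^{-st}$). Your proof of part~(2) is correct, and the mean-value bound is a cleaner unified treatment than the paper's case split into $z_L\to z$ and $z_L\to\infty$. Doob's maximal inequality for the upper tail in part~(1) is also fine.

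The gap is in the lower-tail step of part~(1). Your optional-stopping/Markov argument yields $\P(\inf_t M_t\le\eta)\le 2\,\P(W_{z_N}<2\eta)$, and you then assert that the right-hand side vanishes when $\eta=N^{-\epsilon}$. This is false when $z_N$ stays bounded: as $\eta\downarrow0$ the right-hand side tends to $2\,\P(W_{z_N}=0)=2(d/b)^{z_N}>0$, not to $0$. The overshoot is real, not merely cosmetic — extinction already forces $\inf_t M_t=0$, so even after subtracting $\P(Z\to0)=(d/b)^{z_N}$ (the part of the bad event that the indicator $\1_{\{Z\not\to0\}}$ in the statement allows), the estimate leaves a residual $(d/b)^{z_N}>0$ and does not show $\P\big(\{\inf_t M_t\le N^{-\epsilon}\}\cap\{Z\not\to0\}\big)\to0$. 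The appeal to ``$W_{z_N}>0$ a.s.\ on $\{Z\not\to0\}$'' identifies the right underlying fact but is not what the Markov bound delivers. The fix is to drop the quantitative bound for bounded $z_N$ and argue directly: on $\{Z\not\to0\}$ one has $Z_{t}\geq 1$ for all $t$ and $M_t\to W_{z_N}>0$, whence $\inf_{t\geq0}M_t>0$ a.s.\ on survival; then $\P_{z_N}\big(\{\inf_t M_t\le N^{-\epsilon}\}\cap\{Z\not\to0\}\big)\to0$ by monotone convergence, uniformly over $z_N$ ranging in any finite set. (For $z_N\to\infty$ your WLLN argument does close the loop, since then $\P(W_{z_N}<2N^{-\epsilon})\to0$ and the extinction probability also vanishes.) By contrast, the paper avoids this case split by applying the SLLN to the iid variables $\inf_{s\ge0}Z^{(k)}_se^{-ss}$, using $\E_1[\inf_s Z_se^{-ss}]>0$ to get a lower bound for the normalised sum that is uniform in $t$ and holds with probability tending to $1$ as $z\to\infty$, while for each fixed $z$ one only needs almost-sure finiteness of the sup-distance.
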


The proof of the next lemma can be obtained as in \cite[Lemma A.1]{CMT21}.
\begin{lemma}\label{lem:gw-subcritical}
Assume $b<d$ and let $z_N \in \N$, $N>1$ with $\frac{\log z_N}{\log N} \to h \in [0,\infty)$ as $N\to\infty$.
Then
  \begin{enumerate}
  \item
    $\displaystyle
      \sup_{0\leq t\leq t_0}\Big|\frac{\log^+(Z_{t\log N})}{\log N}- (h - s t)^+\Big|
       \xrightarrow{N\to\infty} 0
    $ in probability under $\P_{z_N}$ for any $t_0>0$;
  \item
    $\frac{T_0}{\log N} \xrightarrow{N\to\infty} \frac hs$ in probability under $\P_{z_N}$.
  \end{enumerate}
\end{lemma}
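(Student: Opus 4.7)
The plan is to combine explicit mean, variance, and survival-probability formulas for the binary branching process with Doob's maximal inequality and a grid argument. Writing $s = d - b > 0$, standard generator computations give $\E_1[Z_t] = e^{-st}$, $\mathrm{Var}_1(Z_t) = O(e^{-st})$ for large $t$, and the closed-form survival probability $\P_1(Z_t > 0) = \frac{s}{de^{st} - b}$. By independence over the $z_N$ founder lineages, $\E_{z_N}[Z_{t\log N}] = z_N N^{-st}$ is of order $N^{h-st}$ under the hypothesis $(\log z_N)/\log N \to h$. For part~(2), since $\P_{z_N}(T_0 > t) = 1 - (1 - \P_1(Z_t > 0))^{z_N}$, when $t = t_N\log N$ with $t_N \to u$ the quantity $z_N \P_1(Z_{t_N\log N} > 0)$ tends to $\infty$ if $u < h/s$ (so that the survival probability tends to $1$) and to $0$ if $u > h/s$ (so that Markov's inequality $\P_{z_N}(Z_{t_N\log N} \geq 1) \leq z_N N^{-st_N}$ forces extinction with probability tending to $1$). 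This yields $T_0/\log N \to h/s$ in probability.

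For part~(1), write $f_N(t) := \log^+(Z_{t\log N})/\log N$ and $f(t) := (h - st)^+$. The uniform upper bound $f_N \leq f + o(1)$ follows from Doob's $L^1$-maximal inequality applied to the nonnegative martingale $M_t := e^{st} Z_t$: for any $\varepsilon > 0$,
\[
\P_{z_N}\Bigl(\sup_{u \leq t_0\log N} M_u \geq z_N N^\varepsilon\Bigr) \leq N^{-\varepsilon},
\]
so on the complementary high-probability event $Z_{t\log N} \leq N^\varepsilon z_N e^{-st\log N}$, which after taking $\log^+$ and dividing by $\log N$ gives $f_N(t) \leq h - st + \varepsilon + o(1)$. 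Combined with the trivial bound $f_N \geq 0$, this gives uniform convergence from above to $(h - st)^+$.

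The uniform lower bound is the subtler half. I would fix a fine grid $0 = u_0 < u_1 < \cdots < u_K = t_0$. At each grid point $u_k < h/s$, Chebyshev's inequality together with the variance estimate gives $Z_{u_k\log N}/\E_{z_N}[Z_{u_k\log N}] \to 1$ in probability, since the coefficient of variation is $O(N^{-(h - su_k)/2})$; hence $f_N(u_k) \to h - su_k$. For $u_k \geq h/s$, the upper-bound argument already delivers $f_N(u_k) \to 0 = f(u_k)$. To lift this from grid points to the whole interval, I would condition on $\mathcal F_{u_k\log N}$ on the event that $Z_{u_k\log N}$ is close to its mean and restart the analysis, applying Doob's $L^1$-maximal inequality to the shifted martingale $e^{s(u - u_k\log N)} Z_u$ to control the minimum of $Z$ on $[u_k\log N, u_{k+1}\log N]$ from below. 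Letting first $N \to \infty$ and then the grid mesh shrink completes the proof.

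The principal obstacle is precisely this last step: concentration at isolated points follows immediately from Chebyshev, but ruling out short deep dips of $Z$ between grid points is delicate because Doob's inequality does not give a downward bound directly on the nonnegative martingale $M_t$. One has to either combine a restart argument with a sufficiently fast control of the variance of the restarted process, or work with a truncated/stopped version of $M_t$, in the spirit of the proof of \cite[Lemma A.1]{CMT21} cited in the statement.
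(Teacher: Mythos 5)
The paper gives no in-text proof --- it refers the reader to the proof of \cite[Lemma A.1]{CMT21} --- so I assess your proposal on its own terms. Your overall architecture (survival formula plus Markov for part (2); Doob $L^1$ maximal inequality on the nonnegative martingale $M_u := e^{su}Z_u$ for the uniform upper bound; grid plus Chebyshev plus restart for the uniform lower bound) is sound, and part (2) together with the upper bound of part (1) is fully correct as you state them.

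The one genuine misstep is the claim that Doob's $L^1$ maximal inequality applied to the restarted martingale controls the minimum of $Z$ from below. It cannot: for a nonnegative martingale the $L^1$ maximal inequality only bounds $\P(\sup M \ge \lambda)$, and there is no elementary $L^1$ analogue for $\P(\inf M \le \lambda)$ (a positive martingale can dip near zero with probability close to one). You recognize this in your last paragraph, and the fix you gesture at --- variance control of the restarted process --- is exactly what is needed, via Doob's $L^2$ maximal inequality. Concretely, condition on $\mathcal F_{u_k\log N}$ with $z := Z_{u_k\log N}\approx N^{h-su_k}$ (from the Chebyshev step), set $T := (u_{k+1}-u_k)\log N$ and $\tilde M_u := e^{su}Z_{u_k\log N + u}$; then $\tilde M_u - z$ is a mean-zero martingale with
\[
\E\big[(\tilde M_T - z)^2\big] = z\,\frac{b+d}{s}\,e^{sT}\big(1-e^{-sT}\big) = O\big(z\,N^{s(u_{k+1}-u_k)}\big),
\]
which is $o(z^2)$ precisely when $u_{k+1} < h/s$, since $N^{s(u_{k+1}-u_k)}/z \approx N^{su_{k+1}-h}$. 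Doob's $L^2$ inequality then yields $\sup_{u\le T}|\tilde M_u - z| = o(z)$ with high probability, hence $Z_{u_k\log N + u} \ge (1-o(1))\,z\,e^{-su} \ge N^{h - su_{k+1} + o(1)}$ uniformly for $u\le T$. A union bound over the finitely many grid intervals (those with $u_k \ge h/s$ being trivial since $\log^+ \ge 0$) completes the lower bound. In short, replacing Doob $L^1$ by Doob $L^2$ together with the branching-process variance formula in that step closes the gap.
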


Another useful property of binary Galton--Watson processes
is the following: For \mbox{$z < g\in\N$}, 
\begin{equation}
  \label{gamblersruin}
\begin{aligned}
\begin{array}{ll}
  \P_z(Z\text{ reaches }g\text{ before it reaches }0)  \leq \frac zg & \text{ when } b \leq d,\\
  \P_g(Z\text{ reaches }z\text{ in finite time}) = (d/b)^{g-z} & \text{ when } b > d.
\end{array}
\end{aligned}
\end{equation}
Indeed, the discrete-time embedding of $Z$ is a simple random walk on $\mathbb N_0$
with probability $p=b/(d+b)$ to jump to the right,
so \eqref{gamblersruin} follows from the well-known gambler's ruin formula.
For the first item, note that the subcritical case can be compared to the critical one.
See also eq.~(27) in~\cite{C06}.

\subsection{Selective sweeps in the presence of multiple mesoscopic types}\label{sec:multitype-nomut}
Throughout this section we fix a $k \in \mathbb N$ (the number of types in the Moran model) and an $m \in \mathbb R_+^k$ (the vector of fitnesses). For $N\in\N$ let $X^N(t)=(X^N_1(t),\ldots,X^N_k(t))$, $t \ge 0$, be a process on
  $$
    \cS_N^k
     := \Big\{x= (x_1,\ldots, x_k): x_\ell \in \mathbb N_0, \sum_{\ell=1}^k x_\ell = N \Big\}
  $$
  whose generator $L$ acts on functions $f:\cS_N^k \to\R$ as
\begin{equation}\label{genL}
    Lf(x)
      = \tfrac{\log N}N\sum_{i\neq j}x_ix_j(1+(m_j-m_i)^+)(f(x+e_j-e_i)-f(x)).
 \end{equation}
 This is the generator of a Moran model with selection, with time accelerated by a factor $\log N$.
 In accordance with \eqref{eq:logplus} we put for all $t \ge 0$ and $\ell = 1, \ldots, k$
 \begin{equation}\label{defH}
   H^N_\ell(t)
     := \frac{\log^+\big(X^N_\ell(t)\big)}{\log N}.
 \end{equation}
 Throughout Section~\ref{sec:multitype-nomut} we make the following assumptions:
 \begin{equation}\label{asinh1}
 (\eta_\ell,m_\ell) \in [0,1]\times \mathbb R, \, \ell=1,\ldots, k,\, \mbox{ are pairwise distinct } \mbox{ with } \eta_1 = 1,
 \end{equation}
 \begin{equation}\label{asinh2}
 H_\ell^N(0) \to \eta_\ell \quad\mbox{in probability as } N\to \infty, \quad \ell = 1,\ldots, k.
 \end{equation}
   An important role will be played by the indicators
   \begin{equation} \label{defBN}
     B^N_\ell
      := \1_{\{X_\ell^{N}(t^N) \geq \log N\}}, \quad \mbox{ where } t^N:= \frac 1{\sqrt {\log N}}.
   \end{equation} 
    
  Throughout this section we fix a sequence $(\overline h_N)$ in $(0,1)$ with the properties
  \begin{equation}\label{condhN}
    \lim_{N\to\infty}\overline h_N
      = 1 \quad \mbox{ and } \quad (1-\overline h_N) \log N\to\infty \,\mbox{ as } N \to \infty
  \end{equation}
   and we observe that \, ${\overline g}_N:= N^{\overline h_N} - 1$ satisfies
   $\log^+(\overline g_N) = \overline h_N$ and
  \begin{equation}\label{mocogN}
    \tfrac{\overline g_N}{N} \to 0 \quad\mbox{as } N\to \infty.
  \end{equation}
For further use we will also require that
\begin{equation}\label{boundforhN}
\frac  N{\sqrt{\log N}} = o(\overline g_N) \quad \mbox{as } N\to \infty.
 \end{equation}
 (A concrete choice for $\overline h_N$ and $\overline g_N$ which satisfies \eqref{condhN} and \eqref{boundforhN} is 
 $\overline h_N := 1- \frac{\log\log N}{3\log N}$, $\overline g_N = \frac N{(\log N)^{1/3}}-1$.)

We begin with some rough linear bounds on $H^N=(H^N_\ell)_{1 \leq \ell \leq k}$, which imply an asymptotic stochastic continuity of $t\mapsto H_\ell^N(t)$ in $t=0$. 

\begin{lemma}\label{lem:linearbounds}
Assume~\eqref{asinh1}, \eqref{asinh2} and set $m_\star:=2\max_{1 \leq \ell \leq k} |m_\ell|$. Then, for any $T,\varepsilon>0$,
  \begin{equation}\label{e:linearbounds}
    \lim_{N\to \infty}
      \P \Big( \exists t \in [0,T] \colon \max_{1 \leq \ell \leq k} \big| H^N_\ell(t) - \eta_\ell\big|\geq (1+m_\star) t + \varepsilon \Big)
      = 0
  \end{equation}
  As a consequence, for any sequence $(\mathscr T_N)$ of random times converging to $0$ as $N\to \infty$,
  $$ \sup_{0\le t \le \mathscr T_N}|H_\ell^N(t)-\eta_\ell| \to 0 \mbox{ in probability as }N\to \infty.$$
\end{lemma}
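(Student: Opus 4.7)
My plan is to sandwich each $X^N_\ell(t)$ between two pure one-sided processes on a common probability space. From the generator~\eqref{genL} the rates at which $X^N_\ell$ increases or decreases by one are each bounded by $(1+m_\star)X^N_\ell\log N$, using that $|m_j-m_i|\le m_\star$ and $\sum_{i\ne\ell}X^N_i\le N$. A standard Poisson thinning construction then provides, jointly with $X^N_\ell$, a Yule process $Y^N$ and a pure-death process $Z^N$, both with per-capita rate $(1+m_\star)\log N$ and started from $X^N_\ell(0)$, such that
\begin{equation}\label{eq:sandwich}
Z^N(t)\,\le\,X^N_\ell(t)\,\le\,Y^N(t),\qquad t\ge 0.
\end{equation}

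For the upper bound, observe that $M_t:=Y^N(t)\,N^{-(1+m_\star)t}$ is a nonnegative martingale with $M_0=X^N_\ell(0)$. Doob's maximal inequality yields $\P\bigl(\sup_{t\le T}M_t\ge K\,X^N_\ell(0)\bigr)\le 1/K$ for any $K>1$, which on the complementary event gives $Y^N(t)\le K\,X^N_\ell(0)\,N^{(1+m_\star)t}$ uniformly in $t\in[0,T]$. Taking $\log^+/\log N$ and using~\eqref{eq:sandwich} delivers
\begin{equation*}
\sup_{t\le T}\bigl[H^N_\ell(t)-(1+m_\star)t\bigr]\,\le\,H^N_\ell(0)+\tfrac{\log K}{\log N}+o(1).
\end{equation*}
Choosing $K=K(\varepsilon)$ large and combining with $|H^N_\ell(0)-\eta_\ell|<\varepsilon/2$ (which holds w.h.p.\ by~\eqref{asinh2}) proves the upper bound $H^N_\ell(t)-\eta_\ell\le(1+m_\star)t+\varepsilon$.

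For the lower bound the inequality is trivial on $\{t:\eta_\ell-(1+m_\star)t-\varepsilon\le 0\}$, since $H^N_\ell\ge 0$; otherwise $\eta_\ell-(1+m_\star)t>\varepsilon$. By thinning, $Z^N(t)\sim\mathrm{Bin}(X^N_\ell(0),N^{-(1+m_\star)t})$, whose mean is at least $N^{\varepsilon/2}$ for $N$ large, by~\eqref{asinh2} and the constraint on $t$. A multiplicative Chernoff bound then gives $\P(Z^N(t)\le\E[Z^N(t)]/2)\le\exp(-N^{\varepsilon/2}/8)$, which translates into the pointwise lower bound $H^N_\ell(t)\ge H^N_\ell(0)-(1+m_\star)t-o(1)$. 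For uniformity over $t\in[0,T]$ I discretize at mesh $\delta:=\varepsilon/(3(1+m_\star))$ and exploit that $Z^N$ is nonincreasing, so $Z^N(t)\ge Z^N(t_{i+1})$ on $[t_i,t_{i+1}]$; a union bound over the $O(1/\delta)$ grid points and over $\ell=1,\dots,k$ is absorbed by the exponentially small Chernoff factor. Combining with the upper bound yields~\eqref{e:linearbounds}.

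The asymptotic stochastic continuity then follows immediately: given $\mathscr T_N\to 0$ in probability and $\delta,\varepsilon>0$, the event $\{\mathscr T_N\le\delta\}$ has probability tending to $1$, and on it~\eqref{e:linearbounds} applied on $[0,\delta]$ gives $\sup_{t\le\mathscr T_N}|H^N_\ell(t)-\eta_\ell|\le(1+m_\star)\delta+\varepsilon$ w.h.p.; sending $\varepsilon,\delta\to 0$ concludes. The main technical point I anticipate is the rigorous construction of the monotone coupling~\eqref{eq:sandwich}: one must check that whenever $X^N_\ell$ agrees with $Y^N$ (resp.~$Z^N$) the actual up-rate (resp.\ down-rate) of $X^N_\ell$ does not exceed the per-capita rate $(1+m_\star)\log N$ of its pure counterpart, which follows from the rate estimates above.
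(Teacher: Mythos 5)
Your sandwiching construction is exactly the one the paper uses: reduce to a single coordinate by a union bound, then couple $X^N_\ell$ between a pure-birth (Yule) and a pure-death process both with per-capita rate $(1+m_\star)\log N$ and started from $X^N_\ell(0)$ (the paper invokes Theorem~\ref{thm:stochdom} for this, and the same stochastic-ordering observation applies to your thinning construction; for the simultaneous three-way coupling, note the paper's parenthetical remark about making $Y,Z$ conditionally independent given $X^N_\ell$). Where you diverge is only in how the logarithmic growth/decay bound is extracted from the coupled processes: the paper simply invokes its pre-established branching-process asymptotics, Lemmas~\ref{lem:gw-supercritical} and~\ref{lem:gw-subcritical}, for the sped-up Yule and pure-death processes, whereas you re-derive the needed uniform estimates from scratch — Doob's maximal inequality applied to the Yule martingale $Y^N(t)\,N^{-(1+m_\star)t}$ for the upper bound, and a binomial Chernoff bound plus discretization and monotonicity of the pure-death process for the lower bound. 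Your route is more self-contained and elementary (it avoids the general almost-sure $W$-martingale machinery behind Lemma~\ref{lem:gw-supercritical}), at the cost of some bookkeeping (the grid, the $\log K/\log N$ slack, the trivial treatment of $X^N_\ell(0)=0$) which the paper outsources to those lemmas. Both arguments are correct, and the derivation of the second assertion from~\eqref{e:linearbounds} is handled the same way in both.
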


\begin{proof}
By the union bound, it is enough to consider a fixed $1 \leq \ell \leq k$.
Let us compare $X^N_\ell$ with two continuous-time Galton--Watson processes $Y$ and $Z$
with individual birth/death rates $0$/$\log N
(1+m_\star)$ and $\log N(1+m_\star) $/$0$ respectively, both started from $X^N_\ell(0)$, and note that the terms $(m_j-m_1)^+$ appearing in~\eqref{genL} are all bounded by $m_\star$.
Setting $\varphi:\mathcal{S}^k_N \to \N_0$, $\varphi(x) = x_\ell$,
$E_0=\mathcal{S}^k_N$
and using \eqref{genL}, it is straightforward to verify condition \eqref{e:stochdomcond} with $A$ the generator of $X$ and $B$ the generator of $Y$ or $Z$, implying that $X^N_\ell$ can be coupled with $Y$ and $Z$ so that $Y(t) \leq X^N_\ell(t) \leq Z(t)$ for all $t \geq 0$; note that Theorem~\ref{thm:stochdom} only couples two processes, but the three processes can be coupled using regular conditional probabilities and taking $Y,Z$ e.g.\ conditionally independent given $X^N_\ell$.
The claim \eqref{e:linearbounds} then follows from Lemmas~\ref{lem:gw-supercritical}--\ref{lem:gw-subcritical}
once we note that $Y,Z$ are sped-up versions (with time sped-up by $\log N$) of processes treated therein.
\end{proof}

Via comparison of jump rates as in Lemma~\ref{lem:linearbounds}, we will show in the next lemma that, in a multitype Moran process with a single macroscopic component, all the other components are close enough to independent branching processes so that their logarithmic frequencies on the $\log N$ timescale converge to linear functions, as long as none of these components become close to macroscopic.

\begin{lemma}[Until the next resident change]\label{lem:multitype-without-kinks}
Suppose that \\
\phantom {Suppose}
(i) \,\,\,$X_1^N(0)\,/\,N \to 1$ in probability as $N\to \infty$ (and consequently $\eta_1 = 1$), \\
\phantom {Suppose}
(ii) \,\,$X_\ell^N(0)\,/\, \overline g_N \to 0$ in probability as $N\to \infty$ for $\ell \ge 2$, \\
\phantom {Suppose}
(iii) \,\,If $\eta_\ell = 1$ for some $\ell \ge 2$, then $m_\ell < m_1$. 
\\
Define 
\begin{align}
&h_\ell:t\mapsto (\eta_\ell + (m_\ell-m_1)t)^+, \quad \ell = 1,\ldots, k,\notag \\
    &\overline T^N
      := \inf\{t \geq 0\mid\max_{\ell\geq2}X_\ell^N(t)\ge \overline g_N\}
       = \inf\{t \geq 0\mid\max_{\ell\geq2}H_\ell^N(t)\ge \overline h_N\}
      \in (0,\infty],\label{defbarT}
\end{align}
where $\inf \emptyset = \min \emptyset = \infty$.
  Then the following hold.
  \begin{enumerate}
  \item[(A)] Let $\ell \ge 1$ be such that $X_\ell^N(0)\to\infty$ (which is implied by \ $0< \eta_\ell\le 1$). 
    Then for all $t_0 > 0$
  \begin{equation}\label{caseA}
    \sup_{0 \leq t \leq \overline{T}^N \wedge t_0}\big| H_\ell^N(t)
          - h_\ell(t)\big|
        \xrightarrow{N\to\infty} 0 \text{ in probability.}
   \end{equation}
   Moreover, $\sup_{0 \leq t < \overline{T}^N}\big|H_1^N(t) - 1\big|\to 0$ in probability as $N\to\infty$.
  \item[(B)]
      Let $\ell \ge 2$ be such that $m_\ell>m_1$ and $X_\ell^N(0) = 1$ for $N\geq1$, which implies $\eta_\ell=0$. Then the random variables $B^N_\ell$ defined in~\eqref{defBN} satisfy
    \begin{equation} \label{survival}
\P(B^N_\ell=1) \sim \P(X_\ell^N(t^N) >0) \to \tfrac{m_\ell-m_1}{1+m_\ell-m_1}=:\pi_\ell \,\,
\text{ as } N \to \infty    \end{equation} 
and, for each $t_0>0$,
   \vspace{-0.5em}
   \begin{equation}\label{straightline}
    \sup_{0 \leq t \leq \overline{T}^N \wedge t_0}\big| H_\ell^N(t)
          - B^N_\ell (m_\ell-m_1)t\big|
        \xrightarrow{N\to\infty} 0 \,\,\text{ in probability.}
   \end{equation}
  \item[(C)]Define
  \[\begin{array}{rcccl}
     \tau
     &:=& \min\limits_{\ell:\,m_\ell > m_1} \big\{\inf\{t\ge 0 \mid h_\ell(t)=1\}\big\}
      &=  \min\big\{\tfrac{1-\eta_{\ell}}{m_\ell-m_1}\,\big|\, \ell = 2,\ldots, k; \, m_\ell>m_1\big\},
      \\[1em]
    \tau^N
     &:=& \min\limits_{\ell:\,m_\ell > m_1} \big\{\inf\{t\ge 0 \mid B_\ell^N h_\ell(t)=1\}\big\}
      &=  \min\big\{\tfrac{1 -\eta_{\ell}}{m_\ell-m_1} \,\big|\, m_\ell>m_1 \,  \mbox{ and } B_\ell^N=1\big\},
  \end{array}\]
    where $\min\emptyset:=\infty$. Then
  \begin{equation}\label{mainassertionC}
      \big | \overline T^N - \tau^N\big | \, \mathds 1_{\{\tau^N < \infty\}}
       + \big | \mathds 1_{\{\tau^N = \infty\}}-  \mathds 1_{\{\overline T^N = \infty\}}\big |
           \xrightarrow{N\to\infty}0 \quad \mbox{in probability}.
  \end{equation}
    Moreover, if $\eta_{j}>0$ for all $j = 1,\ldots, k$, then both  $\overline{T}^N$ and $\tau^N$ converge to $\tau$ in probability.
  \end{enumerate}
\end{lemma}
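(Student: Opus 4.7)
The plan is to compare each $X^N_\ell$, $\ell \ge 2$, with autonomous binary Galton--Watson processes by exploiting the fact that before $\overline T^N$ the type~$1$ frequency is essentially~$1$. Specifically, as long as $t < \overline T^N$, particle conservation together with $\max_{\ell \ge 2} X^N_\ell(t) \le \overline g_N$ gives $X^N_1(t)/N \ge 1 - (k-1)\overline g_N/N = 1 - o(1)$ by~\eqref{mocogN}. Substituting this into~\eqref{genL}, the per-individual birth rate of type~$\ell$ equals $\log N \cdot (1 + (m_\ell - m_1)^+)(1 + o(1))$ and its death rate equals $\log N \cdot (1 + (m_1 - m_\ell)^+)(1 + o(1))$, since contributions from pairs involving a $j\ge 2$, $j\neq \ell$, are $O(\log N\cdot \overline g_N/N) = o(\log N)$. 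Applying Theorem~\ref{thm:stochdom} exactly as in the proof of Lemma~\ref{lem:linearbounds}, I would couple the process $X^N$ stopped at $\overline T^N$ with two sped-up binary Galton--Watson processes $Z^{\pm,N}_\ell$ of drift $\log N\bigl((m_\ell - m_1) \pm \varepsilon_N\bigr)$, $\varepsilon_N \to 0$, so that $Z^{-,N}_\ell \le X^N_\ell \le Z^{+,N}_\ell$ on $[0,\overline T^N]$.

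Parts (A) and (B) then follow from the branching-process asymptotics of Section~\ref{sec:bin-bran-pros}. For (A), if $m_\ell<m_1$, Lemma~\ref{lem:gw-subcritical} applied to the sandwiching processes yields~\eqref{caseA} because $\log X^N_\ell(0)/\log N \to \eta_\ell$; if $m_\ell > m_1$, Lemma~\ref{lem:gw-supercritical} applies and the extinction probability $(d/b)^{X^N_\ell(0)}$ vanishes since $X^N_\ell(0) \to \infty$. The type-$1$ uniform bound follows directly from $X^N_1 \ge N - (k-1)\overline g_N$. For (B), Lemma~\ref{lem:gw-supercritical}(2) with $z_N = 1$ and time horizon $t^N\log N = \sqrt{\log N} \to \infty$ gives $\P(X^N_\ell(t^N) > 0) \to 1 - d/b = \pi_\ell$; combined with the fact that the hitting time of level $\log N$ for a supercritical Galton--Watson process started from~$1$ is of order $\log\log N \ll \sqrt{\log N}$, this yields $B^N_\ell = \1_{\{X^N_\ell(t^N) > 0\}}$ with probability tending to~$1$ and hence~\eqref{survival}. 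Restarting at time $t^N \to 0$ with initial log-scale $\log X^N_\ell(t^N) = o(\log N)$ and applying Lemma~\ref{lem:gw-supercritical}(1) then produces~\eqref{straightline}.

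Part (C) is a continuity argument: parts~(A) and~(B) deliver $\sup_{0 \le t \le \overline T^N \wedge t_0} |H^N_\ell(t) - B^N_\ell h_\ell(t)| \to 0$ in probability for each $\ell$, and the limit trajectories $B^N_\ell h_\ell$ are continuous piecewise-linear with nonzero slope on the relevant intervals. Since $\overline h_N \to 1$ from below, the first hitting time of level $\overline h_N$ by any $H^N_\ell$ converges to the first hitting time of level~$1$ by any $B^N_\ell h_\ell$, which by definition is $\tau^N$; thus $\overline T^N \to \tau^N$ on $\{\tau^N < \infty\}$. On $\{\tau^N = \infty\}$ every $B^N_\ell h_\ell$ stays bounded below~$1$ on every compact interval, forcing $\overline T^N \to \infty$, and \eqref{mainassertionC} follows. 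The main obstacle is to make the branching-process sandwich rigorous and uniform across all $\ell$ while respecting that the autonomous approximation is only valid as long as $X^N_1/N$ is close to~$1$. This can be handled by freezing the Moran dynamics at time $\overline T^N$ (at which instant type~$1$ still has frequency $1 - o(1)$ by~\eqref{mocogN}), applying Theorem~\ref{thm:stochdom} to the frozen process whose rates differ from the autonomous branching rates by a uniformly vanishing amount, and noting that the frozen and original processes agree on $[0, \overline T^N]$.
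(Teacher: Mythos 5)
Your overall strategy — sandwiching each $X^N_\ell$, $\ell\ge 2$, between autonomous binary Galton--Watson processes on $[0,\overline T^N]$ via Theorem~\ref{thm:stochdom} and then invoking Lemmas~\ref{lem:gw-supercritical} and~\ref{lem:gw-subcritical} — matches the paper's. However, there are two genuine gaps. The more serious one is that you do not handle the case $\ell\ge 2$ with $\eta_\ell = 1$ (allowed under hypothesis (iii), with $m_\ell < m_1$), which breaks your part (C). For such an $\ell$ the deterministic trajectory is $h_\ell(t) = (1 - st)^+$ with $h_\ell(0) = 1$, and the logarithmic approximation from Lemma~\ref{lem:gw-subcritical} only gives $H^N_\ell(t) \approx 1 - st$, which hovers at or above $\overline h_N$ for small $t$; this level of precision cannot rule out that random fluctuations push $X^N_\ell$ above $\overline g_N$ at arbitrarily small times, which would make $\overline T^N \to 0$ rather than $\overline T^N \to \tau$. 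Your continuity argument in (C) silently ignores these components (they are excluded from $\tau$ and $\tau^N$ since $m_\ell < m_1$), so nothing in your sketch bounds their contribution to $\overline T^N$. The paper closes this via an optional-stopping/supermartingale argument: in the regime \eqref{mesoscopes} the birth rate of such a component is smaller than its death rate, so $X^N_\ell(t\wedge\overline T^N)$ is a supermartingale and $\P(\overline T^N_\ell \le t\wedge\overline T^N) \le X^N_\ell(0)/\overline g_N \to 0$ by hypothesis (ii). You would need this (or a gambler's-ruin bound on the upper sandwiching process via \eqref{gamblersruin}, which amounts to the same thing) before (C) can go through; it is also implicitly needed to justify that $\overline T^N$ is bounded away from $0$ in probability, which underlies the survival-probability computation in (B).

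The second issue is technical but worth flagging: you propose sandwiching processes $Z^{\pm,N}_\ell$ whose drifts $\log N\bigl((m_\ell-m_1)\pm\varepsilon_N\bigr)$ vary with $N$, and then apply Lemmas~\ref{lem:gw-supercritical}--\ref{lem:gw-subcritical} directly. Those lemmas are stated for fixed birth/death rates $b,d$, not for a sequence of rates converging to a limit, so their conclusions do not follow verbatim for $N$-dependent rates; one would need a uniformity statement that is not in the paper. The paper sidesteps this by introducing a fixed auxiliary parameter $K$: it builds sandwiching processes with rates $\beta^\pm_{\ell,K},\delta^\pm_{\ell,K}$ that do not depend on $N$, applies the branching-process lemmas for fixed $K$, and only afterwards lets $K\to\infty$. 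Your approach can be made rigorous by the same two-parameter device, but as written the step from the sandwich to the branching-process lemmas is not justified. Finally, in (B) your remark that ``the hitting time of level $\log N$ is of order $\log\log N$'' gives the right intuition but omits the gambler's-ruin estimate (second line of \eqref{gamblersruin}) that guarantees the process does not subsequently drop back below $\log N$ at time $t^N$; the paper handles this with Theorem~\ref{thm:superbranch}\,3b)--3c) together with a $(d/b)^{\log N}$ bound.
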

\begin{proof}  
 Because of~\eqref{mocogN}, for $t\le \overline T^N$,
  \[
    1\ge \tfrac{\log(X^N_1(t))}{\log N} \ge \tfrac{\log (N-k\overline g_N)}{\log N} = \tfrac{\log(1-k\overline g_N/N)+\log N}{\log N} \to 1\quad\mbox { as } N\to \infty,
       \]
 which proves assertion (A) in case of $\ell=1$.\\
 Before proving the remaining assertions of the lemma, we make a few preparations. 
 The generator \eqref{genL} tells that, when $X^N$ is in state $x$, 
  the total rate for its $\ell$-coordinate to increase by one is $x_\ell\cdot \beta_{\ell,N}(x)\log N$ and the total rate to decrease is $x_\ell\cdot \delta_{\ell,N}(x)\log N$, where
  \begin{align*}
    \beta_{\ell,N}(x)
     &:= \frac1N\sum_{i\neq\ell}x_i\big(1+(m_\ell-m_i)^+\big)
    \quad\text{and}\quad
    \delta_{\ell,N}(x)
      := \frac1N\sum_{i\neq\ell}x_i\big(1+(m_i-m_\ell)^+\big).
  \end{align*}
To prepare for a comparison argument, we note that for all $x \in \mathcal S_N^k$ satisfying the inequality
\begin{equation}\label{mesoscopes}
  \max_{2\le j \le k} x_j < \overline g_N
  \end{equation}
  one has, (with a view on \eqref{mocogN}) for $K>0$ and $N\in \N$ large enough, the estimates
 \begin{align}\notag
   &\beta_{\ell,N}(x)  
      \ge \tfrac1N(N-k\overline g^{}_N)(1+(m_\ell-m_1)^+) \geq (1-\tfrac 1K)(1+(m_\ell-m_1)^+) =: \beta^-_{\ell,K},
    \\[.5em]\label{lowerestdelta}
  &\delta_{\ell,N}(x)
      \ge  \tfrac1N(N-k\overline g^{}_N)(1+(m_1-m_\ell)^+)
      \ge  (1-\tfrac1K)(1+(m_1-m_\ell)^+) 
      =:  \delta^-_{\ell,K},
    \\[.5em]\label{upperestbeta}&\beta_{\ell,N}(x) 
     \le 1+(m_\ell-m_1)^+ + \tfrac1Nk\overline g^{}_N(1+\max_{i\geq2}(m_\ell-m_i)^+))\\
     \notag
      &\phantom{AAAAAA}\le 1+(m_\ell-m_1)^+ + \tfrac1K(1+\max_{i\geq2}(m_\ell-m_i)^+))
     \quad=: \,\beta^+_{\ell,K},\\[-0.5em]\notag
   &\delta_{\ell,N}(x)
     \leq 1+(m_1-m_\ell)^+  + \tfrac1Nk\overline g^{}_N(1+\max_{i\geq2}(m_i-m_\ell)^+))
     \\\notag
      &\phantom{AAAAAA}\le
      1 + (m_1-m_\ell)^+ + \tfrac1K(1+\max_{i\geq2}(m_i-m_\ell)^+))\quad=:\,\delta^+_{\ell,K}.
  \end{align}
Setting $\varphi: \mathcal{S}^k_N \to \N_0$, $\varphi(x) = x_\ell$
and \mbox{$E_0= \{x \in \mathcal{S}^k_N \colon\, x_i < \overline{g}_N~ \forall i \geq 2 \}$},
we can reason as in the proof of Lemma~\ref{lem:linearbounds} 
to define processes $Y_\ell^{K,N}$ and $Z_\ell^{K,N}$ on the  same probability space as $X^N$ such that $Y^{K,N}_\ell$, $Z^{K,N}_\ell$ are continuous-time Galton--Watson processes with individual birth/death rates 
$\beta^-_{\ell,K}$ / $\delta^+_{\ell,K}$ and 
$\beta^+_{\ell,K}$ / $\delta^-_{\ell,K}$ respectively and obey 
\begin{align}\notag
& Y^{K,N}_\ell(0) = X^N_\ell(0) = Z^{K,N}_\ell(0)\\
   \label{sandwich}
   & Y^{K,N}_\ell(t\log N) \le X^N_\ell(t) \le Z^{K,N}_\ell(t \log N), \quad   0 \leq t \leq \overline T_N. 
\end{align}
Moreover, by restarting $Y_\ell^{K,N}$, $Z_\ell^{K,N}$ at time $\overline{T}_N \log N$
and coupling them afterwards via Theorem~\ref{thm:stochdom}, 
we can make sure that  $Y_\ell^{K,N}(t) \leq Z_\ell^{K,N}(t)$ for all $t\geq 0$. 

For further use we define for $j\ge 2$
\begin{equation}\label{defhatTN}
    \overline{T}^N_j:= \inf\big\{ t \geq 0 \mid X^N_j(t) \geq \overline{g}_N \big\}.
    \end{equation}
After these preparations we turn to the proof of assertion (A).

\smallskip
Consider first the case of an $\ell \ge 2$ with $\eta_\ell = 1$ and (as enforced by assumption (iii)) \mbox{$m_\ell < m_1$}.
Then, as $N\to \infty$, the first bound in~\eqref{lowerestdelta} converges to $1$, while the first bound in~\eqref{upperestbeta} converges to $1+m_1-m_\ell$. This implies that, for $N$ large enough, and all $x \in \{y\in\mathcal S_N^k|y\text{ satisfies~\eqref{mesoscopes}}\}$, we have the inequality $\beta_{\ell,N}(x) < \delta_{\ell,N}(x)$. Consequently, for these $x$ the function $\phi(x):=x_\ell$  fulfills, with $L$ as in~\eqref{genL}, the inequality $L\phi(x) \le 0$, i.e., $\phi$ is superharmonic inside this set. This shows that $X^N_\ell(t \wedge T^N)$, $t\ge 0$, 
is a supermartingale.  Hence for all $t\ge 0$
$$
  X_\ell^N(0)
    \ge \E[X_\ell^N(t\wedge \overline T_\ell^N\wedge \overline T^N)\mid X^N(0)]
    \ge \overline g_N \P(\overline T_\ell^N \le t\wedge \overline T^N\mid X^N(0)),
$$
showing that $\P(\overline T_\ell^N \le t\wedge \overline T^N\mid X^N(0))\le \frac{X_\ell^N(0)}{\overline g_N}$ almost surely. Because of assumption (ii)  this proves that for all  $t>0$
\begin{equation}\label{negativem}
\P(\overline T_\ell^N= \overline T^N\le t) \to 0 \quad \mbox{as } N\to \infty.
\end{equation}
Next we consider the case of an $\ell>1$ with $\eta_\ell \in (0,1)$, and distinguish the three subcases
$$  \text{a) }m_\ell >m_1, \quad  \text{b) }m_\ell = m_1, \quad \text{c) } m_\ell < m_1 $$
As can be seen from their definition, the birth and death rates of the Galton--Watson processes $Y_\ell^{K,N}$ and $Z_\ell^{K,N}$ satisfy
\begin{equation}\label{ratesofYandZ}
\beta^\pm_{\ell,K} \to 1+(m_\ell-m_1)^+\, \mbox{ and } \,\,\delta^\pm_{\ell,K} \to 1+(m_1-m_\ell)^+ \quad \mbox{ as }K\to \infty. 
\end{equation}
Consequently, for $K$ large enough, in case a) both $Y_\ell^{K,N}$ and $Z_\ell^{K,N}$ are supercritical, in case~c) both $Y_\ell^{K,N}$ and $Z_\ell^{K,N}$ are subcritical, and in case~b) $Y_\ell^{K,N}$ is subcritical while $Z_\ell^{K,N}$ is supercritical.  Turning to~\eqref{caseA}, for given $t_0$, let $K$ be so large that the sup-distance of both $t\mapsto \eta_\ell +((\beta_{\ell,K}^+-\delta_{\ell,K}^-)t)^+$ and $t\mapsto \eta_\ell +((\beta_{\ell,K}^--\delta_{\ell,K}^+)t)^+$ to $h_\ell$ on $[0,t_0]$ is smaller than a given (small) threshold; this can be achieved by~\eqref{ratesofYandZ}.
Note that $Y_\ell^{K,N}$ and $Z_\ell^{K,N}$ have birth and death rates that are independent of $N$ and only their initial conditions depend on $N$. Hence, 
Lemma~\ref{lem:gw-supercritical} and Lemma~\ref{lem:gw-subcritical}
apply, and due to the assumption~\eqref{asinh2},  the logarithmic frequencies  $\frac{\log Y_\ell^{K,N}(t\log N)}{\log N}$ and $\frac{\log Z_\ell^{K,N}(t\log N)}{\log N}$, $t\in [0,t_0]$ then both are, with high probability as $N\to \infty$, close to $h_\ell$ in the sup-distance.  The claimed convergence~\eqref{caseA} then follows from the sandwiching relation~\eqref{sandwich}, thus completing the proof of assertion (A).

Consider now the assertion (B). Here, $Z_\ell^{K,N}$ and $Y_{\ell}^{K,N}$ are completely independent of $N$ (since the initial condition is always $1$), and therefore we will remove $N$ from their notation. Choosing $K$ so large that $Y_\ell^{K}$ is supercritical, let us show that, as $N\to\infty$,
\begin{equation}\label{survivalK}
\begin{aligned}
  \P_1 \big(Y_\ell^{K}(\sqrt{\log N}) \geq \log N \big) 
    \sim \P_1 \big(Y_\ell^{K}(\sqrt{\log N}) \geq 1 \big)
    \sim 1 - \frac{\delta_{\ell,K}^+}{\beta_{\ell,K}^-}.
\end{aligned}
\end{equation}
Indeed, the second equivalence follows by Lemma~\ref{lem:gw-supercritical}, 2. 
For the first, note that the first probability above is not larger than the second and not smaller than
\[
\begin{aligned}
  & \P_{1} \Big(Y_\ell^{K} \text{ reaches } 2 \log N \text{ by time } \sqrt{\log N}, Y_\ell^{K}(\sqrt{\log N})
  \geq \log N \Big) \\
  \geq \; & \P_{1} \Big(Y_\ell^{K} \text{ reaches } 2 \log N \text{ by time } \sqrt{\log N} \Big) - \big(\delta^+_{\ell, K}/\beta^-_{\ell, K} \big)^{\log N}
\end{aligned}
\]
by \eqref{gamblersruin}.
Thus parts 3b) and 3c) of Theorem~\ref{thm:superbranch} finish the proof of \eqref{survivalK}.
In order to exploit the monotone coupling~\eqref{sandwich}  up to time $t^N$ we will use the fact 
\begin{equation}\label{positiveTbar}
  \P(t^N\le \overline T^N)\to 1 \mbox{ as } N\to \infty.  
\end{equation}
(Indeed \eqref{positiveTbar} follows by combining~\eqref{negativem} with Lemma~\ref{lem:linearbounds}, where the latter takes care of those components for which $\eta_j <1$, while~\eqref{negativem} shows that $\overline T_j^N$ is stochastically bounded away from $0$ as $N\to \infty$ for all those $j\ge2$ for which $\eta_j=1$.)

Using the same arguments for $Z_\ell^{K}$ as well as \eqref{sandwich} and~\eqref{positiveTbar}
it follows that
  \[
  \begin{aligned}
    1 - \frac{\delta_{\ell,K}^+}{\beta_{\ell,K}^-}
   & \leq \liminf_{N\to\infty} \P(B^N_\ell=1) \leq \limsup_{N\to\infty} \P(B^N_\ell=1) \\
   &  \leq \limsup_{N\to\infty}\P(X^N_\ell(t^N) \geq 1) 
   \leq     1 - \frac{\delta_{\ell,K}^-}{\beta_{\ell,K}^+},
\end{aligned}
  \]
  and since both left- and right-hand sides above converge to $\pi_\ell$ as $K\to\infty$, 
  we verify \eqref{survival}. 

Now note that, since $\{Y_\ell^{K} \text{ survives}\} \subset \{Z_\ell^{K} \text{ survives}\}$
and both events have asymptotically equal probability as $K\to\infty$, 
with probability tending to $1$ either both survive or both die out. 
By \eqref{sandwich} and \eqref{survivalK} (and its analogue for $Z_\ell^{K}$),
w.h.p., in the first case $B^N_\ell=1$ and in the second case $B^N_\ell=0$.
In each case, by Lemma~\ref{lem:gw-supercritical}, $\frac{\log^+(Y_\ell^{K}(t\log N))}{\log N}$ and $\frac{\log^+(Z_\ell^{K}(t\log N))}{\log N}$ approximate, as $N\to \infty$ (in probability uniformly in $t\geq 0$) two lines which, for $K\to\infty$, converge to either $\eta_\ell +  (m_\ell-m_1)t$ or to $0$, respectively. 
Together with \eqref{sandwich} again, this shows \eqref{straightline} and completes the proof of assertion (B).

Finally we turn to assertion (C). From the assertions~\eqref{caseA} and~\eqref{straightline} we infer that
\begin{equation}\label{comp_overTN_tildeTN}
    \P(\overline{T}^N=\widetilde{T}^N) \to 1 \mbox{ as } N\to \infty .
    \end{equation}
    where (with $\overline{T}^N_j$ defined in~\eqref{defhatTN}) 
$$
  \widetilde{T}^N:= \min\{\overline{T}^N_j \mid j \geq 2, m_j> m_1, B_j^N =1\}.
$$
Thus it is enough to show~\eqref{mainassertionC} for $\widetilde{T}^N$ (in place of $\overline T^N$). Also, it suffices to consider the case in which the set $J:= \{j \ge 2: m_j > m_1\}$ is non-empty (because otherwise both $\tau^N$ and $\widetilde{T}^N$ equal $\infty$). Now $\tau^N$ is the minimum of the first hitting times of the level 1 of the processes $t \mapsto B_j^N h_j(t)$, $j \in J$, while $\widetilde T^N$ is, with high probability as $N\to \infty$, equal to the minimum of the first hitting times of the level $\overline h_N$ of the processes $H_j^N$, $j \in J$. The claim that on the events $\{\tau^N < \infty\}$ the distance of $\tau^N$ and $\widetilde{T}^N$ converges to $0$ in probability thus follows from~\eqref{caseA} and~\eqref{straightline} (note that $\overline h_N \to 1$ as $N\to \infty$, and that on the events $\{\tau_N < \infty\}$ the times $\tau_N$ are uniformly bounded by a constant $t_0$ not depending on $N$). The proof of~\eqref{mainassertionC} is thus concluded by observing that, as $N\to \infty$, the event $\{\widetilde T^N=\infty\}$ occurs with high probability if and only if $\eta_j=0$ and $B_j^N=0$ for all $j \in J$, which is precisely the case if $\tau^N = \infty$.

The last statement in (C) follows from~\eqref{mainassertionC} together with the fact that $\P(B_j^N=1)\to 1$ as $N\to \infty$, provided that $\eta_j > 0$.
\end{proof}
While Lemma~\ref{lem:multitype-without-kinks} takes care of the phases between resident change times,  the next lemma treats the (short) ``competition phases'' around the resident change times. More specifically, Lemma~\ref{lem:multitype-sweep} will consider the case where one macroscopic component gets invaded by a fitter component starting from `almost' macroscopic size, while all
other components are mesoscopic. We will show that the time it takes until the first component becomes mesoscopic and the invading component becomes macroscopic is asymptotically negligible on the 
$\log N$-timescale and leaves the remaining $\log$-scaled mesoscopic type sizes asymptotically unchanged. Together with Lemma~\ref{lem:multitype-without-kinks} this reflects the well-known fact that the time required for a single advantageous mutation to go from a small fraction of a population to a big fraction close to one is negligible compared to the time which the mutant's offspring needs to reach a small fraction of the population.

\begin{lemma}[Change of resident]\label{lem:multitype-sweep}
{Let $X^N$ be an $\mathcal S^k_N$-valued process with generator~\eqref{genL}, with the sequence of (random) initial conditions $X_i^N(0)$, $i=1,\ldots,k$, and the vector $m\in \R_+^k$ of fitnesses satisfying \eqref{asinh1} and \eqref{asinh2}.}
Suppose that, for some $\ell_\star \in \{2,\ldots, k\}$,
\begin{enumerate}[\quad(a)]
    \item ${X^N_1(0)} \sim N$ (and consequently ${X^N_\ell(0)} = o(N)$ for $\ell =2,\ldots, k$) as $N\to \infty$,
    \item $\eta_{\ell_\star}=1$ and $m_{\ell_\star}> m_1$,
    \item $\max \big\{m_\ell: \ell \notin \{1,\ell_\star\} \text{ and } \eta_\ell = 1 \big\} < m_{\ell_\star}$ (where $\max \emptyset = -\infty$).
\end{enumerate}
With $\varepsilon_N := \frac 1{\sqrt {\log N}}$ 
define
\begin{equation}
\label{defcurlyT}
\mathscr{T}^N
     := \inf \big\{t\geq 0\mid X^N_{\ell_\star}(t)
     \geq N(1-\varepsilon_N) \big\}.
\vspace{-0.5em}
\end{equation}
  Then the following holds:
  \begin{enumerate}[\qquad(1)]
  \item
    $\mathscr{T}^N\to0$ in probability;
  \item
    $\sup_{t\in[0,\mathscr T^N]}|H^N_\ell(t)-\eta_{\ell}|\to0$
    in probability for all $\ell\in\{1,\ldots,k\}$.
  \end{enumerate}
\end{lemma}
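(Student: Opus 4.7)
My first observation is that part~(2) is an immediate consequence of part~(1) combined with the asymptotic stochastic continuity stated at the end of Lemma~\ref{lem:linearbounds}: once $\mathscr T^N \to 0$ in probability, applying that consequence with $\mathscr T_N := \mathscr T^N$ yields $\sup_{t \in [0,\mathscr T^N]} |H^N_\ell(t) - \eta_\ell| \to 0$ in probability for every $\ell \in \{1,\ldots,k\}$. Hence the substantive content of the lemma is part~(1), and the remainder of the plan is devoted to it.

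To prove (1), I would split the sweep at the intermediate stopping time $\mathscr T^N_{1/2} := \inf\{t \geq 0 : X^N_{\ell_\star}(t) \geq N/2\}$ and bound $\mathscr T^N_{1/2}$ and $\mathscr T^N - \mathscr T^N_{1/2}$ separately by $o_P(1)$. In the \emph{take-off phase} $[0,\mathscr T^N_{1/2}]$, the initial size $X^N_{\ell_\star}(0) = N^{1-o_P(1)}$ is already log-macroscopic because $\eta_{\ell_\star}=1$, while assumption~(a) together with a short-horizon application of Lemma~\ref{lem:linearbounds} shows that $X^N_1(t) \geq N/2$ and that every type fitter than $\ell_\star$ (which by (c) must have $\eta_\ell < 1$) remains of size $o(N)$, with probability tending to $1$. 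On this event, the jump-rate estimates and stochastic-domination framework from the proof of Lemma~\ref{lem:multitype-without-kinks} (via Theorem~\ref{thm:stochdom}) yield a coupling in which $X^N_{\ell_\star}$, as long as it stays below $N/2$, dominates a supercritical binary Galton--Watson process $Y$ with per-capita net growth rate of order $(m_{\ell_\star}-m_1)\log N$. Lemma~\ref{lem:gw-supercritical}(1) then shows the log-scaled size of $Y$ is uniformly close to $(1-o_P(1))+\Theta(1)\cdot t$ and therefore reaches $\log(N/2)/\log N = 1-o(1)$ at some $t = o_P(1)$.

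In the \emph{shake-off phase} $(\mathscr T^N_{1/2}, \mathscr T^N]$, I analyze the complementary count $U_t := N - X^N_{\ell_\star}(t)$, which must drop from at most $N/2$ to at most $N\varepsilon_N$. A direct mean-drift calculation shows that, whenever $X^N_{\ell_\star}/N \geq 1/2$, every non-$\ell_\star$ macroscopic type $\ell$ has per-capita drift $\log N(m_\ell - \bar m_t) \leq -c \log N$ for some constant $c>0$, using that every log-macroscopic non-$\ell_\star$ type satisfies $m_\ell < m_{\ell_\star}$ (by (b)--(c)) and that mesoscopic fitter types contribute a vanishing weight to $\bar m_t$ thanks to Lemma~\ref{lem:linearbounds}. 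Stochastic domination by a subcritical Galton--Watson process (Theorem~\ref{thm:stochdom}) combined with Lemma~\ref{lem:gw-subcritical}(1) then shows each such macroscopic $x_\ell$ drops to $o(N\varepsilon_N)$ within time $O(\log\log N/\log N) = o(1)$, while the mesoscopic part of $U$ remains $o(N\varepsilon_N)$ on any time horizon shrinking to zero, again by Lemma~\ref{lem:linearbounds}. Combining yields $\mathscr T^N - \mathscr T^N_{1/2} = o_P(1)$.

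The most delicate step will be the mean-drift calculation in the shake-off phase: the assumptions do not directly control log-macroscopic non-$\ell_\star$ types with $m_\ell$ arbitrarily close to (but strictly below) $m_{\ell_\star}$, so one needs to argue carefully that the gap $m_{\ell_\star} - \bar m_t$ stays bounded below by a positive constant uniformly throughout the phase. This hinges on the uniform-in-time logarithmic size control supplied by Lemma~\ref{lem:linearbounds}, which is the reason why the split-and-compare strategy closes cleanly.
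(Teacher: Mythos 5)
Part~(2) follows from part~(1) together with the last assertion of Lemma~\ref{lem:linearbounds}, exactly as you say; this matches the paper. The issue is with your plan for part~(1).

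Your split into a ``take-off'' phase $[0,\mathscr T^N_{1/2}]$ and a ``shake-off'' phase $(\mathscr T^N_{1/2},\mathscr T^N]$ is a genuinely different decomposition than the paper's (the paper reduces to the two-type case by merging and then uses an exact time-change $S_t=\int_0^t\phi(Y_u)^{-1}\,\d u$ of a branching process), but the shake-off step as you stated it is false. You claim that whenever $X^N_{\ell_\star}/N\geq 1/2$, every non-$\ell_\star$ macroscopic type $\ell$ satisfies $m_\ell-\bar m_t\leq -c$. This fails: take $k=3$, $m_1=0$, $m_2=m_{\ell_\star}$, and $m_3\in(m_{\ell_\star}/2,\,m_{\ell_\star})$ (consistent with assumption~(c)). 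At a state with $x_{\ell_\star}/N=1/2$, $x_1/N\approx 1/2$, $x_3/N$ small but macroscopic, one has $\bar m_t\approx \tfrac12 m_{\ell_\star}$, so $m_3-\bar m_t\approx m_3-\tfrac12 m_{\ell_\star}>0$: type $3$ is still \emph{growing}. Thus the proposed stochastic domination of each macroscopic non-$\ell_\star$ coordinate by a fixed subcritical Galton--Watson process cannot hold on this event. What is actually negative here is the drift of the aggregate $U_t=N-X^N_{\ell_\star}(t)$, not of each summand; to exploit that one either needs to work with $U_t$ itself, or (as the paper does) merge all types with $m_\ell<m_{\ell_\star}$ into a single coordinate and compare against a two-type Moran model, for which the time-change argument closes cleanly.

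There is a second, smaller problem in the take-off phase. First, Lemma~\ref{lem:linearbounds} is a \emph{logarithmic} size control; it gives $H^N_1(t)\geq 1-o(1)$, which does not imply $X^N_1(t)\geq N/2$. Second, a fixed-rate supercritical Galton--Watson lower bound on $E_0=\{x:x_{\ell_\star}\leq N/2\}$ via Theorem~\ref{thm:stochdom} is too coarse: on $E_0$ the infimum of $\beta_{\ell_\star,N}$ is about $\tfrac12(1+m_{\ell_\star}-m_1)$ while the supremum of $\delta_{\ell_\star,N}$ is about $1$, so the dominated process is supercritical only when $m_{\ell_\star}-m_1>1$. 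The paper's time-change sidesteps this because both per-capita birth and death rates of $X^N_{\ell_\star}$ scale by the same factor $(N-x_{\ell_\star})/N$, so the embedded jump chain stays uniformly supercritical even though the absolute rates shrink near the boundary. To salvage a domination-only argument you would have to take the take-off threshold to be $\varepsilon_N N$ (vanishing fraction), but then the shake-off phase carries essentially all the weight and the individual-type comparison there is exactly where the argument breaks as explained above.
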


\begin{proof} 

Without loss of generality we may assume that $\ell_\star = 2$.
Let us first show (1) in the case $k=2$, in which $X^N_2$ is Markovian.
Let $Y$ be a continuous-time Galton--Watson process started from $Y(0)=X^N_2(0)$ with individual birth and death rates $(1+m_2-m_1)$ and $1$ respectively, and set $c_2:=m_2-m_1$. 
Define $$\tau^N_0:=\inf\{t \geq 0 \colon\, Y(t \log N)\geq \varepsilon_NN\}, \quad \tau^N:= \inf\{t \geq 0 \colon\, Y(t \log N)\geq N(1-\varepsilon_N)\}.$$
Since $\tau_0^N \leq \tau^N \leq \inf\{ t\geq 0 \colon\, Y(t\log N) \geq N\}$, 
Theorem~\ref{thm:superbranch}\,3c)
implies that, with high probability, $\tau_0^N \leq \tfrac{2}{c_2}[1-\log(X^N_2(0))/\log N]$.
Moreover, since $\tau^N-\tau_0^N$ under the probability $\P_{X^N_2(0)}(\cdot \mid \tau_0^N < \infty)$ has the same distribution as $\tau^N$ under $\P_{\lceil \varepsilon_N N\rceil}$, we see that $\tau^N-\tau^N_0 \leq \frac{2}{c_2}\log (1/ \varepsilon_N )/ \log N$ w.h.p.\ under $\P_{\lceil  \varepsilon_N N\rceil}$. 

Define $\phi:\{0,\ldots,N\} \to (0,\infty)$ by $\phi(x)=1-(x \wedge \lceil N(1-\varepsilon_N)\rceil)/N$ and introduce the time-change
\[
S_t:= \int_0^t \frac{1}{\phi(Y_u)} du.
\]
Note that $\phi(x) \geq \varepsilon_N-N^{-1}\geq \tfrac12 \varepsilon_N$ for large $N$ so that $S_t$ is continuous, strictly increasing and $\lim_{t\to\infty} S_t = \infty$.
The relation between the generators of $X^N_2$ and $Y$ shows that
$X^N_2(t \wedge \mathscr{T}^N)_{t \geq 0}$ has the same distribution as $(Y_{\sigma_t \wedge \tau^N})_{t \geq 0}$
where $\sigma_t$ is the inverse of $S_t$
(see e.g.\ \cite[Section~6.1]{ethier2009markov}).
In particular, $\mathscr{T}^N$ is equal in distribution to $S_{\tau^N}$.
Since $\phi(x) \geq 1/2$ for $x \leq \varepsilon_N N$, and since $\eta_2 = 1$ by assumption, we conclude that
\begin{equation*}
\begin{aligned}
S_{\tau^N}  = \int_0^{\tau_0^N} \frac{1}{\phi(Y_u)} du +\int_{\tau_0^N}^{\tau^N} \frac{1}{\phi(Y_u)} d u
& \leq 2 \tau_0^N + 2 \frac{1}{\varepsilon_N} (\tau^N-\tau_0^N) \\
& \leq \frac{4}{c_2} \Bigg[1-\frac{\log {X^N_2(0)}}{\log N} \Bigg]
+ \frac{4}{c_2}\frac{\log(1/\varepsilon_N)}{\varepsilon_N \log N}
\to 0
\end{aligned}
\end{equation*}
in probability as $N\to \infty$.
 This shows (1) in the case $k=2$.

 For general $k$, we will first apply Lemma~\ref{lem:linearbounds} to deal with the coordinates $\ell \geq 3$ where $m_\ell \geq m_2$ (so $\eta_\ell<1$). 
Let $J := \{3 \leq \ell \leq k \colon\, m_\ell \geq m_2\}$, 
define $\varphi:\mathcal{S}^k_N \to \{0,1,\ldots, N\}$
by $\varphi(x_1, \ldots, x_k) = \sum_{j \in J} x_j$
and set $Y^N(t) := \varphi(X^N(t))$.
We will compare $Y^N$ to the second coordinate of a bivariate process
$\widehat{X}^{N}=(\widehat X^{N}_1,\widehat X^{N}_2)$ with generator as in \eqref{genL},  and $\widehat m := (\min\{m_\ell \mid \ell \notin J \}, \max\{m_\ell \mid \ell \in J\})$ in place of $m$.
Note that $\widehat{X}^N_{2} $  is Markovian.
We start $\widehat{X}^N$ from $\widehat X^{(N)}(0) = (N-Y^N(0), Y^N(0))$.
It is straightforward to verify \eqref{e:stochdomcond} with $A$ the generator of $X^N$, $E=\mathcal{S}^k_N$ and 
$B$ the generator of $\widehat{X}^N_2$, so Theorem~\ref{thm:stochdom} gives a coupling such that $Y^N$ is smaller than $\widehat{X}^N_2$ for all times.
Now, the sequence $\widehat X^{N}(0)$ satisfies 
$$\lim_{N\to \infty} \frac{\log(\widehat X_i^N(0))}{\log N} =: \widehat \eta_i, \quad i=1,2$$
with $\widehat \eta_2 < 1$. 
By Lemma~\ref{lem:linearbounds}, there exist $\eta^\ast<1$ and $t^\ast>0$ such that $Y^N(t)\leq N^{\eta^\ast}$ for all $t\in[0,t^\ast]$ with high probability.

Next, we will deal with the coordinates where $m_\ell<m_2$ by reducing to the case $k=2$.
Let $\widetilde{J}:= \{1 \leq \ell \leq k \colon\, m_\ell < m_2\} = J^c \setminus \{2\}$,
define 
$\widetilde{\varphi}(x) = \sum_{k \in \widetilde{J}} x_k$ and 
set $$\widetilde{Y}^N(t) := \widetilde{\varphi}(X^N(t))=N-Y^N(t)-X_2^N(t).$$
Using Theorem~\ref{thm:stochdom}, we can couple $\widetilde{Y}^N$ with the first coordinate of a bivariate process $\widetilde{X}^{N}=(\widetilde{X}^{N}_1,\widetilde{X}^{N}_2)$ with generator as in \eqref{genL} 
and $\widetilde{m} := (\max\{m_\ell\mid\ell \in \widetilde{J}\}, m_2)$ in place of $m$,
started from $ \widetilde{X}^{N}(0)= (\widetilde{Y}^N(0), N-\widetilde{Y}^N(0))$,
in such a way that $\widetilde{Y}^N$ is smaller than $\widetilde{X}^N_1$ for all times.
Since the conclusion of the case $k=2$ of the present lemma applies for $\widetilde{X}^N$ also with  $\varepsilon_N$ substituted by $\tfrac12\varepsilon_N$,
we infer that $\widetilde{\mathscr{T}}^N:= \inf\{t \geq 0\colon \, X^N_2(t) + Y^N(t) \geq N(1-\tfrac12\varepsilon_N)\}$
converges to zero in probability as $N\to\infty$. 
Finally, since $X^N_2(\widetilde{\mathscr{T}}^N) \geq N(1-\tfrac12\varepsilon_N)-N^{\eta^*} \geq N(1-\varepsilon_N)$
for large $N$, we conclude that $\mathscr{T}^N\leq \widetilde{\mathscr{T}}^N$ w.h.p. This finishes the proof of (1).
Now (2) follows from (1) and Lemma~\ref{lem:linearbounds}.
\end{proof}

\begin{remark}[The final state from Lemma~\ref{lem:multitype-sweep} as initial state for   Lemma~\ref{lem:multitype-without-kinks}]\label{concat} 
Recalling the definition of $\mathscr{T}^N$ and $\varepsilon_N$ as well as the assumption on $\ell_\star$ in Lemma~\ref{lem:multitype-sweep}, note that 
\begin{equation}\label{largetype_Renato}
X_{\ell_\star}^N(\mathscr T^N) \geq N\left(1-\frac{1}{\sqrt{\log N}}\right),
\end{equation}
and in particular $X^N_{\ell_\star}(\mathscr{T}^N) \sim N$ in probability as $N \to \infty$.
Consequently, 
\begin{equation}\label{smalltypes_Renato}
\max_{\ell \neq \ell_\star} X_\ell^N(\mathscr T^N) \le \frac{N}{\sqrt{\log N}}.
\end{equation}
Thus, thanks to the assumptions (a)-(c) of Lemma~\ref{lem:multitype-sweep}, the {reordered} family sizes 
$$
\begin{aligned}
  \big( & \widetilde X^N_1(0), \widetilde X^N_2(0),  \widetilde X^N_3(0), \ldots, \widetilde X^N_k(0)\big) \\ & \qquad
   := \big(X^N_{\ell_\ast}(\mathscr T^N), X^N_1(\mathscr T^N), \ldots, X^N_{\ell_\ast-1}(\mathscr T^N), X^N_{\ell_\ast+1}(\mathscr T^N), \ldots,  X^N_k(\mathscr T^N)\big)
\end{aligned}
$$
  obey the conditions (i)-(iii) required for an initial state in Lemma~\ref{lem:multitype-without-kinks}. To see this, note that~\eqref{largetype_Renato} implies that $\widetilde X_1^N(0)$ fulfills condition (i), while~\eqref{smalltypes_Renato} together with~\eqref{boundforhN} implies that $\widetilde X_2(0), \ldots, \widetilde X_k^N(0)$ obey condition (ii). Finally, assumption (c) together with assertion~(2) of Lemma~\ref{lem:multitype-sweep} directly translate into condition (iii) of Lemma~\ref{lem:multitype-without-kinks}.
\end{remark}

Our next goal is to finish the analysis in the case of finitely many types, i.e., 
to show convergence of the (rescaled heights of the) Moran model with generator \eqref{genL} 
to a corresponding system of interacting trajectories, which in this case stabilizes in finite time. 
To this end, we will string together consecutive applications of Lemmas~\ref{lem:multitype-without-kinks} and~\ref{lem:multitype-sweep}, dealing respectively with the (macroscopic) stretches of time where the resident is fixed, and the (mesoscopic) stretches of time where the resident changes.
In addition, we will allow for a ``stop and restart'' at the arrival times of new mutants. 
Thanks to the coupling of the mutant arrivals in the prelimiting Moran systems via the times $T_i$ (see Remark~\ref{repres}), at the time of a new mutation we are with high probability faced with the situation at which precisely one clonal subpopulation has size $1$, while all the other clonal subpopulations that are alive at this time have limiting non-zero logarithmic frequencies in probability as $N\to \infty$ 
due to Lemma~\ref{lem:multitype-without-kinks}.
On the other hand, at a resident change time  the sizes of {\em all}  the subpopulations that are currently alive will increase to $\infty$ in probability as $N\to \infty$.  Thus, the ``stop and restart'' at a mutant arrival time will reflect in the assumption that $X_k^N(0) = 1$ for all $N$ in Proposition~\ref{fixedk}, while the restart just after a resident change time will correspond to the assumption that $X_k^N(0) \to \infty$ in probability as $N\to \infty$. These two cases then correspond to the conditions (B) and (A) in the assumptions of Lemma~\ref{lem:multitype-without-kinks}.

For the rest of this subsection, let the initial states in addition to \eqref{asinh1} and
\eqref{asinh2} obey
\begin{enumerate}
    \item[(C1)] $X^N_1(0) \geq N(1-1/\sqrt{\log N})$ \, (so $\eta_1=1$ and $X^N_\ell(0) \leq N/\sqrt{\log N}$ for $\ell =2, \ldots, k$)\,;
    \item[(C2)] $\eta_2,\ldots, \eta_{k-1} \in (0,1]$, and if $\eta_\ell = 1$ for some $\ell\in \{2,\ldots, k-1\}$, then $m_\ell < m_1$ ;
    \item[(C3)] $\eta_k=0$ and  $m_k > m_1$.
\end{enumerate}
We also recall the definition of $t^N$ and $B_\ell^N$ from~\eqref{defBN}.
Using the terminology introduced in Section~\ref{sec-IT}, but now with $\{1,\ldots , k\}$ instead of 
$\{-k+1,\ldots 0\}$ as the index set of $\aleph$, let
$$\widehat H^N:=  \Big((\widehat H_\ell(t))_{t\ge 0}\Big)_{1\le \ell \le k}$$
be  the system of interacting trajectories with starting configuration   
\[
\aleph := \big((1,0), (\eta_2,m_2-m_1), \ldots , (0,(m_k-m_1)B^N_k)\big)
\] 
(and $\beth := \emptyset$, i.e.\ no mutation arriving after time~0). Let  $\nu\, (\ge 0)$ denote the number of resident changes in $\widehat H^N$, and let $\tau_1<\cdots < \tau_\nu$ be the times of these resident changes.
Note that, because of their dependence on $B^N_k$, the quantities $\nu$ and $\tau_i$ are random variables, which also depend on $N$. For the sake of readability, we suppress this dependence in our notation.
Note further that, while $\nu$ is random, it can only take one of two integer values, one for each case $B^N_k=0$ or $1$. In particular, $\nu$ is almost surely bounded (with a deterministic bound that depends on the parameters).

\begin{prop}\label{fixedk}
Assume conditions \eqref{asinh1}, 
\eqref{asinh2} and (C1)--(C3) as above. Then
 \begin{equation}\label{unifconv} 
 \sup_{1\le \ell\le k} \sup_{0 \leq t \leq t_0}|H^N_\ell(t)-\widehat H_\ell^N(t)|
 \xrightarrow{N\to\infty}0 \quad \mbox{in probability for all } t_0 >0
 \end{equation}
  then
\vspace{-0.5em}
if $X_k^N(0) \to \infty$  in probability as $N\to \infty$
\begin{equation}\label{largeBN}
\lim_{N\to\infty}\P(B^N_k = 1) = 
1,
\end{equation}
whereas if  $X_k^N(0) = 1$ for all $N$, then
\begin{equation}\label{firstBN}
\lim_{N\to\infty}\P(B^N_k = 1) = 
\frac{m_k-m_1}{1+m_k-m_1},
\end{equation}
Moreover, for each $N$ there exist two sequences of random times $\tau^N_i, \sigma^N_i$, $0 \leq i \leq \nu$
satisfying $\tau^N_{i-1} \leq \sigma^N_{i-1} \leq \tau^N_{i}$ almost surely for all $1 \leq i \leq \nu$ and, with high probability,
\[
0 = \tau^N_0 = \sigma^N_0 <\tau_1^N < \sigma_1^N  < \tau_2^N < \sigma_2^N< \cdots <  \tau_{\nu}^N< \sigma_{\nu}^N  < \tau_{\nu+1}^N := \infty
\]
such that, as $N\to\infty$,
 \begin{enumerate}
 \item 
 $\max_{1 \leq i \leq \nu} |\tau_i^N - \tau_i| \to 0$ in probability, \\[-.5em]
  \item
    $\max_{1 \leq i \leq \nu}(\sigma^N_i - \tau_i^N)\1_{\{\tau^N_i<\infty\}} \to0$ in probability,\\[-.5em]
    \item 
    $\displaystyle \P\Bigg(\min_{0 \leq i \leq \nu}\inf_{\sigma_{i}^N < t < \tau_{i+1}^N} X_{\rho_i^{}}^N(t) \ge N\Big(1-\frac {2\overline g_N}{N}\Big)  \Bigg) \to 1$,\\
  where
    $\rho_i=\rho_i^N$ denotes the index of the resident in~$\widehat H^N$ during the time interval $[\tau_{i-1}, \tau_i)$, \, $i=1,\ldots, \nu$.
  \end{enumerate}
  \end{prop}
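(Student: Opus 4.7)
The proof proceeds by induction along the resident-change index $i = 0, 1, \ldots, \nu$, alternating an application of Lemma~\ref{lem:multitype-without-kinks} on each growth phase $[\sigma_{i}^N, \tau_{i+1}^N)$ with an application of Lemma~\ref{lem:multitype-sweep} on each short sweep phase $[\tau_{i+1}^N, \sigma_{i+1}^N]$. I set $\tau_{i+1}^N$ to be the first time after $\sigma_i^N$ at which the largest non-resident clonal family reaches size $\overline g_N$, and $\sigma_{i+1}^N := \tau_{i+1}^N + \mathscr T_{i+1}^N$, with $\mathscr T_{i+1}^N$ the sweep time defined as in~\eqref{defcurlyT} for the process restarted at $\tau_{i+1}^N$. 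In the base case $i=0$, the prescribed initial data satisfies conditions (i)--(iii) of Lemma~\ref{lem:multitype-without-kinks} by virtue of (C1)--(C3); part~(A) yields uniform convergence of $H^N_\ell$ to $h_\ell$ on $[0, \tau_1^N \wedge t_0]$, part~(B) yields~\eqref{firstBN} when $X^N_k(0) = 1$, and in the case $X^N_k(0) \to \infty$ the same branching-process sandwich from the proof of that lemma gives~\eqref{largeBN}, since the supercritical lower bound $Y^{K,N}_k$ now starts from a diverging initial size and therefore survives and exceeds $\log N$ by time $t^N$ with probability $1 - O\big((\delta^+_{k,K}/\beta^-_{k,K})^{X^N_k(0)}\big) \to 1$. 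Part~(C) of the same lemma then yields assertion~(1) for $i=1$, and (3) on $[0, \tau_1^N)$ is immediate from the definition of $\tau_1^N$ as a hitting time of $\overline g_N$.

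For the inductive step, assume (1)--(3) are established up to $\tau_i^N$. The induction hypothesis together with~\eqref{mainassertionC} ensures that at time $\tau_i^N$ the current resident $\rho_i$ still has size $\sim N$ (assumption~(a) of Lemma~\ref{lem:multitype-sweep}), the invader $\ell_\star$ that just reached $\overline g_N$ has log-frequency converging to~$1$ (assumption~(b)), and any other type $\ell$ that could have $\widehat H^N_\ell(\tau_i) = 1$ must satisfy $m_\ell < m_{\ell_\star}$, because otherwise $\tau_i$ would not be the first subsequent resident change (assumption~(c)). Lemma~\ref{lem:multitype-sweep}\,(1) then gives $\sigma_i^N - \tau_i^N \to 0$ in probability (assertion~(2)), while its part~(2), combined with Lemma~\ref{lem:linearbounds}, shows that the log-frequencies of all other types change only by $o(1)$ on $[\tau_i^N, \sigma_i^N]$. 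By Remark~\ref{concat}, the state at $\sigma_i^N$, after relabelling $\ell_\star$ as the new resident, meets conditions (i)--(iii) of Lemma~\ref{lem:multitype-without-kinks} with the fitness vector re-centred at $m_{\ell_\star}$. Re-applying that lemma therefore produces the next growth phase, on which the piecewise-linear prelimit $h_\ell$ has slope $m_\ell - m_{\ell_\star}$; these slopes coincide with the slopes of $\widehat H^N$ right after the kink at $\tau_i$, because the kinking rule subtracts $v^\ast = m_{\ell_\star} - m_{\rho_i}$ from every surviving slope and the pre-kink slopes were $m_\ell - m_{\rho_i}$. Concatenating the uniform convergences on the (at most $k-1$) successive growth phases with the vanishing-length sweep intervals in between gives~\eqref{unifconv} on any fixed $[0, t_0]$.

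The main obstacle is the verification of the compatibility conditions at each interface: at $\tau_i^N$ the hypotheses of Lemma~\ref{lem:multitype-sweep} must be derived from the prelimit state produced by the previous growth phase, and at $\sigma_i^N$ those of Lemma~\ref{lem:multitype-without-kinks} must hold with the fresh slopes exactly matching the slopes prescribed by the kinking rule of $\widehat H^N$. The slope-matching identity $(m_\ell - m_{\rho_i}) - (m_{\ell_\star} - m_{\rho_i}) = m_\ell - m_{\ell_\star}$, together with the size bound of Remark~\ref{concat} (which uses~\eqref{boundforhN} to guarantee that the former resident's size at $\sigma_i^N$ is $o(\overline g_N)$), is precisely what lets the induction propagate and ensures that the piecewise-linear trajectories produced inductively coincide with the $\widehat H^N$ of Definition~\ref{defdyn}, delivering assertions~(1)--(3) and~\eqref{unifconv} jointly.
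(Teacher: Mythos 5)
Your proposal follows essentially the same strategy as the paper: define $\tau^N_i$ as the first hitting time of $\overline g_N$ by a non-resident family, $\sigma^N_i$ as the end of the subsequent sweep, apply Lemma~\ref{lem:multitype-without-kinks} on the growth phases and Lemma~\ref{lem:multitype-sweep} on the sweeps, glue via Remark~\ref{concat}, and note the slope-matching identity. The paper inducts on $\nu$ rather than on the index $i$, but that is only a presentational difference.

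The one genuine omission concerns the case $X^N_k(0)=1$, where $B^N_k$ is random and hence $\nu$, $\tau_1$, and the entire comparison object $\widehat H^N$ are random as well. You apply Lemma~\ref{lem:multitype-without-kinks}(B) to get \eqref{firstBN}, which is correct, but you then proceed with the concatenation as if the state at $\tau^N_1$ were deterministic. The paper is careful here: it first establishes the proposition in the two cases $X^N_k(0)=0$ and $X^N_k(0)\to\infty$ (where $B^N_k$, and hence $\nu$, is deterministic and the induction on $\nu$ is clean), and then treats $X^N_k(0)=1$ separately by applying Lemma~\ref{lem:linearbounds} on $[0,t^N]$ to show that heights barely move, observing that $B^N_k$ is $(X^N(t))_{t\leq t^N}$-measurable, and applying the Markov property at $t^N$ to reduce to one of the two already-handled deterministic cases (either $X^N_k(t^N)=0$ or $X^N_k(t^N)\geq\log N\to\infty$). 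Your argument would work if you conditioned on $B^N_k$, but as written, the verification that the state at the first sweep satisfies the assumptions (a)--(c) of Lemma~\ref{lem:multitype-sweep} is done for a random configuration without explaining why the phase decomposition is well-defined on each of the two events $\{B^N_k=0\}$ and $\{B^N_k=1\}$. This is a small but real gap; the fix is the Markov-property reduction that the paper uses.
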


\begin{figure}[htb]  \includegraphics[width=6.967cm,trim={0, 1cm, 0, 2cm},clip]{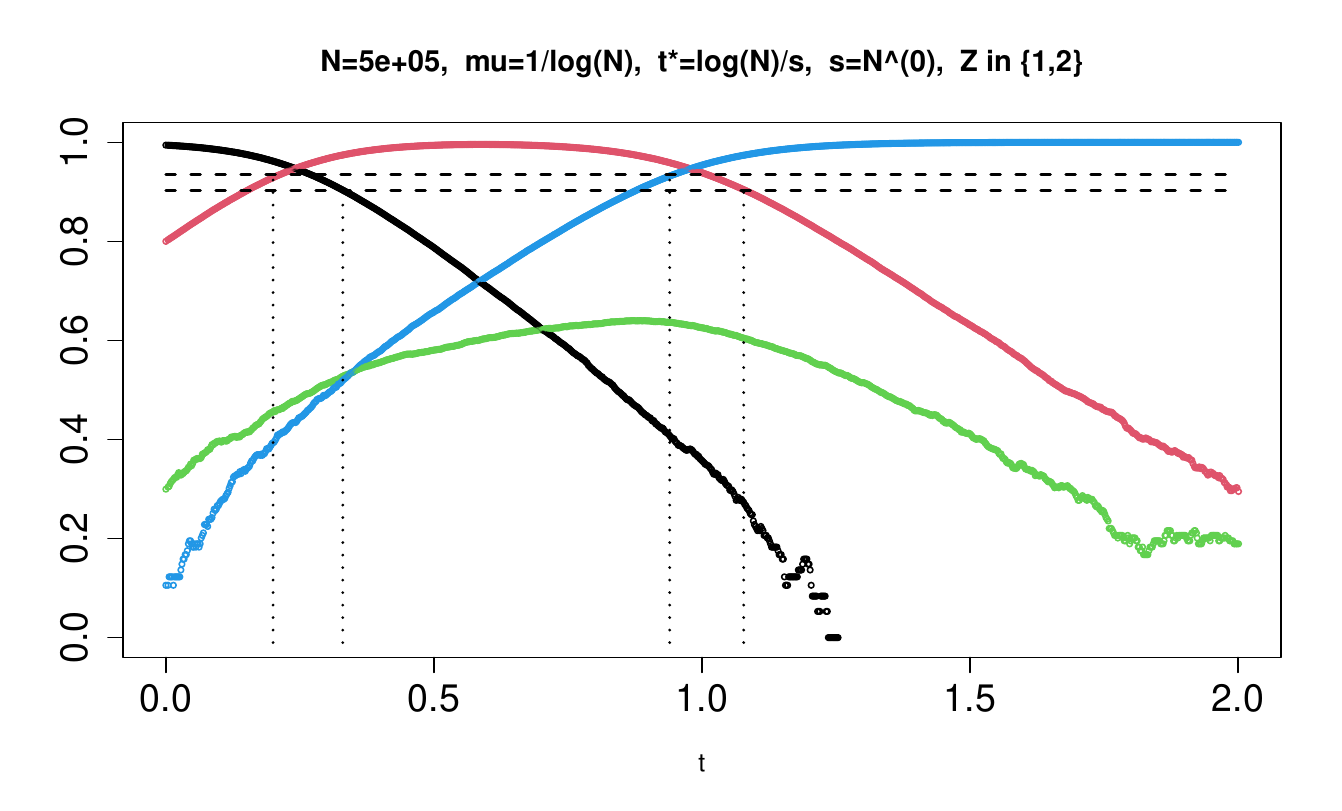}
  \includegraphics[width=6.967cm,trim={0, 1cm, 0, 2cm},clip]{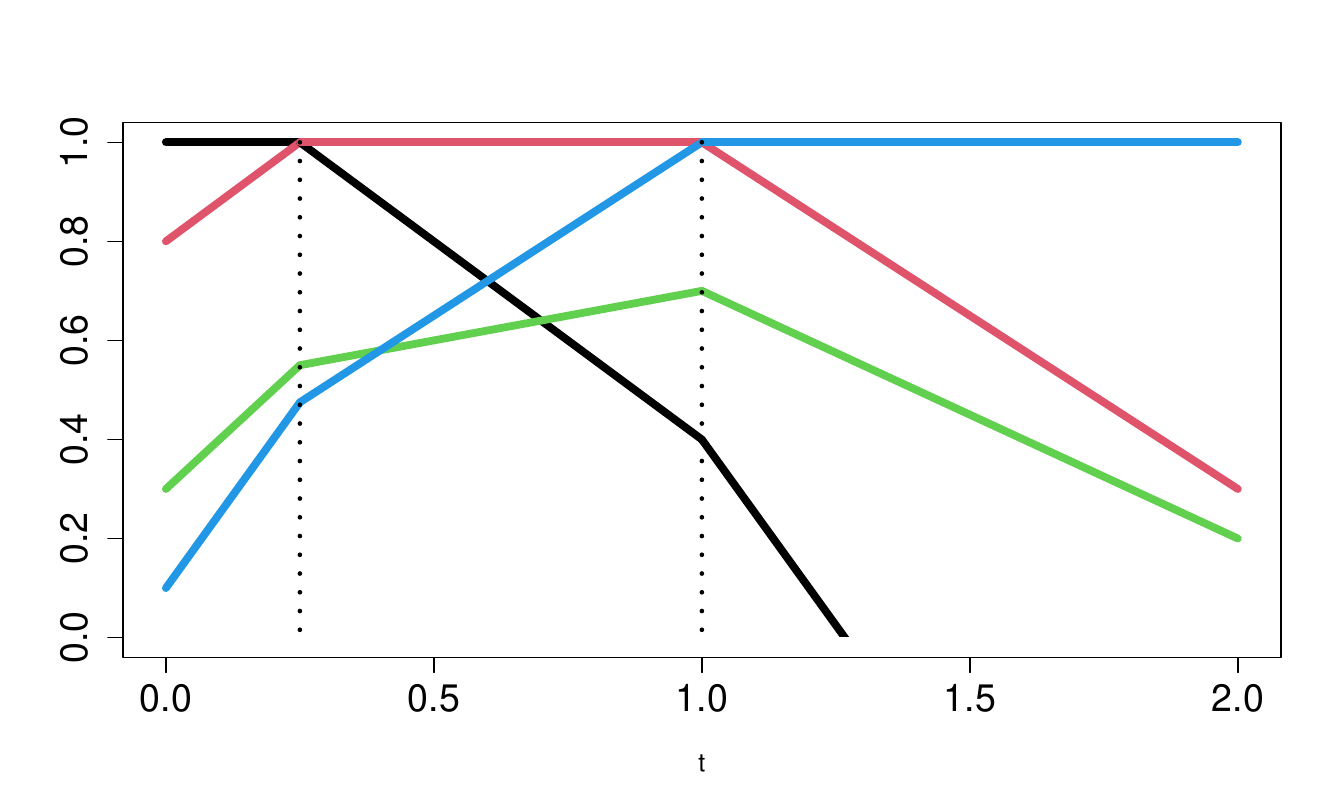}
\caption{\label{fig:THEproof-illustration}
  Illustration of Proposition~\ref{fixedk}.
  Left: Moran model, $N=500\,000$, without mutations, with initial state
    $(X^N_1(0),\ldots,X^N_4(0))
      = (N-\lfloor N^{0.8}\rfloor-\lfloor N^{0.3}\rfloor-\lfloor N^{0.1}\rfloor,
          \lfloor N^{0.8}\rfloor,\lfloor N^{0.3}\rfloor,\lfloor N^{0.1}\rfloor)$
  and $(m_1,\ldots,m_4)=(0,0.8,1,1.5)$; horizontals: $1-\frac{\log\log N}{2\log N}$ resp.\ $\overline h_N=1-\frac{\log\log N}{3\log N}$;
  verticals mark $\tau_1^N, \sigma_1^N, \tau_2^N, \sigma_2^N$ from left to right.
  Right: Corresponding PIT $\mathbb H(((0.1,1.5),(0.3,1),(0.8,0.8),(1,0)),\emptyset)$; dotted verticals give $\tau_1=0.25$ and $\tau_2=1$ respectively.
}
\end{figure}
  
  \begin{proof}
  Consider first the cases where either $X_k^N(0) =0$ for all $N$ or $X_k^N(0) \to \infty$ as $N\to \infty$. 
  In the first case $B_k^N=0$ deterministically, while in the second case a comparison with the branching process $Y_k^K$ defined in the proof of Lemma~\ref{lem:multitype-without-kinks} shows~\eqref{largeBN}. 
  Thus we may and will assume that, in these cases, $B^N_k$ is deterministically substituted by $0$ or $1$ in the definition of $\widehat{H}^N$.

In particular, $\nu$ is deterministic, and we may verify the assertions 1., 2. and 3. for each $1 \leq i \leq \nu$ separately. Let us prove the lemma in these cases by induction in $\nu$.  
With a view on~\eqref{defbarT}, define
\[ \tau_1^N
      := \inf\big\{t \geq 0\big| \max_{\ell\geq2}H_\ell^N(t)\geq \overline h_N\big\}.
\vspace{-0.5em}
\]
Part (C) of Lemma~\ref{lem:multitype-without-kinks} yields
  \begin{equation}\label{e:pr_lemfixedk_1}
  | \tau_1^N -\tau_1^{} | \1_{\{\tau_1^{} < \infty\}} \to 0 
  \ \text{ and } \ 
  | \1_{\{\tau_1^N = \infty\}}-\1_{\{\tau_1^{} = \infty\}} |
  \xrightarrow{N\to\infty} 0
  \quad \mbox{ in probability},
  \end{equation}
while parts (A) and (B) yield
\begin{equation}\label{e:pr_lemfixedk_2}
 \max_{1\le \ell\le k} \sup_{0\le t \leq \tau_1^N \wedge t_0}|H^N_\ell(t)-\widehat H_\ell^N(t)|
 \xrightarrow{N\to\infty}0 
 \quad \mbox{in probability},   
\end{equation}
which is the claimed convergence~\eqref{unifconv} restricted to $[0,\tau_1^N]$. 
This verifies the case $\nu=0$ since then $\tau^N_1 = \tau_1=\infty$ w.h.p.
Note that, under our assumptions, 
$\tau_1 < \infty$ exactly when $m_\ell>m_1$ for some $\ell = 2, \ldots, k-1$ 
or $B^N_k=1$; in particular, $\nu=0$ is not possible when $X^N_k(0) \to \infty$.

Assume thus that the statement is true for some $\nu_0 \geq 0$,
and let $\nu = \nu_0+1$. Then both $\tau_1$ and $\tau^N_1$  are finite w.h.p.,
and \eqref{unifconv} restricted to $[0,\tau^N_1]$ as well as claims 1. and 3. of the lemma for $i=1$ follow by \eqref{e:pr_lemfixedk_1}--\eqref{e:pr_lemfixedk_2} (note that $\rho_1=1$). 

Next we are going to define $\sigma_1^N$ on $\{\tau_1^N < \infty\}$.  
Thanks to the pairwise distinctness condition~\eqref{asinh1} and parts (A) and (B) of Lemma~\ref{lem:multitype-without-kinks}, in this case w.h.p. there are no two different types $\ell, \ell' \in \{2,\ldots, k\}$  with $H^N_\ell(\tau_1^N) \ge \overline h_N$, $H^N_{\ell'}(\tau_1^N) \ge \overline h_N$ and $m_\ell = m_{\ell'}$.  
This guarantees that
the assumptions of Lemma~\ref{lem:multitype-sweep} are satisfied with $X^N(\tau_1^N)$ in place of $X^N(0)$, i.e., with the time origin shifted to $\tau_1^N$.
Denote by $\ell_\star \in \{2,\ldots, k\}$ the (w.h.p.)  unique index for which $H^N_{\ell_\star}(\tau^N_1) \geq \overline{h}_N$  and $m_\ell<m_{\ell_\star}$ for any $\ell \neq \ell_\star$ such that $H^N_{\ell}(\tau^N_1) \geq \overline{h}_N$. With a view on \eqref{defcurlyT}, we define
      $$
      \sigma_1^N
      := \inf\big\{t \geq \tau^N_1 \big| X^N_{\ell_\star}(t) \geq N(1-\tfrac 1{\sqrt{\log N}})\big\}. 
      $$
  By the strong Markov property, we can apply Lemma~\ref{lem:multitype-sweep} with initial condition $X^N(\tau_1^N)$, obtaining
   \begin{equation}\label{firstclose}
   (\sigma^N_1 - \tau_1^N)\1_{\{\tau^N_1<\infty\}} \xrightarrow{N\to\infty} 0 \quad \mbox {in probability}
   \end{equation}
   which is the claimed assertion 2. for $i=1$, and also
 \begin{equation}\label{firstkink}
 \max_{1\le \ell \le k} \sup_{t \in [\tau^N_1, \sigma^N_1]}\big | H_\ell^N(t)- H_\ell^N(\tau_1^N)\big | \1_{\{\tau_1^N < \infty\}} 
 \xrightarrow{N\to\infty} 0 \quad \mbox{in probability.}
 \end{equation}
  Swapping the indices $1$ and $\ell_\star$, we obtain a new process $\widetilde{X}^N$ such that
  $\widetilde{X}^N_1 = X^N_{\ell_\star}$ and to which we can apply our induction hypothesis after shifting time by 
  $\sigma^N_1$,  yielding \eqref{unifconv} for $t \geq \sigma^N_1$ as well as further ordered random times $\tau^N_i, \sigma^N_i$ and assertions 1., 2. and 3. for $2 \leq i \leq \nu$.
  This finishes the induction step and the proof in the cases where either $X_k^N(0) =0$ for all $N$ or  $X_k^N(0) \to \infty$ as $N\to \infty$.

Consider now the case in which $X_k^N(0) = 1$ for all $N$, and recall that $t^N = 1/\sqrt{\log N}$.
Note that, with high probability, $t^N<\tau^N_1$ and, by Lemma~\ref{lem:multitype-without-kinks}(B), either $X^N_k(t^N)\geq \log N$ or $X^N_k(t^N)=0$, corresponding to $B^N_k=1$ or $B^N_k=0$. Also, the claimed convergence \eqref{firstBN} follows from part (B) of Lemma~\ref{lem:multitype-without-kinks}.
Lemma~\ref{lem:linearbounds} shows that
\[
\sup_{1 \leq \ell \leq k} \sup_{0 \leq t \leq t^N}|H^N_\ell(t) - \eta_\ell | \xrightarrow{N\to\infty} 0 \text{ in probability.}
\]
Since $B^N_k$ is measurable with respect to $(X^N(t))_{t \leq t^N}$,
we may apply the Markov property at time $t^N$ and use the proposition in one of the previously treated cases $X^N_k=0$ for all $N$ or $X^N_k \to \infty$ as $N\to\infty$ for the remaining time. This concludes the proof.
  \end{proof}
Proposition~\ref{fixedk} is illustrated by Figure~\ref{fig:THEproof-illustration}.

 The following asymptotic description
 of a selective sweep in the 2-type Moran model
under logarithmic scaling is a straightforward consequence of Proposition~\ref{fixedk} with $k=2$. 
\begin{cor}[Scaled sweep with two types]\label{cor:full-sweep}
  For $N\in\N$, $m_1 :=0$, $m_2:= s>0$, let $X^N=(X_1^N, X_2^N) $ be the Markov process on $\cS_N^2$ started at
  $X^N(0)=(N-1,1)$ with generator \eqref{genL} (where $k:=2$).
  Again, let $H^N_\ell$ be defined by \eqref{defH}, and let $$  h_1(t)
      :=        ((2-st)\wedge 1)^+
      = \begin{cases}
          1      & \text{if }t\in[0,\frac1s),\\
          1 - s(t-\frac1s) & \text{if }t\in[\frac1s,\frac2s),\\
          0      & \text{if }t\geq\frac2s,
        \end{cases}\quad \quad  h_2(t)=st\wedge1.
  $$
  Then, there is a sequence of events $E_N$ with probabilities tending to $\frac s{1+s}$ as $N\to \infty$ such that the
  following convergences hold in probability, uniformly in $t$ in compact subsets of $[0,\infty)$, 
  \begin{enumerate}
   \item
    $\displaystyle
      \big|H_1^N(t)-(\1_{E_N^c}+\1_{E_N}h_1(t))\big|
        \xrightarrow{N\to\infty} 0.
    $
  \item
    $\displaystyle
     \big|H_2^N(t)-\1_{E_N}h_2(t)\big|
        \xrightarrow{N\to\infty} 0.
    $
  \end{enumerate}
\end{cor}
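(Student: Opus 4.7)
The plan is to deduce the corollary as a direct specialisation of Proposition~\ref{fixedk} to $k=2$, with $m_1=0$, $m_2=s>0$ and starting state $X^N(0)=(N-1,1)$. First I would verify the hypotheses: the pairs $(\eta_1,m_1)=(1,0)$ and $(\eta_2,m_2)=(0,s)$ are distinct and the (deterministic) initial frequencies trivially satisfy \eqref{asinh1}--\eqref{asinh2}; condition (C1) is immediate from $X_1^N(0)=N-1$; (C2) is vacuous since the index set $\{2,\ldots,k-1\}$ is empty; and (C3) follows from $\eta_2=0$ and $m_2=s>m_1=0$. Since $X_k^N(0)=1$, the relevant conclusion of Proposition~\ref{fixedk} is \eqref{firstBN}, which gives $\P(B_2^N=1)\to s/(1+s)$, so taking $E_N:=\{B_2^N=1\}$ reproduces the limiting probability stated in the corollary.

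Next I would compute the auxiliary trajectory system $\widehat H^N$ explicitly, from its defining configuration $\aleph=((1,0),(0,sB_2^N))$ with $\beth=\emptyset$, separately in each of the two cases for $B_2^N$. On $E_N^c$, trajectory $\widehat H_2^N$ starts at height $0$ with slope $0$ and is absorbed, while $\widehat H_1^N$ remains at height $1$ with slope $0$; hence $(\widehat H_1^N,\widehat H_2^N)\equiv(1,0)$. On $E_N$, $\widehat H_2^N$ grows linearly with slope $s$ and reaches height $1$ at time $1/s$, triggering the kinking rule with $v^\ast=s$: the slope of $\widehat H_1^N$ drops from $0$ to $-s$, and that of $\widehat H_2^N$ from $s$ to $0$. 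So $\widehat H_1^N$ then decreases linearly until being absorbed at $0$ at time $2/s$, and $\widehat H_2^N$ stays at height $1$; this matches exactly $(h_1,h_2)$.

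Combining both cases I obtain the identities
$$
\widehat H_1^N = \mathbf 1_{E_N^c} + \mathbf 1_{E_N}\, h_1, \qquad \widehat H_2^N = \mathbf 1_{E_N}\, h_2,
$$
as processes, so the two desired convergences are precisely the uniform convergence~\eqref{unifconv} on compact time intervals provided by Proposition~\ref{fixedk}. Since the corollary is explicitly flagged as a straightforward consequence of that proposition, I do not expect any genuine obstacle; the only care needed is to identify the event $E_N$ appearing in the corollary with the Bernoulli random variable $B_2^N$ that governs the auxiliary trajectory system, and to check (as above) that the kinking rule reproduces the two-piece-linear profile $h_1$ on the survival event.
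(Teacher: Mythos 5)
Your proposal is correct and matches the paper's approach: the paper states only that the corollary is ``a straightforward consequence of Proposition~\ref{fixedk} with $k=2$,'' and your verification of hypotheses (C1)--(C3), identification $E_N=\{B_2^N=1\}$ via \eqref{firstBN}, explicit computation of $\widehat H^N$ in the two cases $B_2^N\in\{0,1\}$, and conclusion from \eqref{unifconv} is precisely the intended (unwritten) argument.
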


\subsection{Adding one new mutation}\label{sec:newmut}   Let $X_1^N, \ldots, X_k^N$ be as in Proposition~\ref{fixedk}. Let $T$ be Exp$(\lambda)$-distributed and $A$ have distribution $\gamma$. Assume that $T$ and $A$ are independent of each other and of everything else. At time $T$, choose an individual uniformly at random from the Moran$(N)$-population and add the value~$A$ to its fitness.  Denoting the index of the family of the randomly picked individual by $\Lambda_N$ and assigning the index $k+1$ to a new family founded by this individual, we thus have a process $\widetilde X^N$ which up to time $T-$ coincides with $X^N$ and whose state at time $T$ is defined as
$$(\widetilde X_1^N(T),\ldots, \widetilde X_{\Lambda_N}^N(T), \ldots,\widetilde X_k^N(T)):= (X_1^N(T-),\ldots,X_{\Lambda_N}^N(T-)-1 , \ldots, X_k^N(T-)),$$ $$\widetilde X_{k+1}^N(T):= 1, \quad
M^N_{k+1} := m_{\Lambda_N}^{}+A.$$
For $t \ge T$, let $\widetilde X^N$ follow the dynamics \eqref{genL}, with $k+1$ in place of $k$. For  convenience we extend  $\widetilde{X}^N_{k+1}$ to the entire positive time axis by setting it to be $0$ for $t < T$. Let $\widetilde H^N$ be the process of logarithmic type frequencies of $\widetilde X^N$ defined as in \eqref{defH}.
 Let 
\begin{equation}
\widetilde B^N:=\1_{\{\widetilde X_{k+1}^{N}(T+t^N) \geq \log N\}}
\quad \text{ where } \quad
t^N = \tfrac{1}{\sqrt{\log N}},
\end{equation}
and re-define the system $\widehat H^N$ from Proposition~\ref{fixedk} by adding a trajectory $\widehat H_{k+1}^N$ that is $0$ for $t\le T$,  starts at time $T$ at height $0$ with slope $A\widetilde B^N$, and then interacts with the other trajectories of $\widehat H^N$ in the way described in Section~\ref{PIT}. 
Let 
\begin{equation}
\label{defrhoN}
\rho^N(T):= {\rm argmax}\{m_\ell\mid   1\le \ell \le k \mbox{ with } \widehat H^N_\ell(T) = 1 \},
\end{equation}
i.e.\ the index of the family which is resident at time $T$ in the PIT $\widehat H^N$.
\begin{lemma}\label{newbornmut}
We have as $N\to \infty$
 \begin{equation}\label{nextmalthusian}
  \P(M_{k+1}^N= m_{\rho^N(T)} + A) \quad \to 1,   
 \end{equation}
 \begin{equation}
 \label{convBN1}
 \P\big(\widetilde B^N=1 \mid A\big) \to \frac A{1+A} \quad \mbox{a.s.}
 \end{equation}
 \begin{equation}\label{unifconvnew} 
 \sup_{1\le \ell\le {k+1}} \sup_{0 \leq t \leq t_0}|\widetilde H^N_\ell(t)-\widehat H_\ell^N(t)|\to0 \quad \mbox{in probability for all } t_0>0.
 \end{equation}  
\end{lemma}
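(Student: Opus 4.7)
The plan is to leverage the strong Markov property at time $T$ together with two applications of Proposition~\ref{fixedk}: one on $[0,T]$ for the original $k$-type system, and one on $[T,t_0]$ for a suitably relabelled $(k+1)$-type system initiated at time $T$.

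For \eqref{nextmalthusian}, I would argue as follows. By Proposition~\ref{fixedk}(3) applied to the original $k$-type system, with probability tending to one the resident's frequency $X_{\rho_i^N}^N(t)/N$ exceeds $1-2\overline g_N/N$ throughout each interval $(\sigma_i^N, \tau_{i+1}^N)$, while Proposition~\ref{fixedk}(2) gives $\sum_{i=1}^{\nu}(\sigma_i^N-\tau_i^N) \to 0$ in probability. Since $T$ is independent of $X^N$ with a continuous distribution, w.h.p.\ $T$ avoids the union of resident-change intervals, so $X_{\rho^N(T)}^N(T)/N \to 1$ in probability. Hence the individual $\Lambda_N$ sampled uniformly at time $T$ satisfies $\P(\Lambda_N = \rho^N(T)) \to 1$, which gives $M_{k+1}^N = m_{\Lambda_N} + A = m_{\rho^N(T)} + A$ with high probability.

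For \eqref{convBN1}, I would condition on $A=a$ and use the strong Markov property at $T$. On the good event from the previous step, the state at $T$ has a near-macroscopic resident and all other surviving types mesoscopic; the newborn family (index $k+1$) has size $1$ and fitness exceeding the current resident's by $a>0$. Relabelling the resident as ``type $1$'' and discarding already extinct types, Lemma~\ref{lem:multitype-without-kinks}(B) applied from time $T$ over the window of length $t^N$ yields $\P(\widetilde B^N = 1 \mid A = a) \to a/(1+a)$ for each $a$ in the support of $\gamma$. Since $A$ is independent of everything up to time $T$, the pointwise convergence in $a$ translates into the a.s.\ statement \eqref{convBN1}.

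For \eqref{unifconvnew}, I would split the supremum at $T$. On $[0,T]$ one has $\widetilde H_\ell^N = H_\ell^N$ for $\ell \le k$ and $\widetilde H_{k+1}^N \equiv 0 \equiv \widehat H_{k+1}^N$, so Proposition~\ref{fixedk} for the original system yields the uniform estimate. On $[T,t_0]$, the strong Markov property at $T$ lets me apply Proposition~\ref{fixedk} to the $(k+1)$-type system restarted at $T$: the resident $\rho^N(T)$ plays the role of ``type $1$'' and satisfies (C1) by Step~1; the surviving mesoscopic types satisfy (C2) while extinct types remain at $0$ by Lemma~\ref{lem:gw-subcritical} and can be ignored; the newborn family of size $1$ with fitness advantage $A>0$ plays the role of the last type, satisfying (C3). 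The resulting ``survival indicator'' for this restarted system is precisely $\widetilde B^N$, and the corresponding limiting interactive system coincides with $\widehat H^N$ on $[T,t_0]$ by construction of the PIT dynamics. The main obstacle will be a careful verification that the random state at time $T$ meets (C1)--(C3) and the pairwise-distinctness condition~\eqref{asinh1} in the restarted labelling, which requires combining the independence of $T$ from $X^N$ to discard the resident-change intervals with the measurability of the restart indices through the strong Markov property.
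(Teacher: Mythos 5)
Your proposal follows essentially the same route as the paper's proof: both derive $\P(\Lambda_N=\rho^N(T))\to1$ from parts 2 and 3 of Proposition~\ref{fixedk} together with the independence of $T$ (hence $T$ avoids the vanishingly short competition windows), obtain~\eqref{convBN1} from the restart and the survival-probability statement~\eqref{firstBN} (which is where Lemma~\ref{lem:multitype-without-kinks}(B) enters), and prove~\eqref{unifconvnew} by a twofold application of Proposition~\ref{fixedk} split at the random time $T$. The only difference is one of exposition: you spell out the verification of (C1)--(C3) after the restart and flag the need to discard already-extinct types and check pairwise distinctness, whereas the paper leaves these routine checks implicit.
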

\begin{proof}
According to the statements 2. and 3. in Proposition~\ref{fixedk} we have
\begin{equation}\label{convLambda}
\P(\Lambda_N = \rho^N(T)) \to 1 \quad \mbox{ as } N\to \infty.
\end{equation}
(Recall that $\Lambda_N$ is the type of the mutant individual at time $T$ prior to its mutation,  and $\rho^N(T)$ is defined by~\eqref{defrhoN}.)
The  convergence \eqref{nextmalthusian} thus follows from the definition of $M^N_{k+1}$, and the  convergence~\eqref{convBN1} follows from~\eqref{firstBN}. Finally, \eqref{unifconvnew} follows from a twofold application of Proposition~\ref{fixedk}, first by restricting \eqref{unifconv} to $[0,T]$ and then by applying Proposition~\ref{fixedk} on the interval $[T,\infty)$ to $\widetilde X^N$ now with $k+1$ instead of $k$ types, and with the above described initial states   $\widetilde X^N(T)$.
\end{proof}
\subsection{Completion of the proof of Theorem~\ref{theorem-THE}}\label{sec:completion}  We now revert to the definition of $(\mathscr X^N, \mathscr M^N, \mathscr I^N)$ as in Remark~\ref{repres} in Section~\ref{sec-model} . Let $H^N$ be as in \eqref{defH}.
We define

\[
 B^N_i
  := \1_{\{\mathscr X^N_i(T_i+t^N) \geq \log N \}}
  \quad \text{ where } \quad
  t^N= \tfrac{1}{\sqrt{\log N}}.
\]
Let $\widehat H^N$ be the PIT with initial state $((1,0),(0,0),(0,0),\ldots)$, and with new trajectories born at times $T_i$ with initial slope $A_i B^N_i$.
For  $i=1,2,\ldots$, let
$\rho^N(T_i)$ be the type that is resident in $\widehat H^N$ at time~$T_i$, i.e. that index $J < i$ for which $\widehat H_J^N(T_i)=1$.
(Note that by construction $\rho^N(T_i)$ is a.s.\ well-defined.)
We define recursively
\begin{equation}
    \label{updateMN}
    \widehat M_i^N := \widehat M^N_{\rho^N(T_i)}+A_i, \qquad \widehat M^N_0 := 0.
\end{equation}
We now state a ``quenched'' version of 
Theorem~\ref{theorem-THE}.
\begin{prop}\label{convprob}
Conditionally given $(T_i, A_i)_{i \in \N}$, for all $i=1,2,\ldots$,
\begin{equation}\label{convprobHN}
\sup_{0\le \ell< i} \sup_{0\le t\le T_i}|H^N_\ell(t)-\widehat H_\ell^N(t)|\to0 \quad \mbox{in probability} \quad \mbox{as } N\to \infty
\end{equation} and
\begin{equation}
\label{convprobMN}
    \P((M_0^N,\ldots, M_i^N) = (\widehat M_0^N,\ldots, \widehat M_i^N)) \to 1 \qquad \mbox{as } N\to \infty. 
    \end{equation}
\end{prop}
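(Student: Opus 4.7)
We proceed by induction on $i \in \N$, conditionally on the random input $(T_j, A_j)_{j \in \N}$.

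\emph{Base case $i=1$.} Before time $T_1$ no mutations occur (via the coupling of $\mathscr I^N$ with the Poisson times $(T_j)$ described in Remark~\ref{repres}), so the Moran population is monomorphic of type $0$; hence $H^N_0 \equiv 1$ and $H^N_\ell \equiv 0$ for $\ell \geq 1$ on $[0,T_1)$, which coincides identically with $\widehat H^N$ there. Applying Lemma~\ref{newbornmut} with $k=1$ (whose hypotheses are trivially met with $(\eta_1,m_1)=(1,0)$) at the first mutation time $T_1$ then yields the uniform closeness on any bounded interval, giving~\eqref{convprobHN} for $i=1$, together with $M_1^N = A_1 = \widehat M_1^N$ with probability tending to $1$ in view of~\eqref{nextmalthusian} and the fact that $\rho^N(T_1)=0$ and $\widehat M_0^N=0$.

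\emph{Induction step.} Assume that both~\eqref{convprobHN} and~\eqref{convprobMN} hold at level $i-1$. By the strong Markov property of $(\mathscr X^N,\mathscr M^N,\mathscr I^N)$ at time $T_{i-1}$, the restarted system is a Moran model whose initial configuration, on an event $\mathcal E^N_{i-1}$ of probability tending to $1$, matches (up to a vanishing error) the state of the PIT $\widehat H^N$ at $T_{i-1}$, with the freshly arrived $(i-1)$-th mutant being a single individual of type $i-1$. Iteratively applying Lemma~\ref{newbornmut} (which internally invokes Proposition~\ref{fixedk} to propagate the mutation-free competitive dynamics on $[T_{i-1},T_i]$ and then treats the arrival of the next mutation at $T_i$), after a suitable relabeling of types so that the most recent mutant occupies the last index, we extend the uniform closeness of $H^N$ to $\widehat H^N$ past $T_i$. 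As for the fitness labels, by~\eqref{convLambda} applied in the $i$-type setting, the individual sampled to mutate at $T_i$ has w.h.p. the resident type $\rho^N(T_i)$ in $\widehat H^N$, so
\[
  M_i^N \;=\; M^N_{\rho^N(T_i)} + A_i
        \;=\; \widehat M^N_{\rho^N(T_i)} + A_i
        \;=\; \widehat M_i^N
\]
with probability tending to $1$, where the middle equality uses the induction hypothesis and the fact that $\rho^N(T_i)\in\{0,\ldots,i-1\}$. This yields~\eqref{convprobMN} at level $i$.

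\emph{Main obstacle.} The technical heart of the argument is the careful bookkeeping at each restart: one must verify that the restarted Moran configuration at $T_{i-1}$ satisfies the hypotheses (C1)-(C3) of Proposition~\ref{fixedk} after relabeling the types so that the current resident becomes index $1$, the most recent mutant becomes index $k$, and types that have gone extinct in $\widehat H^N$ are discarded (they have vanishing logarithmic frequency in the Moran model by Lemma~\ref{lem:multitype-without-kinks}(A)). Moreover, the generic pairwise distinctness~\eqref{asinh1} may fail when $\gamma$ carries atoms (as in the deterministic case $\gamma=\delta_c$), which can be handled by appending the type index as a symbolic tiebreaker. Finally, since the restarted initial state is only approximately equal to the PIT configuration, one needs a stability/continuity argument in the initial data for the interactive dynamics; this is delivered by the uniform-in-initial-condition bounds built into Lemmas~\ref{lem:linearbounds},~\ref{lem:multitype-without-kinks} and~\ref{lem:multitype-sweep} and by the stop-and-restart structure codified in Proposition~\ref{fixedk}.
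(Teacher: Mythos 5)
Your proof is correct and follows exactly the paper's approach: the paper's entire proof reads ``This follows from Lemma~\ref{newbornmut} by induction over $i$,'' which is precisely the induction you spell out. One clarification regarding your ``main obstacle'': the pairwise distinctness~\eqref{asinh1} concerns the pairs $(\eta_\ell,m_\ell)$, not the fitness values alone, and since at any time two live trajectories with equal heights have slopes differing by exactly their fitness difference (cf.\ Lemma~\ref{ITunique}(a) together with Lemma~\ref{lemFM}b)), the condition holds almost surely at each restart even when $\gamma$ has atoms, so the symbolic tiebreaker you suggest is not needed.
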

\begin{proof}
    This follows from Lemma~\ref{newbornmut} by induction over $i$.
\end{proof}
For all $i=1,2,\ldots$, let $B_i$ be mixed Bernoulli with random parameter $\frac {A_i}{1+A_i}$, i.e.
$$\P(B_i =1\mid A_i) = \frac {A_i}{1+A_i}.$$
Let $\mathscr H = (H_i)_{i\in \N_0}$ be the PIT$(\lambda, \gamma)$ as defined in Section~\ref{PIT}.
For  $i=1,2,\ldots$, let
$\rho(T_i)$ be the resident type in $\mathscr H$ at time $T_i$ (as introduced in Definition~\ref{defrchPIT}),
and let $M_i$ be defined as in~\eqref{defMi}.
\begin{prop}\label{prop-updateM} For all $i=1,2,\ldots$  and all $t_0 > 0$, as $N\to \infty$,
\begin{align}
    \label{convBN}
      (B_1^N, \ldots,B_i^N)
       &\xrightarrow{\,\,d\,\,} (B_1, \ldots, B_i),
    \\[.5em]
    \label{convHN}
      \big(\widehat H^N(t)\big)_{0\le t \le t_0}
       &\xrightarrow{\,\,d\,\,} \big( H(t)\big)_{0\le t \le t_0} \, \, 
          \mbox{\rm as random elements of }\big(\mathcal D([0,t_0], [0,1])\big)^{\N_0},
    \\[.5em]
    \label{convrhoN}
      (\rho^N(T_1), \ldots,\rho^N(T_i))
       &\xrightarrow{\,\,d\,\,} (\rho(T_1), \ldots,\rho(T_i)),
    \\[.5em]
    \label{convMN}
      (\widehat M_0^N,\ldots, \widehat M_i^N)
       &\xrightarrow{\,\,d\,\,} (M_0,\ldots, M_i).
    \end{align}
Moreover, for each $i$ the above convergences occur jointly.
\end{prop}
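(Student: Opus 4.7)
I would prove the proposition by induction on $i$, treating the four convergences jointly. The key observation is that $\mathscr H, (\rho(T_j))_j, (M_j)_j$ are deterministic functions of the input tuple $(T_j, A_j, B_j)_j$, via the PIT dynamics of Definitions~\ref{PITdyn} and~\ref{defdyn}, and the same deterministic map (applied to $(T_j, A_j, B_j^N)_j$) produces $\widehat H^N, (\rho^N(T_j))_j, (\widehat M_j^N)_j$. Since the inputs $(T_j, A_j)$ are shared by coupling, it suffices to prove (i) the joint convergence $(B_1^N, \ldots, B_{i^\ast}^N) \xrightarrow{d} (B_1, \ldots, B_{i^\ast})$ for some (random) $i^\ast \geq i$ large enough to include all mutations with $T_j \leq t_0$, and (ii) the a.s.\ continuity of the PIT map at generic input configurations.

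\textbf{Joint convergence of the $B_j^N$'s.} For (i), I would argue by induction on $j$ that
\begin{equation*}
  \P(B_j^N = 1 \mid \mathcal F_{j-1}^N, A_j) \xrightarrow{N\to\infty} \tfrac{A_j}{1+A_j} \quad \mbox{in probability,}
\end{equation*}
where $\mathcal F_{j-1}^N$ is generated by the Moran process up to time $T_{j-1} + t^N$. The base case $j=1$ follows from Lemma~\ref{newbornmut} applied to the monomorphic initial state. For the inductive step, between $T_{j-1}+t^N$ and $T_j$ no new mutation arrives, so Proposition~\ref{fixedk} (invoking Remark~\ref{concat} for the type-reorderings at intermediate resident changes) ensures that the Moran state just before $T_j$ satisfies conditions (C1)--(C3) w.h.p., with resident $\rho^N(T_j)$; applying Lemma~\ref{newbornmut} at time $T_j$ then yields the displayed convergence. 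Since $B_1^N, \ldots, B_{j-1}^N$ are $\mathcal F_{j-1}^N$-measurable, iterating gives that, conditionally on $(A_j)_j$, the vector $(B_1^N, \ldots, B_i^N)$ becomes asymptotically independent with Bernoulli marginals of parameters $A_j/(1+A_j)$, matching the law of $(B_1, \ldots, B_i)$; bounded convergence then yields the joint weak convergence~\eqref{convBN}, including the marginals in $(T_j, A_j)$.

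\textbf{Continuous mapping and main obstacle.} The PIT map from inputs to trajectories, resident types and fitness values is continuous at every input configuration outside a set of ``degenerate'' configurations (pairs of trajectories reaching height $1$ simultaneously, or coincident birth/resident-change times); by Lemma~\ref{ITunique}, the limiting law assigns probability zero to this degenerate set. Hence the continuous mapping theorem gives~\eqref{convHN}, \eqref{convrhoN} and~\eqref{convMN} jointly with~\eqref{convBN}. For~\eqref{convHN} on $[0, t_0]$ in $(\mathcal D([0, t_0], [0,1]))^{\N_0}$, one observes that trajectories born after $t_0$ are identically zero on the interval, so only the first $i^\ast$ coordinates are non-trivial; conditioning on the value of $i^\ast$ reduces the assertion to a finite-dimensional statement. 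The main obstacle I anticipate lies in the inductive step for the $B_j^N$'s: tracking through possibly several intermediate resident changes between $T_{j-1}$ and $T_j$ via concatenated applications of Lemmas~\ref{lem:multitype-without-kinks} and~\ref{lem:multitype-sweep}, and transferring the conclusion of Lemma~\ref{newbornmut} (originally stated for a fixed initial configuration) to the random Moran state at time $T_j$ by a regular conditional probability argument.
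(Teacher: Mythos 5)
Your proposal is correct and follows essentially the same route as the paper: \eqref{convBN} is obtained by inducting on $j$ via Lemma~\ref{newbornmut} (whose conditional statement \eqref{convBN1} yields $\P(B_j^N=1\mid A_j)\to A_j/(1+A_j)$, suitably conditioned on the past), and then \eqref{convHN}--\eqref{convMN} follow because $\widehat H^N, \rho^N, \widehat M^N$ and $H,\rho,M$ are deterministic measurable functions of the shared $(T_j,A_j)$ together with $(B_j^N)$ resp.\ $(B_j)$, with a.s.\ non-degeneracy (no coincident heights or birth times) supplied by the continuous distribution of the $T_i$ as in Lemma~\ref{ITunique}. The only stylistic difference is that you invoke an explicit continuous-mapping/degenerate-set argument whereas the paper relies on the simpler observation that, given $(T_j,A_j)_j$, the remaining input $(B_j)_j$ ranges over a discrete space so that the pushforward of the (finite-dimensional) $B$-convergence is automatic; both formulations are valid here.
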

\begin{proof}
\eqref{convBN} follows by induction from \eqref{convBN1}. The convergence \eqref{convHN} is a consequence of \eqref{convBN} and the definitions of $H$ and $\widehat H^N$. The convergence \eqref{convrhoN} follows  from \eqref{convBN} together with the construction of the PIT and the fact that the $T_i$ have a continuous distribution. Finally, \eqref{convMN} results from \eqref{convrhoN} together with the update rules \eqref{defMi} and \eqref{updateMN}.
\end{proof}
Assertion~\eqref{convHNdist} of Theorem~\ref{theorem-THE} now follows by combining~\eqref{convprobHN} and~\eqref{convHN}, while~\eqref{convMNdist} results from combining~\eqref{convprobMN} with~\eqref{convMN}. 
The convergence~\eqref{convGN} follows from~\eqref{convrhoN} together with~\eqref{convLambda} and an induction argument.
To complete the proof of Theorem~\ref{theorem-THE} it remains to show~\eqref{convFN}. Recall that there the use of the $M_2$-topology is due to the fact that the average fitness at times of a resident change can take any value between the fitness of the former and the fitness of the new resident.
    Denote by $\rho(t)$ the resident in the system \[ \mathbb H((1,0),((T_i,A_i))_i) \]
    at time $t$ and let $t$ not be a resident change time.
    Then with (quenched) probability tending to~$1$, the inequality $\sum_{i \neq \rho(t)} X_i^N(t) \leq N/\log N$ holds and by~\eqref{convHN} it is also true that
    $$
      \Big(1-\frac1{\log N}\Big)M^N_{\rho(t)}
        \leq \overline F^N(t)
          \leq M^N_{\rho(t)} + \frac1{\log N}\max_{i\leq\mathscr I^N(t)}M^N_i.
    $$
    That is, conditionally given $(T_i,A_i)_i$, $d_{M_2}(\overline F^N,\widehat F^N)\to0$ in probability, where $\widehat F^N$ denotes the resident fitness in the PIT$((1,0),\beth^N)$ with $\beth^N:=(T_i,A_iB_i^N)_i$.
    Further, by Proposition~\ref{prop-updateM}, $\widehat F^N\xrightarrow{d}F$ with respect to the Skorokhod $J_1$-topology, which is stronger than the $M_2$-topology. The desired convergence thus holds,
    conditionally given $(T_i,A_i)_i$.
    Finally, by triangular inequality and dominated convergence,
    $$
      d_{M_2}(\overline F^N,F) \to 0
    $$
    in probability, without conditioning.\qed

 \section{Speed of adaptation in the PIT: Proof of Theorems~\ref{theorem-speed} and~\ref{theorem-speedCLT}
 }\label{sec-speedproof}

This section is  devoted to the proofs of the results stated in Section~\ref{sec-speedresult}.

\subsection{Proof of Lemma~\ref{lemFM}}\label{sec-prooflem}
a) Since by definition $F$ is constant between resident change times, the assertion~\eqref{represF} is equivalent to
\begin{equation}\label{sumV}
F(R_\ell) = \sum_{j=1}^\ell V_{\rho(R_j)}(R_j-), \quad \ell = 1,2,\ldots
\end{equation}
We will prove~\eqref{sumV} by induction over $\ell$. For $\ell =1$, we observe that $T_{\rho(R_1)} < R_1$, hence $\rho(T_{\rho(R_1)}) = 0$ and 
$$F(R_1) = M_{\rho(R_1)} = M_{\rho(T_{\rho(R_1)})} + A_{\rho(R_1)} = M_0 + V_{\rho(R_1)}(T_{\rho(R_1)})= 0 + V_{\rho(R_1)}(R_1-),$$
with the first two equalities being due to \eqref{defMi} and \eqref{finc}, and the last equality resulting from the kinking rule  in Definition~\ref{PITdyn} (since by definition of $R_1$ there is no trajectory reaching height 1 from below before time $R_1$ and hence $V_{\rho(R_1)}$ remains constant between $T_{\rho(R_1)}$ and $R_1-$).
For $\ell > 1 $ let $J_\ell^{<}:= \{j: 0< R_j < T_{\rho(R_{\ell})}\}$. By~\eqref{finc}, \eqref{defMi} and the induction hypothesis (which says  that~\eqref{sumV} is valid for $j=1,\ldots, \ell-1$ in place of $\ell$) we then have a.s. the chain of equalities
\begin{eqnarray*}
F(R_{\ell})-F(R_{\ell-1}) &=& M_{\rho(R_{\ell})}-F(R_{\ell-1})\\ &=& F(T_{\rho(R_{\ell})})+ A_{\rho(R_\ell)} -F(R_{\ell-1})\\ &=&\sum_{j\in J_\ell^{<}}V_{\rho(R_j)}(R_j-)+ V_{\rho(R_{\ell})}(T_{\rho(R_{\ell})})- \sum_{j=1}^{\ell-1}V_{\rho(R_j)}(R_j-)\\ &=&  V_{\rho(R_{\ell})}(R_{\ell}-),
\end{eqnarray*}
where again the last equality is due to the kinking rule in Definition~\ref{PITdyn}. This completes the induction step for proving~\eqref{sumV}. \\
b) The kinking rule in Definition~\ref{PITdyn} together with~\eqref{represF} shows that  for all $i\in \N$, as long as $H_i > 0$, every jump of $V_i$ corresponds to a jump of $F$. More precisely,
for all $t$ with $H_i(t) > 0$,
\begin{equation}\label{Vjumps}
V_i(t-)-V_i(t) = F(t)-F(t-).
\end{equation}
Thus~\eqref{Vjumpsum} results by summing~\eqref{Vjumps} over the resident change times between $t$ and~$t'$. $\Box$
\subsection{Renewals in the PIT}\label{sec-renewal}
We can view $(H_i(t), V_i(t)) _{i = 0,1,\ldots}$ as the state at time $t$
of a Markovian system of particles whose dynamics (apart from the birth of particles  given by  the  Poisson process $(T_i,A_iB_i)_{i\in \N}$)
 is deterministic and follows the interactive dynamics introduced in Definition~\ref{PITdyn}. We note that an immediate corollary of~\eqref{represF} is
\begin{equation}\label{estfitnessinc}
  F(t) \le \sum_{i \colon T_i \leq t} A_iB_i, \qquad t \ge 0.
\end{equation}

\begin{remark}\label{FalongL} The solitary resident change times $L_n$ specified in Definition~\ref{srch} initiate  idle periods of the particle system, with the next resident still waiting for its birth.   Since the trajectories $i$ for which $V_i(L_n) \leq 0$ never become resident after time $L_n$, we may forget about  them and observe that  $\mathbb H(((1,0)), \Psi)$ has the same distribution as $\mathbb H(((1,0)), \Psi_n)$  , where (as in Section~\ref{PIT}) $\Psi= ((T_i,A_i\cdot B_i))_{i \in \mathbb N}$, and 
$$\Psi_n :=((T_{i_n+i-1}-L_n,A_{i_n+i-1}\cdot B_{i_n+i-1}))_{i \in \mathbb N},$$where $i_n := \min\{j \in \N: T_j > L_n\}$. Thus  the $L_n$ form {\em regeneration} (or {\em renewal}) {\em times} for the PIT. Intuitively, the restrictions of the PIT to the intervals $[L_n, L_{n+1})$ can be seen as  i.i.d. ``clusters of trajectories'', whose concatenation renders the PIT. In particular, with $L_0:=0$,
\begin{equation}
F(L_n)= \sum_{\ell=1}^n  (F(L_\ell)-F(L_{\ell-1})), \quad n=1,2\ldots,
\end{equation}
and the random variables $\big(L_n-L_{n-1}, F(L_n)-F(L_{n-1})\big)$, $n=1,2,\ldots$, are independent copies of $(L_1, F(L_1))$.
\end{remark}

\begin{lemma}[Cluster lengths have finite moments] \label{clusterlength} The first solitary resident change time $L_1$ obeys
\[
\E[e^{\alpha L_1}]<\infty \quad \text{ for some $\alpha>0$.}
\]
In particular,
$\E[L_1^\nu]  < \infty$ for all $\nu\in \N.$
\end{lemma}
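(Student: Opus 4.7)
The plan is to dominate $L_1$ by the first occurrence of a positive-probability event forcing a solitary resident change, and to iterate this via the Markov property of $\Pi$.

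First, I would fix $K>0$ with $q:=\gamma^*([K,\infty))>0$, which is possible since $\gamma^*$ is a probability measure on $(0,\infty)$. Consider the event
\[
E:=\{T_1^*\le 1\}\cap\{A_1^*\ge K\}\cap\{T_2^*-T_1^*>1/K\},
\]
in the notation of Remark~\ref{remark-discarding0slopes}. Starting from $\delta_{(1,0)}$, on $E$ the first contender is the only active trajectory with positive slope on $(T_1^*, T_1^*+1/A_1^*]$, so it reaches height~$1$ at time $T_1^*+1/A_1^*\le 1+1/K$ without interference, and the resulting resident change is solitary by Definition~\ref{srch}. Hence $L_1\le 1+1/K$ on $E$, and by independence of the Poisson marks, $\P(E)\ge (1-e^{-\lambda^*})\,q\,e^{-\lambda^*/K}=:p_0>0$.

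Next, to upgrade this to exponential moments of $L_1$, I would establish a uniform lower bound of the form $\inf_\pi\P_\pi(L_1\le T_0)\ge p_1>0$ for some $T_0,p_1>0$, where $\P_\pi$ denotes the law of the PIT started from a general initial state $\pi$ (cf.~Remark~\ref{remintuit}) and $L_1$ is read as the first hitting time of the state $\delta_{(1,0)}$ under that law. By the strong Markov property of $\Pi$ applied at the times $nT_0$, this gives $\P(L_1>nT_0)\le (1-p_1)^n$, from which $\E[e^{\alpha L_1}]<\infty$ for sufficiently small $\alpha>0$ follows by direct summation.

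The hard part will be establishing this uniform lower bound across all initial states $\pi$. The state space of $\Pi$ is infinite-dimensional, and $V_{\max}(\pi)$ may be arbitrarily large, so a single new $K$-contender does not universally dominate the current configuration. To address this, I would exploit two structural features: first, every active trajectory satisfies $V_i(t)\le A_i^*$, so $V_{\max}$ is a priori controlled by the Poisson influx of contender slopes, which is a.s.\ finite; second, during a quiet period of length $\Delta$ with no new contender arrivals, the number of resident changes is bounded by the initial number of positive-slope trajectories, and each change reduces the total positive-slope budget $\sum_i V_i^+$ by at least the new resident's slope (by the kinking rule in Definition~\ref{PITdyn}). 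Combining a sufficiently long quiet period (of positive probability by Poisson memorylessness) with a subsequent good event of the type above should yield the required uniform bound, and appropriate bookkeeping of the Poisson tails then delivers the exponential-moment estimate.
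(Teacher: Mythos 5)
Your first step is correct: on the event $E$ the PIT started from $\delta_{(1,0)}$ has $L_1 = T_1^*+1/A_1^*\le 1+1/K$, and $\P(E)>0$. The gap is in the iteration. To apply the Markov property of $\Pi$ at the deterministic times $nT_0$ you need the uniform bound $\inf_\pi\P_\pi(\text{hit }\delta_{(1,0)}\text{ by }T_0)\ge p_1>0$ over all states $\pi$ reachable from $\delta_{(1,0)}$, and this you do not establish. The ``quiet period plus positive-slope budget'' sketch does not close: the budget at the start of a quiet period may be arbitrarily large (it is dominated by the Poisson influx but not deterministically bounded), the state after the quiet period is in general far from $\delta_{(1,0)}$, so your event $E$ does not apply there, and a bound on the \emph{number} of resident changes does not by itself bound the \emph{time} to the bottleneck.

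The missing structural fact --- which is exactly what the paper's proof uses --- is that the slope of any trajectory that has been alive for time at least $s$ is strictly less than $1/s$: slopes are non-increasing and heights are capped at $1$, so a slope $\ge 1/s$ maintained for time $s$ would already have driven the trajectory to height $1$ and hence to a non-positive slope. Consequently, if a contender with $A_i\ge a_0$ is born at a time $T_i$ with no other contender born in $[T_i-2/A_i,\,T_i+2/A_i]$, then every trajectory with positive height at $T_i$ has slope $<A_i/2$, the cumulative kink suffered by $H_i$ before it becomes resident is $<A_i/2$ (by \eqref{Vjumpsum}), and $H_i$ reaches height $1$ by $T_i+2/A_i$ with the resulting resident change being solitary --- \emph{regardless of the state of the PIT at time} $T_i-2/A_i$. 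Rather than conditioning on the state of $\Pi$, the paper then runs a renewal directly on the Poisson input $\Psi$: it defines events $E_n$ on the disjoint strips $[5n/a_0,5(n+1)/a_0]\times\R_+$ (one contender in $C_n$, none elsewhere in $D_n$), each of which forces a solitary resident change in that window irrespective of the past; by Poisson independence the $E_n$ are i.i.d.\ with positive probability, so the first index $K$ at which one occurs is geometric, giving $L_1\le 5(K+1)/a_0$ and the exponential moment. Your plan would work if you proved the uniform bound via this slope observation, but as written the proof is incomplete at precisely the step you flag as ``the hard part.''
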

\begin{proof}   1. Let $i \in \N$ be such that 
\begin{equation}\label{solitarycond}
A_iB_i>0 \mbox{ and there is no } i'\neq i \mbox{ with } B_{i'} > 0  \mbox{ and }T_{i'} \in \big[T_i-\tfrac 2{A_i}, T_i+\tfrac 2{A_i}\big].
\end{equation}
We claim that as a consequence, the trajectory born at  time $T_i$ becomes resident not later than $T_i+\frac 2{A_i}$, and moreover that this resident change is solitary. 
To this purpose we first observe that any trajectory whose height $H_k(T_i)$ is strictly positive must have been born at some time $T_k < T_i-\frac 2{A_i}$ and hence must have at time $T_i$ a slope 
\begin{equation}\label{slopebound}
V_k(T_i) < \frac {A_i}2.
\end{equation} 
This is true because $t \mapsto V_k(t)$ is non-increasing on $[T_k,\infty)$ (which is clear by Definition~\eqref{defdyn}) and becomes non-positive as soon as $H_k(t)$ has reached height 1.
Let
$$
  S
   := \sup\big( \{T_i\} \cup \{R_\ell\mid \ell \in \N \mbox{ such that } \max_{t\le R_\ell} H_i(t)  < 1 \} \big). 
$$
On the event $\{S=T_i\}$ there are no resident changes after $T_i$  until the trajectory born at time~$T_i$ reaches height 1. Hence this trajectory keeps its initial slope $A_i$, reaches height 1 at time $T_i+\frac 1{A_i}$ and at this time  kinks the slopes of all the trajectories  whose height was positive at time $T_i$ to a negative value. 

On the event $\{S>T_i\}$, put $k:= \rho(S)$. Observing that $V_k(S) =0$  we obtain from  \eqref{Vjumpsum} and  
\eqref{slopebound}
$$F(S) -F(T_i) = V_k(T_i) -V_k(S) \le \frac {A_i}2.$$ Likewise, observing that $V_i(T_i)=A_i$, we obtain   from \eqref{Vjumpsum} 
$$V_i(S)- A_i = F(T_i)-F(S)  \ge -\frac {A_i}2,$$
hence $V_i(S)  \ge \frac {A_i}2$. Consequently, the trajectory born at time~$T_i$ keeps a slope of at least $\frac {A_i}2$ until it becomes resident at some time $R\le T_i+\frac 2{A_i}$. Thus, all trajectories that were born before time $T_i-\frac 2{A_i}$ and at time $R$ have height in $(0,1]$  are kinked to a negative slope at time $R$, and by assumption no contending trajectories except $H_i$  are born in the time interval $[T_i-\frac 2{A_i}, T_i+\frac 2{A_i}]$. Hence $R$ is the time of a solitary resident change. An
illustration of this step is available in Figure~\ref{figure-renewal}.

\begin{figure}
\begin{tikzpicture}[scale=3]

    \draw[->,color=black] (1.3, 0) -- (5.45, 0) node[right] {time};
    \draw[->,color=black] (1.3, 0) -- (1.3, 1.05) node[above] {};

    \draw[black] (1.27,1) node[left] {\footnotesize $1$};
    \draw[black] (1.27,0) node[left] {\footnotesize $0$};
 
    
    \draw[blue] (2.45,-0.05) node[below] {\footnotesize $T_i-2/A_i$};
     \draw[blue] (3.38, -0.05) node[below] {\footnotesize $ T_i$};
    \draw[brown] (3.65,-0.05) node[below] {\footnotesize $S$};
    \draw[blue] (3.985,-0.04) node[below] {\footnotesize $ R$};
    \draw[blue] (4.4,-0.05) node[below] {\footnotesize $T_i+2/A_i$};

    \draw[dashed,blue,thick] (2.4, 0) -- (2.4,1) node[above] {};
    \draw[dashed, brown,thick] (1.65, 0) -- (1.65,1) node[above] {};
    \draw[dashed, blue,thick] (3.4, 0) -- (3.4,1) node[above] {};
    \draw[brown,thick] (3.65, 0) -- (3.65,1) node[above] {};
    \draw[blue,thick] (3.985, 0) -- (3.985,1) node[above] {};
    \draw[dashed,blue,thick] (4.4, 0) -- (4.4,1) node[above] {};

    \draw[domain=1.3:3.65, variable=\x, gray,thick] plot ( {\x}, {1} );
    \draw[ domain=3.65:3.985, variable=\x, gray,thick] plot ( {\x}, {1-0.5*(\x-3.65)} );
    \draw[domain=3.985:4.4, variable=\x, gray,thick] plot ( {\x}, {1-0.1667-2*(\x-3.985)} );
    \draw[gray] (4,0.27) node[right] {\footnotesize $-2$};
    \draw[gray] (3.6,0.85) node[right] {\footnotesize $-0.5$};

    \draw[brown,thick] (2.63,0.67) node[right] {\footnotesize $0.5$};
    \draw[brown,thick] (4.1,0.77) node[right] {\footnotesize $-1.5$};
    \draw[domain=1.65:3.65, variable=\x,brown,thick] plot ( {\x}, {0.5*(\x-1.85)+0.1} );
    \draw[domain=3.65:3.985, variable=\x, brown,thick] plot ({\x}, {1});
    \draw[domain=3.985:4.652, variable=\x, brown,thick] plot ({\x}, {1-1.5*(\x-3.985)});

    \draw[blue,thick] (3.75,0.22) node[below] {$2=A_iB_i$};
    \draw[blue,thick] (3.82,0.65) node[below] {\footnotesize $1.5$};
    \draw[domain=3.4:3.65, smooth, variable=\x, blue,thick] plot ( {\x}, {2*(\x-3.4)});
    \draw[domain=3.65:3.985, smooth, variable=\x, blue,thick] plot ( {\x}, {1.5*(\x-3.65)+0.5});
    \draw[domain=3.985:5.45, variable=\x, blue,thick] plot ({\x}, {1});
    

\end{tikzpicture}

\begin{tikzpicture}[scale=3]

    \draw[->,color=black] (1.3, 0) -- (5.45, 0) node[right] {time};
    \draw[->,color=black] (1.3, 0) -- (1.3, 1.05) node[above] {};

    \draw[black] (1.27,1) node[left] {\footnotesize $1$};
    \draw[black] (1.27,0) node[left] {\footnotesize $0$};
 
    
    \draw[blue] (2.45,-0.05) node[below] {\footnotesize $T_i-2/A_i$};
    \draw[blue] (3.38, -0.05) node[below] {\footnotesize $ T_i=S$};
    \draw[blue] (3.935,-0.04) node[below] {\footnotesize $ R$};
    \draw[blue] (4.4,-0.05) node[below] {\footnotesize $T_i+2/A_i$};

    \draw[dashed,blue,thick] (2.4, 0) -- (2.4,1) node[above] {};
    \draw[dashed, blue,thick] (3.4, 0) -- (3.4,1) node[above] {};
    \draw[blue,thick] (3.9, 0) -- (3.9,1) node[above] {};
    \draw[dashed,blue,thick] (4.4, 0) -- (4.4,1) node[above] {};

    \draw[domain=1.3:3.9, variable=\x, gray,thick] plot ( {\x}, {1} );
    \draw[domain=3.9:4.4, variable=\x, gray,thick] plot ( {\x}, {1-2*(\x-3.9)} );
    \draw[gray] (4,0.27) node[right] {\footnotesize $-2$};

    \draw[blue] (3.69,0.22) node[below] {\scriptsize $2=A_iB_i$};
    \draw[domain=3.4:3.9, smooth, variable=\x, blue,thick] plot ( {\x}, {2*(\x-3.4)});
    \draw[domain=3.9:5.45, variable=\x, blue,thick] plot ({\x}, {1});
    

\end{tikzpicture}
\caption{
Illustration of part 1 of the proof of Lemma~\ref{clusterlength}. Top: case $\{ S > T_i \}$. Between times $T_i-2/A_i$ and $T_i+2/A_i$ there is no birth time apart from $T_i$, while we have $V_i(T_i)=A_iB_i=2$. All slopes of trajectories that are still positive at time $T_i$ are at most $A_i/2$, and hence the $i$-th trajectory reaches height 1 at time $R \leq T_i + 2/A_i$ at latest, kinking all other trajectories with current heights in $(0,1]$ to a negative slope. In the picture, the only trajectory still having a positive slope at time $T_i$ is the brown one, and $S$ is the time when this trajectory reaches height 1. The slope of the brown trajectory in $[T_i,S)$ equals $0.5$, and thus at time $R$, the blue trajectory is kinked to slope $2-0.5=1.5 \geq A_i/2$. The gray trajectory corresponds to the mutant who is resident at time $T_i$ (this is the last resident before the brown one). \\
Bottom: case $\{ S=T_i \}$. Now the brown mutant is absent, so that the blue trajectory suffers no kink before reaching height 1, and the
previous resident before the blue one is the gray one. Note that here, the time when the blue trajectory reaches height 1 is $R=T_i+1/A_i$.}
\label{figure-renewal}
\end{figure}

2. Let $i_0:= \min \{ i\in \N \mid i \mbox{ has property } \eqref{solitarycond}\}$. We claim that $i_0 < \infty$ a.s.\
and that $\E[e^{\alpha R_0}]< \infty$ for some $\alpha>0$,
where~$R_0$ is the time at which the trajectory born in $T_{i_0}^{}$ becomes resident. To see this, consider the Poisson point process $\Phi := \sum_{i\in \N}\delta_{(T_i, A_iB_i)}$. Let $a_0 > 0$ be such that 
$\gamma([a_0, \infty)) > 0$. For $n \in \N$ we define the sets $C_n$, $D_n \subset \R_+\times \R_+$ and the events $E_n$ by
$$C_n :=\big[\tfrac{5n+2}{a_0}, \tfrac{5n+3}{a_0}\big]\times[a_0, \infty), 
\quad 
D_n:=\Big(
\Big[\tfrac{5n}{a_0}, \tfrac{5n+5}{a_0}\Big] 
\times \R_+\Big)
\setminus C_n,
$$
\[
  E_n
   := \{\Phi(C_n) = 1\}\cap \{\Phi(D_n) = 0\}.
   \numberthis\label{Endef}
\]
The events $E_n$ are independent and have a probability that does not depend on $n$. Due to our choice of~$a_0$ this probability is positive.
Therefore, the random variable $K := \min\{n \mid \1_{E_n} =1\}$ is a geometric random variable with a positive parameter. This implies that $\E[e^{\alpha' K}]<\infty$ for some $\alpha'>0$. Since by construction $R_0 \le \frac{5(K+1)}{a_0}$, it is enough to take $\alpha = a_0 \alpha'/5$.

3. Because of step 1, the resident change time $R_0$ found in step 2 is solitary. Obviously, $L_1 \le R_0$, and thus 
$\E[e^{\alpha L_1}]<\infty$ with $\alpha>0$ as in step 2.
\end{proof}

\subsection{Proof of Theorem~\ref{theorem-speed}}\label{sec-mainrenewalproofs}
Theorem~\ref{theorem-speed} is a direct consequence of the following proposition, which in turn relies on the just proved key Lemma~\ref{clusterlength}. 
\begin{prop}\label{renrewprop}
a) \, Almost surely, $\lim\limits_{t\to \infty}\tfrac{F(t)}t$ \, exists, and equals $\overline v :=  \frac{\E[F(L_1)]}{\E[L_1]}$.

b)\, \,$\overline v  \le \lambda \E[A_1B_1]$.

c)   $\overline v < \infty$ if and only if   $\int_0^\infty a \, \gamma(da) < \infty$.
\end{prop}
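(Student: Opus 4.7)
For part~(a), the plan is to exploit the renewal structure from Remark~\ref{FalongL}: the pairs $(L_n - L_{n-1}, F(L_n) - F(L_{n-1}))$, $n \geq 1$, are i.i.d.\ copies of $(L_1, F(L_1))$, and $\E[L_1] < \infty$ by Lemma~\ref{clusterlength}. The strong law of large numbers (which applies to i.i.d.\ nonnegative summands also when the common mean is infinite, by a standard truncation argument) then yields $L_n / n \to \E[L_1] \in (0, \infty)$ and $F(L_n) / n \to \E[F(L_1)] \in (0, \infty]$ almost surely (positivity of the latter follows since $F(L_1) \geq F(R_1) > 0$ a.s., the first resident change happening a.s.\ at finite time because $\lambda^*>0$). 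Setting $N(t) := \max\{ n \geq 0 : L_n \leq t \}$ with $L_0 := 0$, the monotonicity of $F$ combined with $L_{N(t)} \leq t \leq L_{N(t)+1}$ gives the elementary sandwich
\[
\frac{F(L_{N(t)})/N(t)}{L_{N(t)+1}/N(t)} \;\leq\; \frac{F(t)}{t} \;\leq\; \frac{F(L_{N(t)+1})/N(t)}{L_{N(t)}/N(t)},
\]
and since $N(t) \to \infty$ a.s., both bounds converge almost surely to $\overline v := \E[F(L_1)]/\E[L_1]$.

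For part~(b), the bound~\eqref{estfitnessinc} yields $F(t)/t \leq t^{-1}\sum_{i:\,T_i \leq t} A_i B_i$, and the classical SLLN for compound Poisson processes gives $t^{-1}\sum_{i:\, T_i \leq t} A_i B_i \to \lambda\,\E[A_1 B_1] = \lambda\int_0^\infty\tfrac{a^2}{1+a}\gamma(\d a)$ almost surely in $[0,\infty]$. Combined with part~(a), this delivers $\overline v \leq \lambda\,\E[A_1 B_1]$.

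For part~(c), the ``if'' direction is immediate from~(b) since $\tfrac{a^2}{1+a} \leq a$. The ``only if'' direction is the main obstacle, and the plan is to exhibit an explicit \emph{isolation event} on which $F(L_1)$ is read off as the fitness increment of the first contender. With $T^*_1 < T^*_2$ and $A^*_1$ denoting the first two contender arrival times and the first contender's fitness increment as in Remark~\ref{remark-discarding0slopes}, $A^*_1 \sim \gamma^*$ is independent of the gap $T^*_2 - T^*_1 \sim \mathrm{Exp}(\lambda^*)$. On the event $\Omega := \{T^*_2 - T^*_1 > 1/A^*_1\}$, the first contender (indexed $j_1$ in the full labelling $(T_i,A_i,B_i)_i$) faces no competing contender before its trajectory reaches height~$1$, so it keeps slope $A^*_1$ throughout $[T^*_1, R_1]$ with $R_1 = T^*_1 + 1/A^*_1$; the kinking rule applied at $R_1$ then reduces $V_{j_1}$ from $A^*_1$ to $0$ while no other index-$\geq 1$ trajectory is at positive slope (non-contenders stay at height $0$ with slope $0$, and later contenders have not arrived yet). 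Hence $L_1 = R_1$ is solitary and $F(L_1) = V_{j_1}(R_1-) = A^*_1$. Choosing $a_0$ large enough that $e^{-\lambda^*/a_0} \geq 1/2$, we have $\P(\Omega \mid A^*_1 = a) = e^{-\lambda^*/a} \geq 1/2$ for all $a \geq a_0$, whence
\[
\E[F(L_1)] \;\geq\; \E[A^*_1\, \mathds 1_\Omega] \;\geq\; \tfrac{1}{2}\int_{a_0}^\infty a\,\gamma^*(\d a) \;=\; \tfrac{\lambda}{2\lambda^*}\int_{a_0}^\infty \tfrac{a^2}{1+a}\,\gamma(\d a).
\]
Since $\tfrac{a^2}{1+a} \geq a/2$ for $a \geq 1$, the last integral is infinite whenever $\int_0^\infty a\,\gamma(\d a) = \infty$; by part~(a) this forces $\overline v = +\infty$, completing the proof.
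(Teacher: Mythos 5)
Your proposal is correct and follows essentially the same route as the paper's proof: part~(a) via the renewal structure of Remark~\ref{FalongL} and the standard renewal-reward sandwich, part~(b) via~\eqref{estfitnessinc} and the SLLN for the compound Poisson process, and part~(c) via an isolation event on which $F(L_1)$ equals the first contender's fitness increment. The only (cosmetic) difference is in~(c): the paper bounds $\E[F(L_1)]$ below using the fixed-threshold event $\{A_1B_1\ge 1\}\cap\{T_1<1\}\cap\{T_2\ge 2\}$ on the raw mutation process, whereas you use the $A^*_1$-adapted gap event $\{T^*_2-T^*_1>1/A^*_1\}$ on the thinned contender process — both isolate the first contender so that $L_1=T^*_1+1/A^*_1$ and $F(L_1)=A^*_1$, and both yield divergence of $\E[F(L_1)]$ when $\int a\,\gamma(\d a)=\infty$.
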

\begin{proof} In view  of Remark~\ref{FalongL}, 
\begin{equation}\label{Fhat}
\widehat F(t):= \sum_{i\ge 1}\1_{\{L_i\le t\}}(F(L_i)-F(L_{i-1})) =\sum_{i=0}^{\infty} F(L_i) \mathds 1_{[L_i,L_{i+1})}(t), \qquad t\geq 0,
 \end{equation} is a renewal reward process, and  thanks to Lemma~\ref{clusterlength} assertion a) is a quick consequence of the law of large numbers.   For convenience of the reader we recall the argument.  For $t \ge 0$ let $n(t)$ be such that $L_{n(t)} \le t < L_{n(t)+1}$. Then
\begin{equation}\label{sandwichF}
\frac{F(L_{n(t)})/n(t)}{L_{n(t)+1}/n(t)} \le \frac{F(t)}t \le \frac{F\left(L_{n(t)+1}\right)/n(t)}{L_{n(t)}/n(t)} 
\end{equation}
Since 

$\bullet \quad n(t)\to \infty$ a.s.\ as $t\to \infty$, 

$\bullet \quad L_n$ is a sum of i.i.d.\ copies of $L_1$ which has finite expectation by Lemma~\ref{clusterlength},

$\bullet \quad F(L_n)$ is a sum of i.i.d.\ copies of $F(L_1)$, 

\smallskip \noindent
both the left and the right hand side of \eqref{sandwichF} converge a.s.\ to $\frac{\mathbb E[F(L_1)]}{\mathbb E[L_1]}$. This proves assertion a).

To show assertion b) we first observe that the strong law of large numbers for renewal processes gives the a.s.\ convergence
$\frac 1t  \sum_{i: T_i < t} A_iB_i \to  \lambda \E[A_1B_1]  \mbox{ as } t\to \infty.$
Combining this with~\eqref{estfitnessinc} results in assertion b).

We now turn to the proof of c). From the definition of $\overline v$ and Lemma~\ref{clusterlength} it follows that $\overline v < \infty$ if and only if $\E[F(L_1)] < \infty$. On the other hand, the finiteness of $\int a \gamma(da)$ clearly is equivalent to the finiteness of $\E[A_1B_1] = \int a\frac a{a+1} \gamma(da)$. In view of the proposition's part b)  it thus only remains to show that $\E[F(L_1)]$ is infinite provided $\gamma$ has infinite expectation. This, however, follows from the estimate
\[
  \E[F(L_1)]
   \ge \E[A_1\1_{\{A_1B_1\geq1\}\cap\{T_1<1\}\cap\{T_2\geq2\}}].
\]
\end{proof}

\subsection{Proof of Propositions~\ref{prop-1case} and~\ref{highmut}}
\begin{proof}[Proof of Proposition~\ref{prop-1case}.] 
Let $T$ be the time at which the first contending mutation appears. The time $T$ has an exponential distribution whose parameter is $\lambda \frac c{1+c}$, the intensity of the birth process of contending mutations.  The first contending mutation becomes resident at time  $R:=T +\frac 1c$, and all contending mutations that are born in the time interval $(T, R)$ are kinked to slope $0$ at time $R$. This means that $R$ is the first solitary resident change time $L_1$ specified in Definition~\ref{srch}. This time has expectation $$
  \E[L_1]
    = \E[T] + \frac 1c = \frac 1{\frac{\lambda c}{1+c} }+ \frac 1c
    = \frac{1+c+\lambda}{c\lambda},
$$
and the ``renewal reward'' $F(L_1)$ has the deterministic value $c$. Thus, the assertion of Proposition \ref{prop-1case}  follows directly from Proposition~\ref{renrewprop}~a).
\end{proof}
\begin{proof}[Proof of Proposition~\ref{highmut}.]
 Recalling Definition~\ref{defdyn},
  consider the system $\mathbb H(\aleph,\beth)$ where $\aleph=((1,0),(0,b))$ and $\beth=((\frac ib,b))_{i\geq1}$. There, at time $0$ immediately a line starts with slope $b$ and, just as that hits $1$, the next line starts with slope $b$ and so on. In this system, the resident fitness will always jump up by $b$ at times $i/b$, $i\in\N$, and thus equals
  $b\lfloor bt\rfloor$ at any time $t$. 
  This system describes a best case scenario for the PIT$(\lambda, \gamma)$ in this proposition, in the sense that the resident fitness of the PIT$(\lambda, \gamma)$ obeys $F_{\lambda}(t) \le b\lfloor bt\rfloor$.  Since $\P(F_\lambda(\frac1b)=b) = 0$, we obtain for all $t$ that almost surely $F_\lambda(t)$ is bounded from above by the left-continuous version of $t\mapsto b\lfloor bt\rfloor$, i.e.\ $F_\lambda(t) \leq b(\lceil bt\rceil - 1)$.
  
  For a lower bound let $\Psi_\lambda$ be a Poisson point process of intensity $\lambda dt\otimes\gamma$, fix $\varepsilon\in(0,\frac b2)$ and note that the probability of the event
  \[
    E_\lambda
      := \{\Psi_\lambda\cap ([0,\varepsilon)\times[b-\varepsilon,b])=\emptyset\},
      \qquad\text{i.e.\ }
      e^{-\lambda\varepsilon\gamma([b-\varepsilon,b])},
  \]
  tends to $0$ as $\lambda\to\infty$.
  Now, note that outside of $E_\lambda$ there is at least one mutant line born before time $\varepsilon$ of slope at least $b-\varepsilon$. Hence, the first change of resident will be at the latest at time $\varepsilon+\frac1{b-\varepsilon}$ and will add fitness of at least $(\varepsilon+\frac1{b-\varepsilon})^{-1}$. At that moment, all other contenders will be kinked to a slope of at most $\varepsilon<b-\varepsilon$. From there, we can iterate and obtain
  $$
    F_\lambda(t)
      \geq \Big(\varepsilon+\frac1{b-\varepsilon}\Big)^{-1}
            \bigg(
              \Big\lceil\Big(\varepsilon+\frac1{b-\varepsilon}\Big)^{-1}t\Big\rceil
               - 1 \bigg)
  $$
  on an event of probability $\P(E_\lambda^c)^{\lfloor (\varepsilon+\frac1{b-\varepsilon})^{-1}t\rfloor}\to1$. Since $(\varepsilon+\frac1{b-\varepsilon})^{-1}\uparrow b$, as $\varepsilon\downarrow0$, the proposition holds.
\end{proof}

\subsection{Proof of Theorem~\ref{theorem-speedCLT}}\label{sec3_4}

1. In order to apply the result of Appendix~\ref{s:RenRew} to the renewal reward process $\widehat F$ defined in \eqref{Fhat} with $F(0)=0$, 
we have to check that, under our assumption that $\int_0^\infty a^2\gamma(\d a)<\infty$,
\begin{equation}\label{finsecmom}
\E[F(L_1)^2]< \infty.
\end{equation}
In order to exploit the independence properties of the Poisson process $(T_i, A_iB_i)_{i\ge 1}$ we work with the random variable $K$ defined in the proof of  Lemma~\ref{clusterlength} and set out to show that
\begin{equation}\label{finsecmomK}
 \E\Big[F\Big(\tfrac{5(K+1)}{a_0}\Big)^2\Big] < \infty.
\end{equation}
In view of $L_1 \le 5(K+1)/a_0$,  the  representation~\eqref{sumV} and the estimate~\eqref{estfitnessinc} we have
\begin{equation}\label{estimateofF}
F(L_1) \le F\Big(\tfrac{5(K+1)}{a_0}\Big) \le \sum_{n=0}^K{X_n}
\end{equation}
where
$$
  X_n
   := \sum_{i\ge 1} \1_{\big\{\tfrac{5n}{a_0} \le T_i < \tfrac{5(n+1)}{a_0}\big\}} A_iB_i,
   \quad n \ge 0.
$$ 
Thus for proving~\eqref{finsecmom} it suffices to show that the second moment of the r.h.s. of~\eqref{estimateofF} is finite.
By definition of $K$ and from the second moment assumption on $\gamma$, 
$$
  \E[X_K^2] 
    = \E[ A_1^2 \mid A_1 B_1 \geq a_0]
    = \int_{a_0}^\infty a^2\tfrac a{a+1} \gamma(\d a) \bigg /  \int_{a_0}^\infty \tfrac a{a+1} \gamma(\d a) < \infty.
$$
We know from the proof of Lemma~\ref{clusterlength} that $\E[K^2]< \infty$. Hence the finiteness of the second moment of the r.h.s. of~\eqref{estimateofF} is guarenteed if we can show that
\begin{equation} \label{condsecmom}
\E[X_n^2 \mid n< K] = c <\infty
\end{equation}
with $c$ not depending on $n$. For this we use the terminology from  the proof of Lemma~\ref{clusterlength}. Both $\Phi(C_n)$ and $\Phi(D_n)$ are Poisson random variables  with parameters that depend only on $\lambda, \gamma$ and $a_0$, let us put $\alpha_C:=\E[\Phi(C_n)]$ and $\alpha_D:=\E[\Phi(D_n)]$. 
Recalling from~\eqref{Endef} that
\[
  E_n^c
   = \{\Phi(C_n) \neq 1\} \cup \{\Phi(D_n) \neq 0\},
\]
note that
$
  \E[X_n^2 \mid n< K]
    = \E[X_n^2 \mid \cap_{k=1}^n E_k^c]
    = \E[X_n^2 \mid E_n^c]
$
since $X_n$ is independent of $\1_{E_k}$ for $k \neq n$.
We write $X_n = X_{C,n} + X_{D,n}$, with
$$
  X_{C,n}
   := \sum_{i:(T_i,A_i)\in C_n} A_iB_i,
  \qquad
  X_{D,n}
   := \sum_{i:(T_i,A_i)\in D_n} A_iB_i
$$
The random variables $X_{C,n}$ and $X_{D,n}$  are measurable w.r.t.\ the random point measures $\Phi\big |_{C_n^{}}$ and~$\Phi\big |_{D_n^{}}$, respectively. Conditioning these random point measures under the event $E_n^c$ affects only the number of their points in the sets $C_n$ and $D_n$ and not the distribution of the points' locations. Recalling that $\gamma^*(da) = \tfrac a{1+a} \gamma(da) \big /  \int \tfrac {a'}{1+a'} \gamma(da')$, let $Y_C$ and $Y_D$ be random variables with distribution $\gamma^* \big |_{[a_0, \infty)} \big / \gamma^*([a_0,\infty))$ and $\gamma^*$, respectively. The above considerations imply
\begin{equation}\label{Xnsquare}
\begin{aligned}
  \E[X_n^2 \, |\, E_n^c]
    & \le 2 \big(\E[X_{C,n}^2 \, |\, E_n^c] + \E[X_{D,n}^2 \, |\, E_n^c]\big) \\
    & \le 2 \big(\E[\Phi(C_n)^2 \, |\, E_n^c] \, \E[Y_C^2]
       + \E[\Phi(D_n)^2 \, |\, E_n^c] \,\E[Y_D^2]\big).
\end{aligned}
\end{equation}
Our second moment assumption on $\gamma$ implies that both $\E[Y_C^2]$ and $\E[Y_D^2]$ are finite. 
Thanks to the assumption $\gamma([a_0, \infty)) > 0$ we have $\alpha_C > 0$.
Hence
$$\P(E_n^c) \ge \P(\Phi(C_n) \neq 1)) = 1-\alpha_Ce^{-\alpha_C} =:\beta_C > 0.$$
Consequently,
$$\E[\Phi(C_n)^2+ \Phi(D_n)^2\mid E_N^c] \le \frac 1{\beta_C}\left(\alpha_C(\alpha_C+1)+\alpha_D(\alpha_D+1)\right)< \infty.$$
This shows that the r.h.s. of~\eqref{Xnsquare} is finite and does not depend on $n$, thus showing~\eqref{condsecmom} and completing the proof of~\eqref{finsecmom}.

2. 
The quantity
\begin{equation}\label{defsigma}
    \sigma^2 := \frac{\E[(F(L_1)-\overline{v} L_1)^2]}{\E[L_1]}
\end{equation}
is finite by \eqref{finsecmom} and Lemma~\ref{clusterlength},
and positive since the random variable $F(L_1) - \overline{v} L_1$ is not almost-surely constant.
Then Theorem~\ref{thm:FCLTRenRew} applied to the renewal reward process $\widehat{F}$ implies
\begin{equation}\label{CLTEKM}
  \bigg(\frac{\widehat{F}(nt)-\overline v nt}{\sigma \sqrt{n}} \bigg)_{t \geq 0} 
    \overset{d}{\longrightarrow} W \quad\mbox{as}\quad n\to \infty.
\end{equation}
It is plain that $\widehat{F}(t) \le F(t)$ for all $t\ge 0$.
On the other hand, considering
\[\widetilde F(t)= \sum_{i=0}^{\infty} F(L_{i+1}) \mathds 1_{[L_i,L_{i+1})}(t), \]
we have $\widetilde F(t) \geq F(t)$ for all $t$.
In order to conclude, it suffices to show that, for any $M >0$,
\[\sup_{ t \in [0,M]} \frac{\widetilde F(nt) - \widehat F(nt)}{\sqrt n} \overset{n\to\infty}{\longrightarrow} 0 \qquad \text{in probability}, \numberthis\label{differenceto0inP} \]
since this will imply that the Skorokhod distance between diffusive rescalings of $\widehat{F}$ and $F$
will go to zero in probability and hence \eqref{CLTEKM} will be valid with $F$ in place of $\widehat{F}$ as well.
To that end, denote by $N_t = \sup \{n \in \N \colon\, L_n \leq t\}$ (with $\sup \emptyset = 0$) the number of SRC times up to time $t$, and note that $\widehat{F}(t)=F(L_{N_t})$, $\widetilde{F}(t) = F(L_{N_t+1})$.
By \cite[Theorem~2.5.10]{EKM97},
\begin{equation}\label{prCLTNt}
\lim_{t \to \infty} \frac{N_t}{t} = \frac{1}{\E[L_1]} \quad \text{almost surely,}
\end{equation}
and $\E[L_1] \in (0, \infty)$ by Lemma~\ref{clusterlength}.
Now, for $M,\eps>0$,
\begin{equation*}
\begin{aligned}
  & \P\Big(\sup_{t \in [0,M]} \widetilde{F}(nt)-\widehat{F}(nt) > \varepsilon \sqrt{n}\Big)\\
  \leq \, & \P(N_{nM}>2 nM/E[L_1]) + \P\big(\exists k \leq 2nM/E[L_1] \colon\, F(L_{k+1}) - F(L_k) > \eps \sqrt{n} \big) \\
  \leq \, & \P(N_{nM}>2 nM/E[L_1]) + (2nM/E[L_1]+1)\P(F(L_1) > \varepsilon \sqrt{n}).
\end{aligned}
\end{equation*}
The first term in the r.h.s.\ above goes to zero as $n \to \infty$ by \eqref{prCLTNt}, and the second term
also goes to zero because $F(L_1)$ is square-integrable.
This concludes the proof.
\hfill $\qed$

\section{Fixation of mutations in the PIT and heuristics for the speed of adaptation}\label{sec-construction}\label{sec:fixandGLh}
Complementing Sections~\ref{PIT} and~\ref{sec-speedresult}, Section~\ref{compfix} states properties of the genealogy of mutations of the PIT.    
 \subsection{Fixation of mutations in the PIT}\label{compfix}
Recalling Definition~\ref{genmut} of the genealogy of mutations in the PIT, a mutation is said to {\em fix} (or {\em to reach fixation}) if it is ancestral to all mutations in the far future. 
Clearly, only contending mutations have a chance to fix (recall the notion of contenders from Remark~\ref{remark-discarding0slopes}).  
We consider three attributes of contending mutations: 
\\
        \phantom{UASR}R: \ becoming resident, \\
        \phantom{SRR}UA: \ becoming {\em ultimately ancestral}, i.e.\ eventually reaching fixation, \\
        \phantom{UAR}SR: \ becoming solitary resident.

\begin{lemma}\label{compfixlemma}For contending mutations in the PIT the following implications are valid:
$$\mbox{SR} \Rightarrow \mbox{UA} \Rightarrow \mbox{R}. $$ 
For neither of the two implications, the converse
is true in general.
If, however, fitness advantages are deterministic and fixed, then R implies SR.
\end{lemma}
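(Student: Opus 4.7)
The plan is to read the two positive implications directly off the PIT dynamics of Definition~\ref{PITdyn} and the mutation genealogy of Definition~\ref{genmut}, to refute the converses with the example drawn in Figures~\ref{fig-limitingexample}--\ref{fig-genealogyexample}, and to establish the deterministic-case claim $\mathrm R\Rightarrow\mathrm{SR}$ by a short induction on resident change times. For $\mathrm{SR}\Rightarrow\mathrm{UA}$, let $L$ be a solitary resident change time at which type $j$ becomes resident. Definition~\ref{srch} gives $V_i(L)\leq 0$ for every $i\geq 1$, and since each subsequent kink subtracts a positive quantity from every alive slope, any trajectory alive at $L$ other than $j$ retains a non-positive slope and can never reach height $1$ again. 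Consequently, every later resident change must be triggered by a mutation born after $L$, and an induction on mutation index shows that the current resident is always either $j$ itself or a descendant of $j$ in $\mathscr G$; thus every mutation $k$ with $T_k>L$ has $\rho(T_k)$ equal to $j$ or a descendant of $j$ and is hence itself a descendant of $j$, which is precisely UA.

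The implication $\mathrm{UA}\Rightarrow\mathrm R$ is immediate: since a.s.\ infinitely many mutations occur after any finite time, ultimate ancestrality of $j$ forces the existence of at least one child of $j$ in $\mathscr G$ (the first edge of any directed path from $j$ to a late enough proper descendant produces such a child $k$), and by Definition~\ref{genmut} this amounts to $\rho(T_k)=j$, so $j$ is resident at time $T_k$. For the failure of the two converses it suffices to inspect Figure~\ref{fig-genealogyexample}: type $4$ becomes resident but is kinked to extinction without ever producing any offspring, so it satisfies R but not UA; conversely, type $3$ becomes resident at $R_1$ in a non-solitary way because the contender $4$ still has positive slope $V_4(R_1)>0$, yet the bold ancestral edges $0\to 3\to 6$ show that every sufficiently late mutation descends from $3$, so $3$ has UA but not SR.

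For the deterministic case $\gamma=\delta_c$, I would prove by induction on $n$ that every resident change time $R_n$ is solitary, which immediately yields $\mathrm R\Rightarrow\mathrm{SR}$ for contenders. In this regime every contender enters with initial slope exactly $c$ and slopes only decrease. For $n=1$, no kink has occurred before $R_1$, so the earliest-born contender $j_1$ reaches height $1$ first with $V_{j_1}(R_1-)=c$, and the kink of magnitude $c$ at $R_1$ drops every other alive contender from slope $c$ to slope $0$ and $V_0$ from $0$ to $-c$, giving SR. For the inductive step, granting SR at $R_n$ guarantees that every trajectory alive at $R_n$ other than the freshly resident one has non-positive slope, and by monotonicity of slopes none of them can ever grow to height $1$ again; therefore the next contender to reach $1$ is necessarily one, say $j_{n+1}$, born in $(R_n,R_{n+1})$ that has suffered no kink, so $V_{j_{n+1}}(R_{n+1}-)=c$, and the same bookkeeping as in the base case yields SR at $R_{n+1}$. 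The main point to watch is that, under a general $\gamma$, a kink of magnitude less than the initial slope of a freshly born competitor can leave that competitor with a residual positive slope and thereby break solitariness (precisely what happens to type $4$ at $R_1$ in Figure~\ref{fig-limitingexample}), whereas in the deterministic case every kink magnitude and every fresh slope equal exactly $c$, ruling out this possibility.
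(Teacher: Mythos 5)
Your proof is correct and takes essentially the same approach as the paper, spelling out details the paper leaves implicit: you justify $\mbox{UA}\Rightarrow\mbox{R}$ (which the paper dismisses as ``clear'') and carry out the induction showing every resident change is solitary in the deterministic case (which the paper delegates to the proof of Proposition~\ref{prop-1case}). For the counterexamples you use Figures~\ref{fig-limitingexample}--\ref{fig-genealogyexample} (types 3 and 4) instead of the paper's Figure~\ref{threemutants}; both are valid, and the paper itself discusses the type-3/type-6 configuration in the remark following the lemma.
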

\begin{proof}
    The second implication is clear. For the first one, assume that mutation $i$ becomes resident at some time $r$.  If there is no trajectory $H$ in $\mathscr H$ with $v_H(r) > 0$, then no mutation that happened before $r$ will become resident after time~$r$, and all the mutations happening after time $r$  will be descendants of $i$. The fact that the converse of the implications is not true in general is shown by Figure~\ref{threemutants}: There, the mutation corresponding to the blue trajectory is UA but not SR (but its green child is SR), while the mutation corresponding to the red trajectory is R but not UA.
    
    For the last assertion, observe that if fitness advantages are deterministic and fixed, then every resident change is solitary; cf.\ the proof of Proposition~\ref{prop-1case} in Section~\ref{sec-mainrenewalproofs}. 
\end{proof}
\begin{remark}(Fixation and solitary resident changes)

  \begin{enumerate}[a)]
    \item As a consequence of Lemma~\ref{compfixlemma}, the event whether the mutation born at time~$T_i$ goes to fixation is measurable with respect to the past of the first solitary resident change after $T_i$. However, this event is not measurable with respect to the past of $T_i$. Indeed, trajectories born after (but close to) time $T_i$ with initial slopes higher than $A_i$ may become resident before the $i$-th trajectory (and in that case, the $i$-th trajectory never becomes resident).
    \item It may well happen that a mutation becomes ultimately ancestral even though the clonal subpopulation belonging to this mutation becomes extinct before its first UA descendant becomes resident. 
    For an example, see Figure~\ref{fig-limitingexample}.
    Here, mutation 6 (orange) becomes resident at a solitary resident change time ($R_3$). Hence mutation 6 as well as its parent, mutation 3 (red), go to fixation. However, the type 3 subpopulation goes extinct before time $R_3$ due to its interference with the type 4 subpopulation (blue), which in turn is outcompeted by type~6.
\item By definition, every SR-mutation is ancestral to all mutations born after the time at which the SR-mutation became resident (and thus in particular is a UA-mutation). Conversely,
every UA-mutation $i$ is ancestral to any SR-mutation that becomes resident after $T_i$. (Indeed, assume that $j$ is an SR-mutation becoming resident at time $t$, and consider a mutation $i$ born before time $t$ that is non-ancestral to $j$.
Since all trajectories with positive height at time $t$, except the one belonging to $j$, have negative slope at time $t$, the offspring of $i$ will be extinct either before time $t$ or by some finite time after $t$, showing that $i$ cannot be UA.) 
 \item  
    As ensured by  Lemma~\ref{clusterlength}, the expected number of mutations between two subsequent SR-mutations is finite. Arguing as in item c), we thus see that (with probability 1) any mutant that has infinitely many descendants is ancestral to some SR-mutation (and therefore is UA). Hence the set of UA-mutations is the set of all ancestors of SR-mutations, and thus  constitutes the unique infinite path within the tree $\mathscr G$ introduced in Definition~\ref{genmut}. 
  \end{enumerate}
\end{remark}

\subsection{Heuristics for the speed of adaptation}\label{speedheur}

\begin{figure}
  \includegraphics[width=6.45cm,trim={0, 1.25cm, 0, 2cm},clip]
   {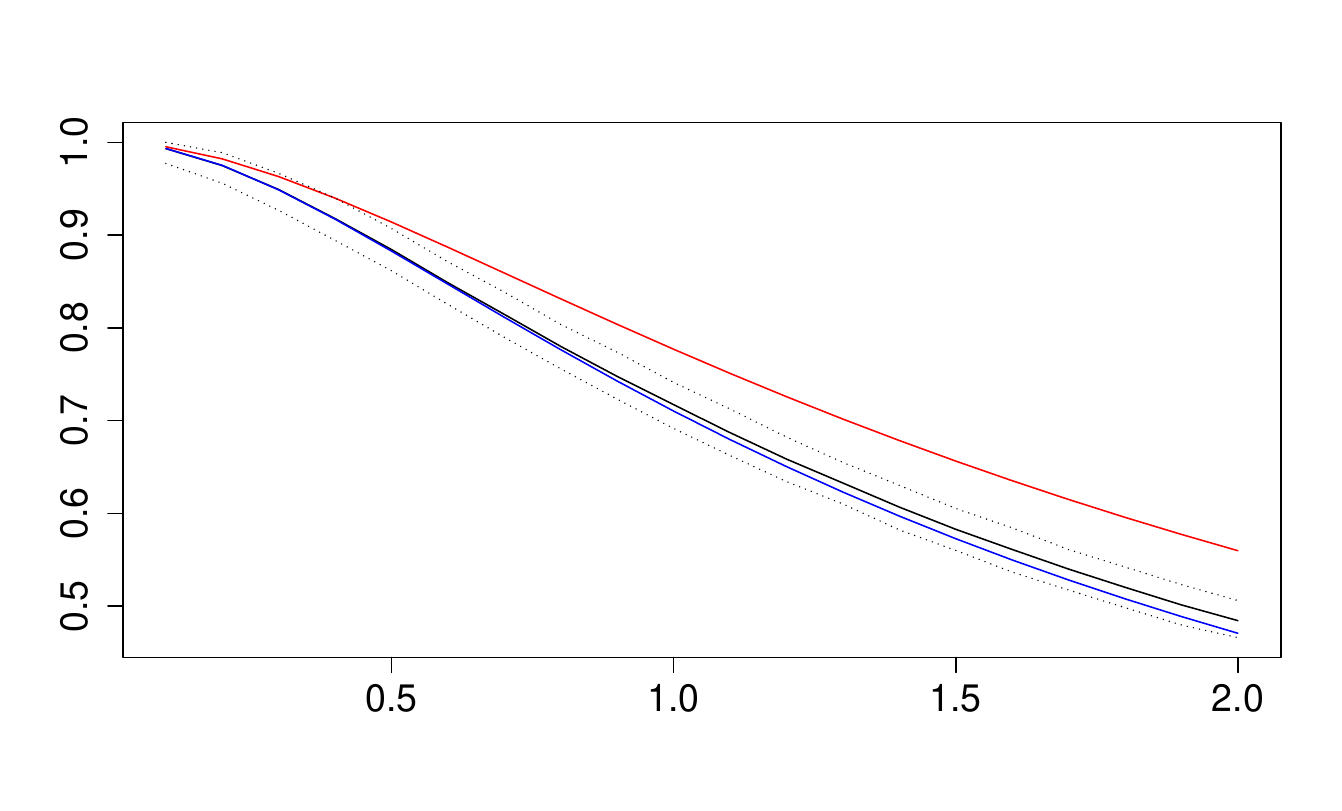}
  \includegraphics[width=6.45cm,trim={0, 1.25cm, 0, 2cm},clip]
   {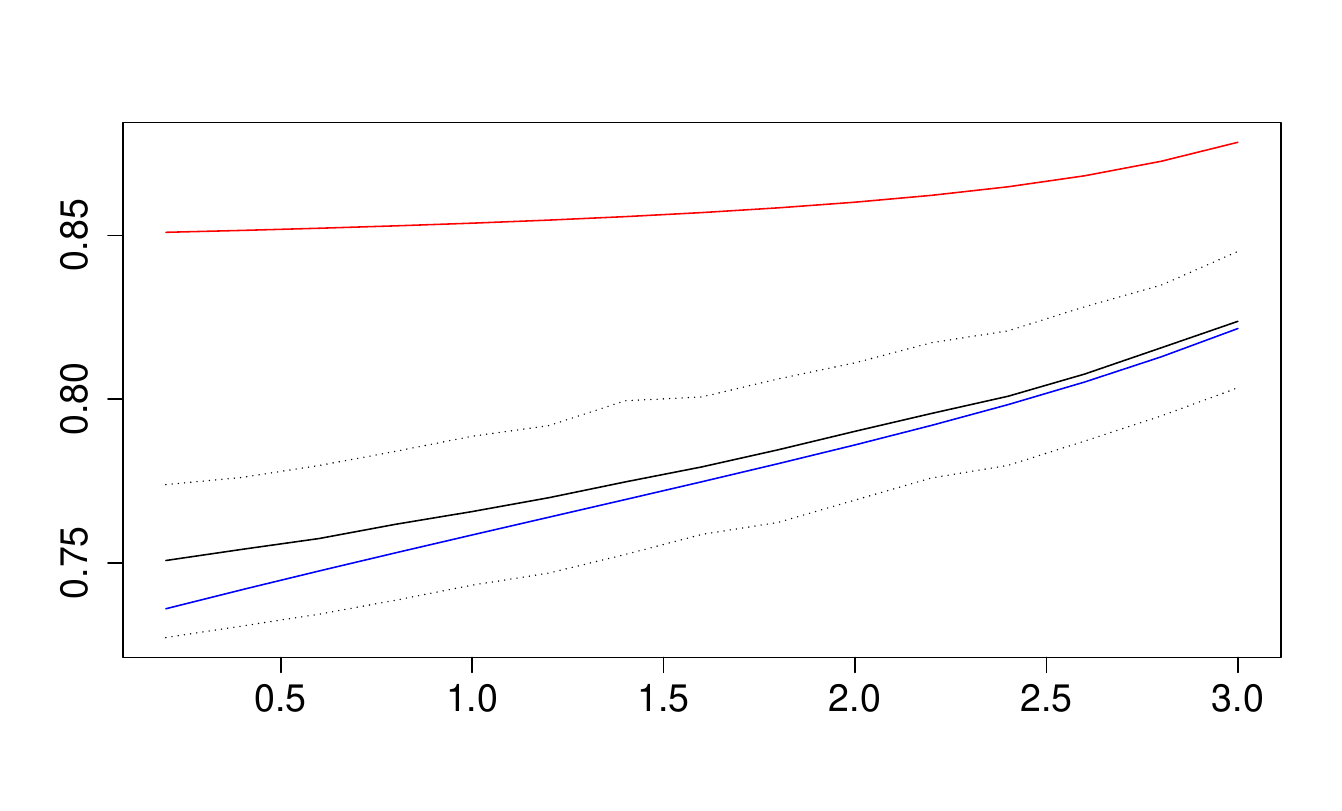}
  \\[-.3em]
  {\tiny$\lambda^*$\hspace{6.25cm}$\sigma$}
   \caption{\label{fig-GLh-sim}
     Simulations of speed of adaptation via $F(1000)/\sum_{i:T_i^*\leq1000}A_i^*$
     with $1000$ iterations.
     Black line: mean, black dotted: $95\%$ confidence interval, red: $\overline v_{\textnormal{GL}}/\lambda^*$, blue: $\overline v_{\textnormal{rGL}}/\lambda^*$.
     Left: For each parameter $\lambda^*\in\{0.1,0.2,\ldots,1.9,2\}$ simulation with
     $\gamma^*=\text{Exp}(1/\lambda^*)$. As $\lambda^*$ increases, expected total sum of increments
     remains constant, while effects of clonal interference increase (more mutations,
     longer fixation times) and hence the speed declines.
     Right: $\lambda^*=1$ and for each $\sigma\in\{0.2,0.4,\ldots,2.8,3\}$ we choose
     $\gamma^*=\textnormal{unif}([3-\sigma,3+\sigma])$. With $\sigma$, the variance of $\gamma^*$
     (i.e.\ $\sigma^2/3$) increases while its mean remains constant.
   }
\end{figure}

Gerrish and Lenski~\cite{GL98} proposed a heuristic for predicting the speed of adaptation which can be formulated and discussed in our  framework as follows. 

Consider a contender born at time $T^*_i$ with fitness increment $A^*_i$, and let $E_i$ be the event that its  trajectory is not kinked by a previous resident change. On the event $E_i$, this contender becomes solitary resident  if and only if between times $T^*_i$ and
$T_i^*+ (A_i^*)^{-1}$ there is no birth of another contender whose fitness increment is larger than $A^*_i$.  In other words, given the event $E_i$ {\em and} given $A_i^*=a$, the contender becomes solitary resident with probability 
\begin{align*}
  \pi_{\text{GL}}(a)
   &= \exp\Big(-\frac{\lambda^*}{ a}\gamma^*((a,\infty))\Big), \quad a > 0,
\end{align*}
where we recall $\lambda^*$ and $\gamma^*$ from Remark~\ref{remark-discarding0slopes}.

Retaining only such mutations (and neglecting the relevance of the events $E_i$) leads to the following prediction of the speed, called Gerrish--Lenski heuristics and abbreviated as GLh:
\begin{equation}\label{GLh}
    \overline v_{\text{GL}}:= \lambda^* \int a\, \pi_{\text{GL}}^{}(a)\,\gamma^*(\d a).
\end{equation}
Ignoring negative effects from the past by assuming $E_i$ naturally constitutes an overestimation of the speed, as confirmed in Figure~\ref{fig-GLh-sim}.

The \emph{refined} Gerrish--Lenski heuristics (rGLh) introduced by Baake et al.\ \cite{BGPW19} takes into account not only the future but also the past,
  by the following consideration:\\
  Denote by $K_i=\{j<i\mid T^\ast_j<T^\ast_i<T^\ast_j+(A^\ast_j)^{-1}, A^\ast_j\geq A^\ast_i\}$. This is the set of mutations $j$ born prior to $i$ that would kink the $i$th trajectory to a negative slope before it reaches $1$ and hence prohibit its becoming a resident -- provided that no further interference occurs. The rGLh now suggests the event $\{|K_i|=0\}$ as an approximation of $E_i$ leading to an (estimated) retainment probability given $A^\ast_i=a$ of
\begin{align*}
  \pi_{\text{rGL}}(a)
   &= \pi_{\text{GL}}(a)
        \cdot \exp\Big(-\lambda^*\int_{[a,\infty)}\frac1{b} \gamma^*(\d b)\Big),
\end{align*}
and the prediction $\overline v_{\text{rGL}}$ for the speed is as in~\eqref{GLh}, now with $\pi_{\text{rGL}}$ in place of $\pi_{\text{GL}}$.

While Figure~\ref{fig-GLh-sim} confirms that the rGLh gives a generally much more accurate estimate than the GLh, in most cases it \emph{under}estimates the speed of adaptation.
Indeed there exist instances of configurations where out of three consecutive mutations, the first and the last one contribute to the eventual increase of the population fitness and the middle one does not, in spite of the fact that only the middle one would be retained according to the refined Gerrish--Lenski heuristics, see Figure~\ref{threemutants} for an example. This may (at least partially) explain this underestimation.
It is conceivable that a more thorough analysis of the ``clusters of trajectories'' addressed in Remark~\ref{FalongL}, which takes into account also higher order interactions than the rGLh, leads to a further refinement of the Gerrish--Lenski heuristics.

\begin{figure}\label{fig-kickevenmore}
\scalebox{0.85}{
\begin{tikzpicture}
  \draw[scale=3.7, black, ->] (0.9, 0) -- (5.05, 0) node[right] {time};
  \draw[scale=3.7, black, ->] (0.9, -0.1) -- (0.9, 1.2) node[above] {};
  \draw[scale=3.7,black,thick] (0.87,1) node[left] {$1$};
    \draw[scale=3.7,black,thick] (0.87,0) node[left] {$0$};
  \draw[scale=3.7, blue,thick] (1,-0.1) node[below] {$ T_1$};
  \draw[scale=3.7, blue,thick] (1.5,0.5) node[above] {$a$};
  \draw[scale=3.7, red,thick] (1.8,-0.1) node[below] {$ T_2$};
   \draw[scale=3.7, red,thick] (1.85,0.1) node[above] {$b$};
    \draw[scale=3.7, red,thick] (2.83,0.75) node[above] {$b-a$};
       \draw[scale=3.7, black!40!green,thick] (2.4,0.1) node[above] {$c$};
           \draw[scale=3.7, black!40!green,thick] (3.65,0.65) node[above] {$a+c-b$};
  \draw[scale=3.7, blue,thick] (2.1,-0.08) node[below] {$ T_1+\smfrac{1}{a}$};
    \draw[scale=3.7, black!40!green,thick] (2.38,-0.1) node[below] {$ T_3$};
    \draw[scale=3.7, red,thick] (2.62,-0.08) node[below] {$T_2+\smfrac{1}{b}$};
 \draw[scale=3.7, red,thick] (3.4,-0.1) node[below] {$t$};
    \draw[scale=3.7, black!40!green,thick] (3.95,-0.08) node[below] {$L_1$};
      \draw[scale=3.7, domain=0.9:2, smooth, variable=\x, black,thick] plot ({\x}, {1});
  \draw[scale=3.7, domain=1:2, smooth, variable=\x, blue,thick] plot ({\x}, {\x-1});
  \draw[scale=3.7, domain=1.8:2, smooth, variable=\x, red,thick] plot ({\x}, {1.5*(\x-1.8)});
   \draw[scale=3.7, domain=2:2.46, dashed, variable=\x, black,thick] plot ({\x}, {0.99});
   \draw[scale=3.7, domain=2.48:3.13, dashed, variable=\x, black,thick] plot ({\x}, {1-1.5*(\x-2.47)});
      \draw[scale=3.7, domain=2.47:5, dashed, variable=\x, red,thick] plot ({\x}, {0.99});
   \draw[scale=3.7, domain=2:2.46, dashed, variable=\x, red,thick] plot ({\x}, {1.5*(\x-1.8)});
  \draw[scale=3.7, domain=2:3, smooth, variable=\x, black,thick] plot ({\x}, {-1*(\x-2)+1});
   \draw[scale=3.7, domain=2:3.4, smooth, variable=\x, red,thick] plot ({\x}, {0.5*(\x-2)+0.3});
    \draw[scale=3.7, domain=2:3.46, smooth, variable=\x, blue,thick] plot ({\x}, {1});
     \draw[scale=3.7, domain=2.35:3.4, smooth, variable=\x, black!40!green,thick] plot ({\x}, {0.8*(\x-2.35)}); 
     \draw[scale=3.7, domain=2.47:2.605, dashed, variable=\x, black!40!green,thick] plot ({\x}, {-0.7*(\x-2.47)+0.096}); 
          \draw[scale=3.7, domain=3.4:3.93, smooth, variable=\x, black!40!green,thick] plot ({\x}, {0.3*(\x-3.4)+0.84}); 
%
  \draw[scale=3.7, domain=3.4:3.93, smooth, variable=\x, blue,thick] plot ({\x}, {-0.5*(\x-3.4)+1});
   \draw[scale=3.7, domain=3.93:4.85, smooth, variable=\x, blue,thick] plot ({\x}, {-0.8*(\x-3.93)+0.735});
    \draw[scale=3.7, domain=3.4:3.93, smooth, variable=\x, red,thick] plot ({\x}, {1});
        \draw[scale=3.7, domain=3.93:5, smooth, variable=\x, black!40!green,thick] plot ({\x}, {1});
            \draw[scale=3.7, domain=3.93:5, smooth, variable=\x, red,thick] plot ({\x}, {-0.3*(\x-3.93)+1});
        \draw[scale=3.7, black] (1.6,1.18) node[below] {$0$};
        \draw[scale=3.7, blue] (2.7,1.18) node[below] {$a$};
        \draw[scale=3.7, red] (3.7,1.18) node[below] {$b$};
        \draw[scale=3.7, black!40!green] (4.3,1.18) node[below] {$a+c$};
\end{tikzpicture}
}
\caption{A realisation of the PIT in which the second of three consecutive contending mutations (path shown by red solid line) does not contribute to the eventual increase in population fitness -- i.e.\ its increment $b$ is not contained in the final fitness $a+c$, which is composed of only the increments of the first (blue) and third (green) mutation.
In contrast to this,
the refined Gerrish--Lenski heuristics (see Sec.~\ref{speedheur}) would not take into account the first kink of the red trajectory (since $a<b$) and would rather see the first trajectory as being killed by the second one (when used for determining whether the first one is retained). Hence, this heuristics would  predict a final fitness of $b$. 
This would lead to the continuation of the second trajectory by the dashed red line, and thus also to a killing of the third trajectory according to the rGLh. Above the height line 1 we display the values of the resident fitness $F(t)=M_{\rho(t)}$ during each residency interval.
}
\label{threemutants}
\end{figure}

\section{Possible model extensions}\label{modext}

\subsection{General type space}\label{sec:THE-genar}

  Instead of understanding a type in terms of its fitness and time of arrival, one could think of types in a more abstract manner, i.e.\ as elements of a (measurable) \emph{type space} $(\Theta,\cA)$. Mutation occurring in an individual $i$ would then assign a new (random) type $\vartheta_i$ to it, distributed as $\mu(\vartheta_j,\cdot)$, where $\vartheta_j$ is type of parent individual $j$ and $\mu:\Theta\times\cA\to[0,1]$ is a probability kernel. Then, between mutations, the evolution of the  clonal subpopulations in the corresponding generalized Moran model could be described by the generator
  \[
    Lf(x)
      = \frac1N\sum_{i\neq j}x_ix_j(1+c(\vartheta_i,\vartheta_j)^+)(f(x+e_i-e_j)-f(x)),
  \]
  where $c\in\R^{\Theta\times\Theta}$ can be viewed as a \emph{competition matrix}.
  (Note that taking $\Theta=[0,\infty)$, $\mu(\theta,\cdot)=\delta_\theta\ast\gamma$ and
  $c(\theta,\vartheta)=\theta-\vartheta$ recovers the Moran model in Section~\ref{sec-model}.)
  It is conceivable that this generalized model might be used to incorporate \emph{slowdown effects} that produce strict concavity in population fitness as observed in the Lenski
  experiment (see Fig. 2 in \cite{WRL13}).

  We postulate that with similar methods based on Lemmas~\ref{lem:multitype-without-kinks} and~\ref{lem:multitype-sweep} one should arrive at a corresponding scaling limit result -- possibly even when allowing $c$ to vary over time.
  However, the coupling used for the quenched convergence result would have to become much more involved. Also, in the limiting system new challenges might arise, such as cyclic effects
  providing infinitely many resident changes from finitely many mutations, possibly even in
  finite time; similarly to \cite[Examples 3.2, 3.5 and 3.6]{BPT23} and \cite[Example 3.6]{CKS21}; Figure~\ref{figure-cyclicity} for an illustration. We defer more detailed discussions to future work.

\begin{figure}
\begin{tikzpicture}[scale=2]

    \draw[->,black] (0, 0) -- (1.6, 0) node[right] {$t$};
    \draw[->,black] (0, 0) -- (0, 1.05) node[above] {};

    \draw[black] (-0.03,1) node[left] {\footnotesize $1$};
    \draw[black] (-0.03,0) node[left] {\footnotesize $0$};
 
    \draw[black] (1,-0.03) node[below] {\footnotesize $1$};

    \draw[domain=0:0.5, variable=\x, black,thick] plot ( {\x}, {1-\x} );
    \draw[domain=0.5:1, variable=\x, black,thick] plot ( {\x}, {0.5 + (\x-0.5)} );
    \draw[domain=1:1.5, variable=\x, black,thick] plot ( {\x}, {1} );
    \draw[domain=1.5:1.55, variable=\x, black,thick] plot ( {\x}, {1 - (\x-1.5)} );
    \draw[domain=1.55:1.6, variable=\x,dotted, black,thick] plot ( {\x}, {1 - (\x-1.5)} );

    \draw[domain=0:0.5, variable=\x,brown,thick] plot ( {\x}, {1} );
    \draw[domain=0.5:1, variable=\x,brown,thick] plot ( {\x}, {1 - (\x-0.5)} );
    \draw[domain=1:1.5, variable=\x,brown,thick] plot ( {\x}, {0.5 + (\x-1)} );
    \draw[domain=1.5:1.55, variable=\x,brown,thick] plot ( {\x}, {1} );
    \draw[domain=1.55:1.6, variable=\x,brown,dotted] plot ( {\x}, {1} );

    \draw[domain=0:0.5, variable=\x,blue,thick] plot ( {\x}, {0.5+\x} );
    \draw[domain=0.5:1, variable=\x,blue,thick] plot ( {\x}, {1} );
    \draw[domain=1:1.5, variable=\x,blue,thick] plot ( {\x}, {1 - (\x-1)} );
    \draw[domain=1.5:1.55, variable=\x,blue,thick] plot ( {\x}, {0.5 + (\x-1.5)} );
    \draw[domain=1.55:1.6, variable=\x,blue,dotted] plot ( {\x}, {0.5 + (\x-1.5)} );

\end{tikzpicture}
\begin{tikzpicture}[scale=2]

    \draw[->,black] (0, 0) -- (2.1, 0) node[right] {$t$};
    \draw[->,black] (0, 0) -- (0, 1.05) node[above] {};

    \draw[black] (-0.03,1) node[left] {\footnotesize $1$};
    \draw[black] (-0.03,0) node[left] {\footnotesize $0$};
 
    \draw[black] (1,-0.03) node[below] {\footnotesize $1$};
    \draw[black] (2,-0.03) node[below] {\footnotesize $2$};

    \draw[domain=0:0.5, variable=\x, black,thick] plot ( {\x}, {1-0.5*\x} );
    \draw[domain=0.5:(2/3), variable=\x, black,thick] plot ( {\x}, {0.75 + 1.5*(\x-0.5)} );
    \draw[domain=(2/3):1, variable=\x, black,thick] plot ( {\x}, {1} );
    \draw[domain=1:1.5, variable=\x, black,thick] plot ( {\x}, {1-0.5*(\x-1)} );
    \draw[domain=1.5:(5/3), variable=\x, black,thick] plot ( {\x}, {0.75 + 1.5*(\x-1.5)} );
    \draw[domain=(5/3):2, variable=\x, black,thick] plot ( {\x}, {1} );
    \draw[domain=2:2.05, variable=\x, black,thick] plot ( {\x}, {1-0.5*(\x-2)} );
    \draw[domain=2.05:2.1, variable=\x, dotted, black,thick] plot ( {\x}, {1-0.5*(\x-2)} );

    \draw[domain=0:0.5, variable=\x,brown,thick] plot ( {\x}, {1} );
    \draw[domain=0.5:(2/3), variable=\x,brown,thick] plot ( {\x}, {1 - (\x-0.5)} );
    \draw[domain=(2/3):1, variable=\x,brown,thick] plot ( {\x}, {5/6 + 0.5*(\x-2/3)} );
    \draw[domain=1:1.5, variable=\x,brown,thick] plot ( {\x}, {1} );
    \draw[domain=1.5:(5/3), variable=\x,brown,thick] plot ( {\x}, {1 - (\x-1.5)} );
    \draw[domain=(5/3):2, variable=\x,brown,thick] plot ( {\x}, {5/6 + 0.5*(\x-5/3)} );
    \draw[domain=2:2.05, variable=\x,brown,thick] plot ( {\x}, {1} );
    \draw[domain=2.05:2.1, variable=\x,brown,dotted] plot ( {\x}, {1} );

    \draw[domain=0:0.5, variable=\x,blue,thick] plot ( {\x}, {0.5+\x} );
    \draw[domain=0.5:(2/3), variable=\x,blue,thick] plot ( {\x}, {1} );
    \draw[domain=(2/3):1, variable=\x,blue,thick] plot ( {\x}, {1 - 1.5*(\x-(2/3))} );
    \draw[domain=1:1.5, variable=\x,blue,thick] plot ( {\x}, {0.5+(\x-1)} );
    \draw[domain=1.5:(5/3), variable=\x,blue,thick] plot ( {\x}, {1} );
    \draw[domain=(5/3):2, variable=\x,blue,thick] plot ( {\x}, {1 - 1.5*(\x-(5/3))} );
    \draw[domain=2:2.05, variable=\x,blue,thick] plot ( {\x}, {0.5+(\x-2)} );
    \draw[domain=2.05:2.1, variable=\x,blue,dotted] plot ( {\x}, {0.5+(\x-2)} );

\end{tikzpicture}
\begin{tikzpicture}[scale=2]

    \draw[->,black] (0, 0) -- (2.1, 0) node[right] {$t$};
    \draw[->,black] (0, 0) -- (0, 1.05) node[above] {};

    \draw[black] (-0.03,1) node[left] {\footnotesize $1$};
    \draw[black] (-0.03,0) node[left] {\footnotesize $0$};
 
    \draw[black] (1,-0.03) node[below] {\footnotesize $1$};
    \draw[black] (2,-0.03) node[below] {\footnotesize $2$};

    \draw[domain=0:0.5, variable=\x, black,thick] plot ( {\x}, {1-\x} );
    \draw[domain=0.5:1, variable=\x, black,thick] plot ( {\x}, {0.5 + (\x-0.5)} );

    \draw[domain=1:1.25, variable=\x, black,thick] plot ( {\x}, {1} );
    \draw[domain=1.25:1.5, variable=\x, black,thick] plot ( {\x}, {1 - 2*(\x-1.25)} );

    \draw[domain=1.5:1.625, variable=\x, black,thick] plot ( {\x}, {0.5 + 4*(\x-1.5)} );
    \draw[domain=1.625:1.75, variable=\x, black,thick] plot ( {\x}, {1} );

    \draw[domain=1.75:1.8125, variable=\x, black,thick] plot ( {\x}, {1 - 8*(\x-1.75)} );
    \draw[domain=1.8125:1.875, variable=\x, black,thick] plot ( {\x}, {0.5 + 8*(\x-1.8125)} );

    \draw[domain=1.875:1.90625, variable=\x, black,thick] plot ( {\x}, {1} );
    \draw[domain=1.90625:1.9375, variable=\x, black,thick] plot ( {\x}, {1 - 16*(\x-1.90625)} );

    \draw[domain=0:0.5, variable=\x,brown,thick] plot ( {\x}, {1} );
    \draw[domain=0.5:1, variable=\x,brown,thick] plot ( {\x}, {1 - (\x-0.5)} );

    \draw[domain=1:1.25, variable=\x,brown,thick] plot ( {\x}, {0.5 + 2*(\x-1)} );
    \draw[domain=1.25:1.5, variable=\x,brown,thick] plot ( {\x}, {1} );

    \draw[domain=1.5:1.625, variable=\x,brown,thick] plot ( {\x}, {1 - 4*(\x-1.5)} );
    \draw[domain=1.625:1.75, variable=\x,brown,thick] plot ( {\x}, {0.5 + 4*(\x-1.625)} );

    \draw[domain=1.75:1.8125, variable=\x,brown,thick] plot ( {\x}, {1} );
    \draw[domain=1.8125:1.875, variable=\x,brown,thick] plot ( {\x}, {1 - 8*(\x-1.8125)} );

    \draw[domain=1.875:1.90625, variable=\x,brown,thick] plot ( {\x}, {0.5 + 16*(\x-1.875)} );
    \draw[domain=1.90625:1.9375, variable=\x,brown,thick] plot ( {\x}, {1} );

    \draw[domain=0:0.5, variable=\x,blue,thick] plot ( {\x}, {0.5+\x} );
    \draw[domain=0.5:1, variable=\x,blue,thick] plot ( {\x}, {1} );

    \draw[domain=1:1.25, variable=\x,blue,thick] plot ( {\x}, {1 - 2*(\x-1)} );
    \draw[domain=1.25:1.5, variable=\x,blue,thick] plot ( {\x}, {0.5 + 2*(\x-1.25)} );

    \draw[domain=1.5:1.625, variable=\x,blue,thick] plot ( {\x}, {1} );
    \draw[domain=1.625:1.75, variable=\x,blue,thick] plot ( {\x}, {1 - 4*(\x-1.625)} );

    \draw[domain=1.75:1.8125, variable=\x,blue,thick] plot ( {\x}, {0.5 + 8*(\x-1.75)} );
    \draw[domain=1.8125:1.875, variable=\x,blue,thick] plot ( {\x}, {1} );

    \draw[domain=1.875:1.90625, variable=\x,blue,thick] plot ( {\x}, {1 - 16*(\x-1.875)} );
    \draw[domain=1.90625:1.9375, variable=\x,blue,thick] plot ( {\x}, {0.5 + 16*(\x-1.90625)} );

    \draw[dotted] (1.9375,1) -- (2,1);
    \draw[dotted] (1.9375,0.5) -- (2,0.5);
    \draw[dotted] (2,0) -- (2,1);

\end{tikzpicture}
\caption{
Illustration of cyclicity. In each case, $\Theta=\{0,{\color{brown}1},{\color{blue}2}\}$,
$h_0(0)=1, h_{\color{brown}1}(0)=1$, $h_{\color{blue}2}(0)=\frac12$.
Left: $c({\color{brown}1},0)=c({\color{blue}2},{\color{brown}1})=c(0,{\color{blue}2})=1$.
Mid: $c({\color{brown}1},0)=0.5,c({\color{blue}2},{\color{brown}1})=1,c(0,{\color{blue}2})=1.5$.
Right: $c(t,{\color{brown}1},0)=c(t,{\color{blue}2},{\color{brown}1})=c(t,0,{\color{blue}2})=2^{-\lfloor\log_2(2-t)\rfloor}\1_{\{t<2\}}$.
}\label{figure-cyclicity}
\end{figure}

\subsection{Moderate and nearly strong selection}\label{sec-weakerselection} 

\begin{figure}
\includegraphics[width=4.7cm,trim={0.5cm, 1cm, 0.8cm, 2cm},clip]{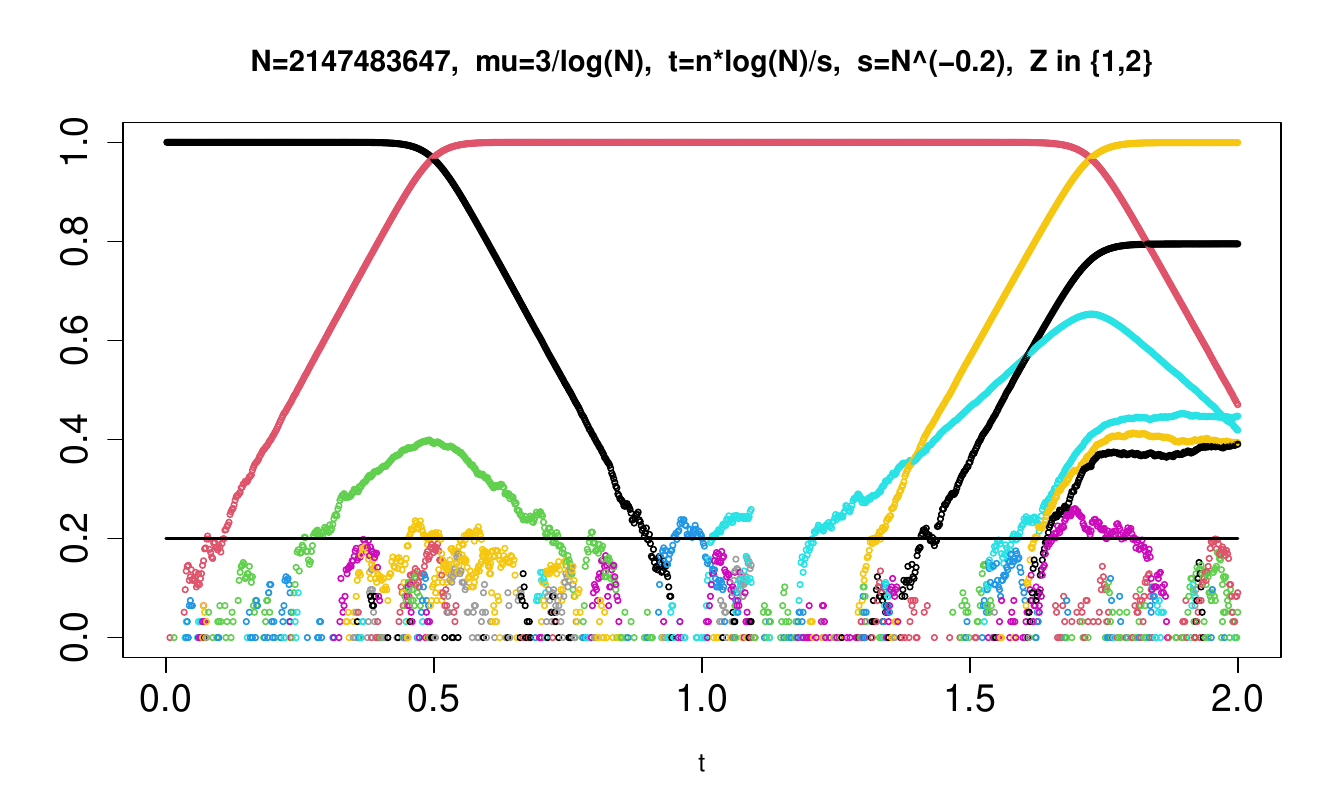}
\includegraphics[width=4.7cm,trim={0.5cm, 1cm, 0.8cm, 2cm},clip]{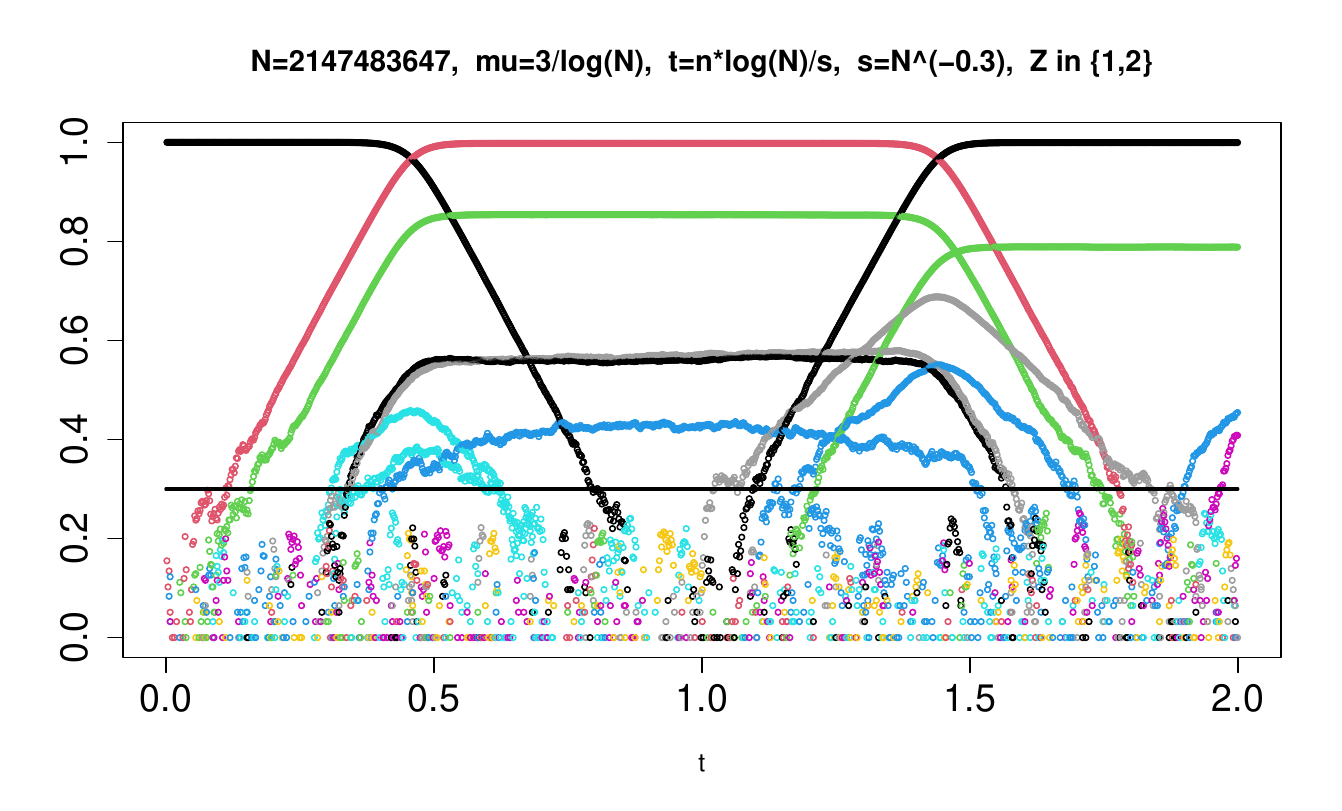}
\includegraphics[width=4.7cm,trim={0.5cm, 1cm, 0.8cm, 2cm},clip]{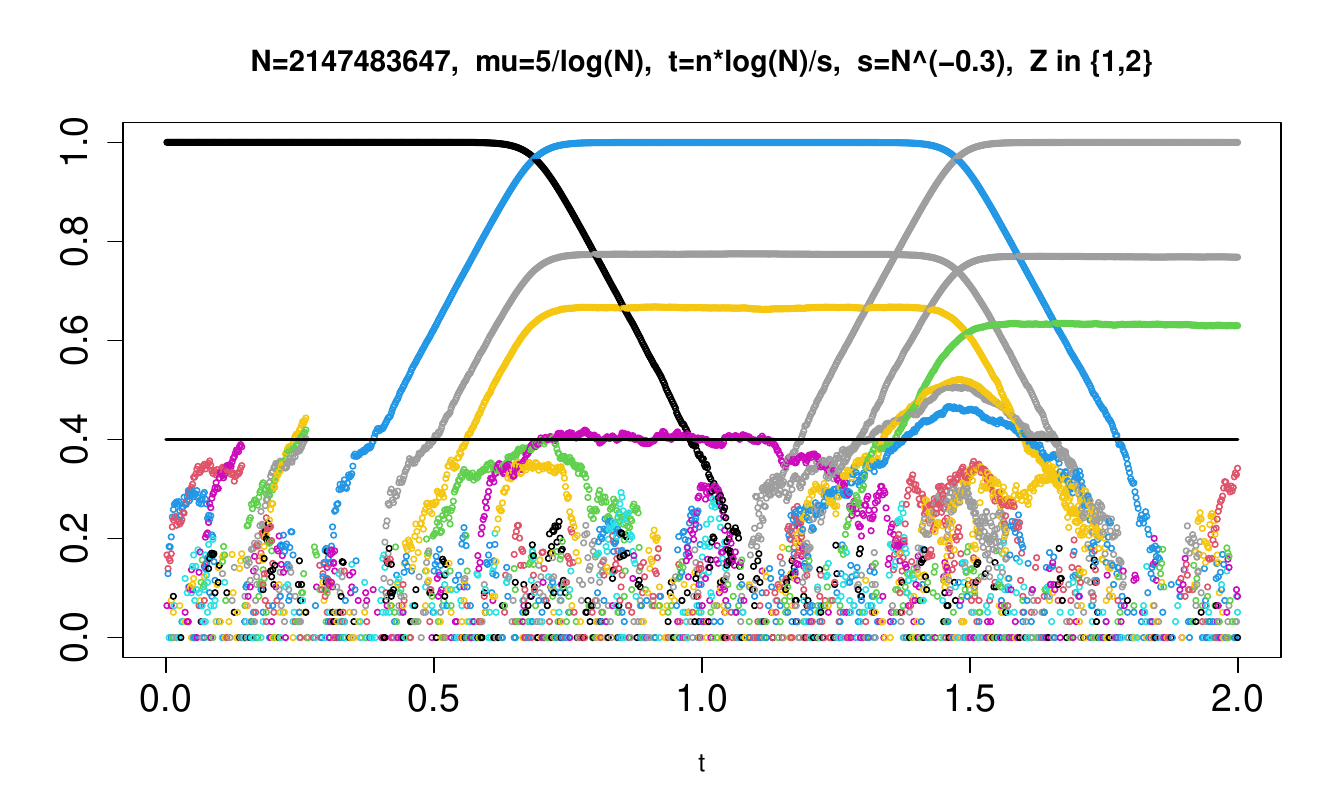}
\caption{\label{fig:moderate-selection}
  Simulation of a Cannings model with mutation and selection, $N\approx2.1\cdot10^{9}$, under moderate selection, i.e.\ $s_N=N^{-b}$, on timescale $N^b\log N$.
  Left: $b=0.2$, middle: $b=0.3$, right: $b=0.4$, each indicated by the horizontal line.
}
\end{figure}

In mathematical population genetics \emph{weak selection} classically refers to the scaling regime where fitness increments are of the order of~$1/N$.
In contrast, as we already mentioned, \emph{strong selection} means that $s_N=s$ does not scale with $N$. In the latter regime, in the proof of  Theorem~\ref{theorem-THE}, we exploited that
the frequency of all mutations, including non-contending ones, arising in finite time
stays finite in the scaling limit. This is never true for $s_N \to 0$ as $N\to \infty$; then only an asymptotically vanishing fraction $\Theta(s_N)$ of mutants survives drift. This makes the analysis more involved since 
then the supercriticality of the branching processes that approximate the clonal subpopulations tends to $0$ as $N\to \infty$.

As can be seen from the formula for the fixation probability of a single mutant with selective advantage $s_N$ in the Moran model (see e.g. \cite[Theorem 6.1]{durrett2008probability}), the probability that such a mutant becomes a contender should be  of order $s_N$ provided that $\frac 1N \ll s_N \ll 1$. Thus a mutation rate $1/\log N$ will lead to the arrival of finitely many contending mutations in a time interval of length $s_N^{-1}\log N$; this is also the (order of) time that a contender which ever becomes resident takes to reach residency. 

One interesting regime is that of~\emph{moderate selection}, where $s_N \asymp N^{-b}$ for some $0<b<1$. Recent results show that Haldane's formula for the probability that a mutation becomes contending applies not only in Moran models but also in Cannings models for the case of \emph{moderately weak} selection ($s_N \asymp N^{-b}$, \mbox{$\tfrac 12 <b<1$)} (\cite{B21ii}) as well as for the case of \emph{moderately strong} selection ($s_N \asymp N^{-b}$, \mbox{$0<b<\tfrac 12$)} (\cite{B21i}), and thus the probability that a given mutation becomes contending is of order $s_N$. Simulations based on a Cannings model (see Figure~\ref{fig:moderate-selection}) indicate that moderate selection yields a similar limiting process as the PIT, however, piecewise linear trajectories now start and end at height $b$ instead of $0$. That is, we conjecture that contending mutant subpopulations reach size $N^b$ in $o(s_N^{-1}\log N)$ time and decaying subpopulations of size $o(N^b)$ go extinct in $o(s_N^{-1}\log N)$ time.

The regime that is intermediate between moderate and strong selection, where $s_N$ tends to zero as a slowly varying function of $N$, is also interesting to study. 
We call this the regime of \emph{nearly strong selection}.  Here the limiting process of the logarithmic clonal subpopulation sizes on that timescale should again be similar to the PIT, with trajectories of the contending mutants born at height 0.

We defer the precise investigation of these regimes to future work.

\begin{appendix}

\section{Supercritical branching}
\label{s:superbranch}

In this section we consider a continuous-time binary Galton--Watson process $Z=(Z_t)_{t \geq 0}$ with individual birth and death rates $b\geq0$, $d\geq0$ respectively, satisfying $s:= b-d>0$. We denote by $\P_z$ the law of $Z$ started at $z$, 
and we abbreviate $\{Z\not\to 0\} := \{Z_t \geq 1, \forall t \geq 0\}$, $\{Z \to 0\} := \{Z \not \to 0\}^c$.

\begin{theorem}\label{thm:superbranch}
Let $Z$ be as above. Then:
\begin{enumerate}
\item[1)] For $z \in \N$, $\P_z(Z \not \to 0) = 1- (d/b)^z \in (0,1]$.

\item[2)]
The family of random variables $(\Xi_z)_{z \in \N}$ given by
\[
  \Xi_z
   := \sup_{t \geq 0} \big| \log^+(Z_{t}) - (\log z + s t) \1_{\{Z \not \to 0\}} \big|
   \quad \text{under} \quad \P_{z}
\]
is tight. In particular, $\P_z(\Xi_z<\infty)=1$ for all $z \in \N$.
Moreover, there exists a constant $C \in (0,\infty)$ such that
$
\lim_{z \to \infty} \P_z( \Xi_z > C) = 0.
$

\item[3)] Let $T_0 := \inf\{t \geq 0 \mid Z_t=0\}$ and, for $L > 0$, $T_L := \inf\{t \geq 0 \mid Z_t \geq L\}$.
Let $z_L \in \N$. Then:
\begin{enumerate}
\item[a)]If $t_L \to \infty$ then $\P_{z_L}(T_0 \geq t_L) \sim \P_{z_L}(Z \not \to 0)$ as $L \to \infty$.
\item[b)] $\P_{z_L}(T_L < \infty) \sim \P_{z_L}(Z \not \to 0)$ as $L \to \infty$.
\item[c)] If $z_L = o(L)$ then $\lim_{L \to \infty} \frac{T_L}{\log(L/z_L)}=\frac 1s$ in probability under $\P_{z_L}(\cdot|Z \not \to 0)$.\\
If $z_L \equiv 1$ this also holds almost surely. 
\end{enumerate}
\end{enumerate}
\end{theorem}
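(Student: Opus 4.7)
The key technical tool for all three parts is the non-negative martingale $M_t := Z_t e^{-st}$, which satisfies $\E_z[M_t] = z$ and, because binary branching trivially has finite offspring moments, converges almost surely and in $L^2$ (Kesten--Stigum) to a limit $W^{(z)}$ with $\E[W^{(z)}] = z$ and $\{W^{(z)} > 0\} = \{Z \not\to 0\}$ up to null sets. Part 1) is then classical: the extinction probability $q := \P_1(Z \to 0)$ solves $bq^2 - (b+d)q + d = 0$ (the fixed-point equation for the backward generating-function evolution), giving $q = d/b \in [0,1)$, and independence of the sub-trees rooted at the $z$ ancestors yields $\P_z(Z \to 0) = q^z$.

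For Part 2), the tightness of $\Xi_z$ at each fixed $z$ splits along $\{Z \to 0\}$ and $\{Z \not\to 0\}$: on extinction $\sup_t Z_t$ is a.s.\ finite because the process hits $0$ after finitely many jumps, whereas on survival I would write $\log^+ Z_t - (\log z + st) = \log(M_t/z) + O(1)$ (the $O(1)$ absorbing the difference between $\log^+$ and $\log$) and combine Doob's inequality giving $\sup_t M_t < \infty$ a.s.\ with the c\`adl\`ag positivity of $M_t$ on $\{t<T_0\}$ and the convergence $M_t \to W^{(z)} > 0$ to force $\inf_t M_t > 0$ a.s.\ on survival. The uniform decay as $z \to \infty$ is the main technical step and is driven by additivity-in-$z$: under $\P_z$, $M_t^{(z)}/z$ has the same distribution as $z^{-1}\sum_{i=1}^z M_t^{(1,i)}$ for iid single-ancestor copies, so $\mathrm{Var}(M_t^{(z)}/z) \leq \mathrm{Var}(W^{(1)})/z$ for all $t$. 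Doob's $L^2$-inequality applied to the martingale $M_t^{(z)}/z - 1$ then gives $\E[\sup_t(M_t^{(z)}/z - 1)^2] \leq 4\,\mathrm{Var}(W^{(1)})/z \to 0$, so $\sup_t|\log(M_t^{(z)}/z)| \to 0$ in probability on survival, while the extinction contribution is killed by $\P_z(Z \to 0) = (d/b)^z \to 0$. Joint tightness of the family $(\Xi_z)$ then follows from pointwise tightness for $z < z_0$ combined with the uniform estimate for $z \geq z_0$.

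The formulas in Part 3) are explicit enough to bypass heavy machinery. For 3a), $\P_z(T_0 \geq t) = 1 - p_t^z$ with $p_t := d(e^{st}-1)/(be^{st}-d)$ satisfying $d/b - p_t = \Theta(e^{-st})$; splitting into the cases $z_L$ bounded (where $p_{t_L}^{z_L}\to(d/b)^{z_L}$ termwise since $p_{t_L}\to d/b$) and $z_L \to \infty$ (where both $p_{t_L}^{z_L}$ and $(d/b)^{z_L}$ tend to $0$ so the ratio $(1-p_{t_L}^{z_L})/(1-(d/b)^{z_L}) \to 1$) proves the asymptotic equivalence. For 3b), $\{T_L < \infty\}$ coincides with the event that the discrete embedded random walk with up-step probability $b/(b+d)$ reaches $L$ before $0$, and the gambler's-ruin identity gives $\P_{z_L}(T_L < \infty) = (1 - r^{z_L})/(1 - r^L)$ with $r = d/b$, so dividing by $\P_{z_L}(Z \not\to 0) = 1 - r^{z_L}$ leaves $1/(1-r^L) \to 1$. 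For 3c), unit jump sizes force $Z_{T_L} = L$, so $s T_L = \log L - \log z_L - \log(M_{T_L}/z_L)$; the main difficulty is controlling the last term when $z_L$ itself diverges, and this is precisely what the uniform-in-$z$ estimate from Part 2) delivers, showing the correction is $o(\log(L/z_L))$ in probability under $\P_{z_L}(\cdot\mid Z \not\to 0)$. When $z_L \equiv 1$, the a.s.\ convergence $M_t \to W^{(1)} > 0$ on survival bounds the correction a.s., giving the almost-sure refinement.
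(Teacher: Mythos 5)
Your proof is correct and, for Parts 2) and 3a)--3b), takes a genuinely different route from the paper's appendix. In Part 2) both arguments use the decomposition $Z \overset{d}{=} \sum_{k=1}^z Z^{(k)}$ into i.i.d.\ single-ancestor copies and the $L^2$-bounded martingale $M_t := Z_t e^{-st}$, but where the paper applies the strong law of large numbers to $\inf_t Z^{(k)}_t e^{-st}$ and $\sup_t Z^{(k)}_t e^{-st}$ (after checking the former has positive mean and the latter finite mean via Doob's $L^2$ inequality), you apply Doob's $L^2$ inequality directly to the centered martingale $M_t^{(z)}/z - 1$, exploit the variance decay $\mathrm{Var}(M_t^{(z)}/z) = \mathrm{Var}(M_t^{(1)})/z \le \mathrm{Var}(W^{(1)})/z$, and conclude by Chebyshev with an explicit rate $O(1/z)$; this is a bit tighter and avoids verifying the two one-sided moment conditions separately. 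In Parts 3a)--3b) the paper argues by soft means (passing to subsequences $z_L \to z$ or $z_L \to \infty$, tightness of $T_0$ conditionally on extinction, monotonicity), while you invoke the explicit time-$t$ extinction probability $p_t = d(e^{st}-1)/(be^{st}-d)$ and the gambler's-ruin identity $\P_{z}(T_L<\infty)=(1-r^{z})/(1-r^{L})$ with $r=d/b$ for the embedded walk, reducing the equivalences to elementary algebra; the paper's route is more portable beyond linear birth--death chains, yours is more transparent for the case at hand. Part 3c) matches the paper's (which simply defers to Part 2)): you correctly unwind $sT_L = \log Z_{T_L} - \log z_L - \log(M_{T_L}/z_L)$ and pass to the conditional law $\P_{z_L}(\cdot\mid Z\not\to0)$ harmlessly since $\P_{z_L}(Z\not\to 0)\ge 1-d/b>0$. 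One small imprecision worth fixing: in 3a) the phrase ``$p_{t_L}^{z_L}\to(d/b)^{z_L}$ termwise'' for bounded $z_L$ should be replaced by the uniform bound $|p_{t_L}^{z_L}-(d/b)^{z_L}|\le (\sup_L z_L)\,|p_{t_L}-d/b|\to 0$ combined with $1-(d/b)^{z_L}\ge 1-d/b>0$, or by restricting to subsequences on which $z_L$ is eventually constant.
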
 

\begin{proof}
When $z=1$, 1) follows from \cite[Theorem~3.4.1]{athreya1972branching},
and 3b) follows from the fact that $\{Z \to 0\} = \cup_{L \geq 1} \{T_L = \infty\}$ almost surely. It is also well-known that $(Z_t e^{-st})_{t\geq 0}$ is a martingale that almost-surely 
converges to a random variable $W$ such that $\{W>0\}=\{Z\not\to0\}$. 

For $z \geq 1$, $Z$ has the same distribution as $\sum_{k=1}^{z} Z^{(k)}$ where $Z^{(k)}$ are i.i.d.\ and distributed as $Z$ under $\P_1$, see e.g.\ \cite[Eq.~(10) on p.~105]{athreya1972branching}.  
So $\P_z(Z \not \to 0) = 1-[1-\P_1(Z \not \to 0)]^z$, implying 1). 
For 3a) and 3b), it is enough to assume that $z_L \to z \in \N$ or $z_L \to \infty$.
If $z_L \to z$, note for 3b) that
\[
\bigcap_{k=1}^{z_L} \{T^{(k)}_{L/z_L} = \infty\}\subset \{T_L = \infty \} \subset\{Z \to 0\}
\quad \text{ almost surely,}
\]
where $T^{(k)}_L$ is the analogue of $T_L$ for $Z^{(k)}$,
and $\P_1(T_{L/z_L}=\infty)^{z_L} \to (d/b)^z$ by the case $z=1$.
For 3a), note that $T_0 = \max_{1 \leq k \leq z_L} T^{(k)}_0$, so the family of distributions of $T_0$ under $\P_{z_L}(\cdot \mid Z \to 0)$ is tight. 
Thus
\[
\P_{z_L}(T_0 > t_L) = \P_{z_L}(T_0 > t_L, Z \to 0) + \P_{z_L}(Z \not \to 0) \sim \P_{z_L}(Z \not \to 0)
\]
since the first term after the equality converges to $0$ and the second is bounded away from $0$.

If $z_L \to \infty$, note for 3a) and 3b) that both $\P_{z_L}(T_L < \infty)$
and $\P_{z_L}(T_0>t_L)$ are not smaller than $\P_{z_L}(Z \not \to 0)$ which converges to $1$ as $L\to \infty$.

Let us next show 2).
On $\{Z\to 0\}$, $\log^+( \sup_{t \geq 0} Z_{t})$ is almost surely bounded. 
On \mbox{$\{Z\not\to0\}$}, 
\begin{equation}
\label{e:prlemBranchSupercrt1}
  \log Z_t - \log z - s t
    =  \log\bigg( \frac{1}{z} \sum_{k=1}^{z} Z^{(k)}_t e^{-st} \bigg).
\end{equation}
Now note that $\{Z\not \to 0\}= \bigcup_{k=1}^z\{Z^{(k)} \not \to 0\}$ and that, on $\{Z^{(k)} \not \to 0\}$, 
$Z^{(k)}_t e^{-st}$ almost surely is positive, c\`adl\`ag,
has positive left limits and converges to a positive limit. 
Hence $0<\inf_{t \geq 0}Z^{(k)}_te^{-st} \leq \sup_{t \geq 0} Z^{(k)}_te^{-st}<\infty$
on $\{Z^{(k)} \not \to 0\}$, and on $\{Z^{(k)} \to 0\}$ the last inequality also clearly holds.
This shows that $\Xi_z$ is almost-surely finite (and hence tight) for each $z \in \N$.
To finish the proof of 2), it is enough to obtain the constant $C>0$ mentioned therein.
But since the summands inside the last $\log$ in \eqref{e:prlemBranchSupercrt1} are i.i.d.,
it will be provided by the strong law of large numbers once we show that
\begin{equation}\label{e:prlemBranchSupercrt2}
  0 < \E_1\Big[\inf_{t \geq 0} Z_t e^{-st} \Big]
  \quad \text{ and } \quad
  \E_1\Big[\sup_{t \geq 0} Z_t e^{-st} \Big] < \infty.
\end{equation}
The first inequality follows from $\P_1(\inf_{t \geq 0} Z_t e^{-st} > 0)\geq \P_1(W>0)>0$. 
For the second, note that the martingale $Z_t e^{-st}$ is bounded in $L^2$ (see \cite[Eq.(5), p.~109]{athreya1972branching}),
so it follows from Doob's $L^2$ inequality (see e.g.\ \cite[Theorem~II.1.7]{RY99}).
Finally, 3c) follows from 2).
\end{proof}

\section{Stochastic domination}
\label{s:stochdom}
In this section we provide the couplings required in the proofs of Lemmas~\ref{lem:linearbounds},\ref{lem:multitype-without-kinks} and \ref{lem:multitype-sweep}, combining results from \cite{KKO77} and \cite{Ma87}. Since we feel that these arguments are of independent interest,  we state and prove, for two Markov chains $X$, $Y$ in continuous time, a comparison result in terms of an ordered coupling  between~$Y$ and the mapped process $(\varphi(X_t))_{t\ge 0}$ under the assumption of a ``monotone intertwining'' of $\varphi$ and the jump rates of $X$ and $Y$.

Specifically, let $E, F$ be countable sets, $F$ equipped with a partial order $\leq$, and let $\varphi:E\to F$. 
Let $X=(X_t)_{t\geq 0}$, $Y=(Y_t)_{t\geq 0}$ be continuous-time c\`adl\`ag Markov jump processes on $E, F$ with bounded generators $A, B$, respectively.
Here we will say that $Y$ is \emph{monotone} if, for any bounded non-decreasing $g:F\to\R$, $Bg$ is also non-decreasing.
We will write $\P^X_x$ for the law of $X$ started from $x$, $\E^X_x$ for the corresponding expectation, and analogously for $Y$.

\begin{theorem}\label{thm:stochdom}
Assume that $Y$ is monotone and that, for all bounded non-decreasing $g:F \to \R$,
\begin{equation}\label{e:stochdomcond}
A (g \circ \varphi)(x) \leq B g( \varphi(x)) \quad \forall x \in E_0
\end{equation}
where $E_0 \subset E$. Denote by $\tau_0 := \inf \{ t \geq 0 \colon\, X_t \notin E_0\}$
the first time when $X$ exits $E_0$.
Then, for all $x \in E_0$ and $y \in F$ with $\varphi(x) \leq y$,
there exists a coupling $Q$ of $(\varphi(X_t))_{t \geq 0}$ 
under $\P^X_x$ and of $Y$ under $\P^Y_y$ such that
$
Q(\varphi(X_t) \leq Y_t \,\, \forall t \in [0,\tau_0]) = 1,
$
where we interpret $[0,\infty]=[0,\infty)$.
The analogous result holds with the inequalities reversed.
\end{theorem}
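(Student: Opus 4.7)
The proof will combine a generator-level comparison principle (in the spirit of Massey (1987)) with a pathwise coupling construction (in the spirit of Kamae--Krengel--O'Brien (1977)), exploiting that $X$ and $Y$ are pure jump processes with bounded generators. The two main steps are a semigroup stochastic inequality and its lift to a pathwise coupling.

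For the semigroup step, fix a bounded non-decreasing $g:F\to\R$ and $t>0$, and set $h(s,z):=(P^Y_s g)(z)$. Since $B$ is bounded, $h$ is $C^1$ in $s$ with $\partial_s h=Bh$; since $Y$ is monotone and $g$ is non-decreasing, $h(s,\cdot)$ is non-decreasing for every $s\ge 0$. I will apply Dynkin's formula to the process $N_s:=h(t-s,\varphi(X_{s\wedge\tau_0}))$; the drift at each $s<\tau_0$ equals
\[
  -Bh(t-s,\cdot)(\varphi(X_s))\;+\;A\bigl(h(t-s,\varphi(\cdot))\bigr)(X_s),
\]
which is non-positive by \eqref{e:stochdomcond} applied to the bounded non-decreasing function $h(t-s,\cdot)$. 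Hence $(N_{s\wedge\tau_0})_{s\in[0,t]}$ is a supermartingale, and optional stopping together with $\varphi(x)\le y$ and the monotonicity of $h(t,\cdot)$ yields the desired single-time stochastic comparison. To make the boundary contribution on $\{\tau_0<t\}$ absorb cleanly into $\E^Y_y[g(Y_t)]$, the natural device is to extend $\varphi(X_s)$ beyond $\tau_0$ by running an independent copy of $Y$ started from $\varphi(X_{\tau_0})$; this is the main technical subtlety of the argument.

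Iterating the inequality at finitely many times via the strong Markov property yields stochastic domination of the finite-dimensional distributions in the coordinatewise partial order on $F^n$. Kamae--Krengel--O'Brien then produces, for each such finite marginal, a coupling respecting the order almost surely. Because $X$ and $Y$ have bounded rates and hence only finitely many jumps on any compact time interval almost surely, a Kolmogorov-type consistency argument along a dense sequence of times upgrades these f.d.d.\ couplings to a pathwise coupling $Q$ of $(\varphi(X_t),Y_t)_{t\ge 0}$ under which the order is preserved at every time. Restricting to $[0,\tau_0]$ and forgetting the post-$\tau_0$ extension gives the coupling asserted in the theorem; the reversed inequality follows by applying the same argument to $-g$.
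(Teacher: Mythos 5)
Your proposal follows the same overall architecture as the paper's proof — a single-time semigroup comparison followed by an appeal to Kamae--Krengel--O'Brien for the pathwise coupling — but differs in two respects worth noting. First, where the paper obtains the one-time stochastic inequality by invoking Massey's Theorem~3.5 for $\leq_{\rm st}$-comparable generators, you prove it directly via a Dynkin/supermartingale argument on $N_s = P^Y_{t-s}g(\varphi(X_{s\wedge\tau_0}))$; this is a more self-contained route and is correct (the non-positivity of the drift is exactly \eqref{e:stochdomcond} applied to the non-decreasing function $P^Y_{t-s}g$, which is non-decreasing by the monotonicity of $Y$). Second, you handle the boundary at $\tau_0$ by splicing on an order-preserving copy of $Y$ at the process level, whereas the paper modifies the generator $A$ outside $E_0$ so that $\varphi(X)$ mimics $Y$'s jump rates there; these devices accomplish the same thing, though the generator modification keeps the subsequent verification purely within the Markov-chain framework, which is convenient.

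One place you should be more careful is the passage to the pathwise coupling. You write that the strong-Markov iteration yields ``stochastic domination of the finite-dimensional distributions,'' and that KKO then gives a coupling for each finite marginal which you patch by Kolmogorov consistency. As stated this is too weak: finite-dimensional stochastic domination on $F^n$ with the product order does \emph{not} imply the existence of a single pathwise-ordered coupling (KKO themselves exhibit counterexamples), and the finite-marginal Strassen couplings need not be mutually consistent, so there is nothing to patch. What Theorem~4 of Kamae--Krengel--O'Brien actually requires, and what the paper verifies explicitly, is the stronger \emph{conditional-kernel} domination
\[
  \int g\,\mathrm{d}p_{t^n}(z^{n-1},\cdot) \;\leq\; \int g\,\mathrm{d}q_{t^n}(y^{n-1},\cdot)
  \qquad\text{whenever } z_i \leq y_i,
\]
for non-decreasing $g$. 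Your strong-Markov iteration, combined with the monotonicity of $Y$ at the last time point, does in fact deliver precisely this condition (not merely the marginal domination), so the argument is salvageable; but the way you have phrased it suggests a shortcut that does not exist, and you should make explicit that it is the conditional kernels, not the finite-dimensional laws, that are being compared before invoking KKO.
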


\begin{proof}
We will only prove the theorem for the inequalities as first stated; 
the proof for the reversed inequalities is analogous.

Let us first reduce to the case $E_0=E$. 
If $E_0 \subsetneq E$, let $A(x,y)$ denote the matrix entries corresponding to the operator $A$.
If $\varphi$ is not surjective, we enlarge $E$ to $\widehat{E} := E \cup (F \setminus \varphi(E))$ where the union is disjoint,
and extend $\varphi$ to $\widehat{E}$ by setting $\varphi(x)=x$ for $x \notin E$.
Define $\widehat{X}$ to be the Markov jump process on $\widehat{E}$ with generator $\widehat{A}$ 
given by $\widehat{A}(x,y) = A(x,y)\1_E(y)$ if $x \in E_0$, and $\widehat{A}(x,y) = B(\varphi(x), \varphi(y))/\#\varphi^{-1}(\varphi(y))$
otherwise. 
One may verify that 
\eqref{e:stochdomcond} is valid for $\widehat{A}$ in place of $A$ and all $x \in \widehat{E}$,
and it is clear that $X$ and $\widehat{X}$ are equal in distribution up to their first exit of $E_0$. 

From here on we assume $E_0=E$, implying $\tau_0 = \infty$.
In this case, the first step is to use \cite[Theorem~3.5]{Ma87} (with the strong stochastic ordering; see Definition~2.4 therein)
to conclude that, for any $t>0$ and any $x \in E$, $y \in F$ with $\varphi(x) \leq y$,
\begin{equation}
\label{e:prstochdom1}
\begin{aligned}
& \text{
$\varphi(X_t)$ under $\P^X_x$ is stochastically dominated by $Y_t$ under $\P^Y_y$. 
}
\end{aligned}
\end{equation}
First of all, note that our assumptions on $Y$ imply that
its generator $B$ is monotone in the sense discussed in Definition~3.2 in \cite{Ma87}, i.e., 
for any $t>0$ and $y_1 \leq y_2 \in F$,
\begin{equation}
\label{e:prstochdom2}
\text{$Y_t$ under $\P^Y_{y_1}$ is stochastically dominated by $Y_t$ under $\P^Y_{y_2}$.}
\end{equation}
Indeed, this follows from \cite[Theorem~2.2]{Lig85} 
and the fact that the semigroup for $Y$, $\exp(tB)$, has e.g.\ the representation given right before Definition~3.2 in \cite{Ma87}.
To verify that our assumptions imply those of Theorem~3.5 in \cite{Ma87},
note first that $f$, $E'$, $E$ therein correspond to our $\varphi$, $E$, $F$, respectively.
Then note that the mapping $\Phi(\varphi)$ from $\ell_1(E)$ to $\ell_1(F)$ defined before Theorem~3.5 acts by multiplication to the left. 
Its adjoint mapping of multiplication to the right (from $\ell_\infty(F)$ to $\ell_\infty(E)$)
is defined such that $u \Phi(\varphi) \cdot v = u \cdot \Phi(\varphi)v$, i.e.,  $\Phi(\varphi)v(x) := \mathrm{e}_x \Phi(\varphi) \cdot v = v(\varphi(x))$,
where $\mathrm{e}_x$ is the indicator function of $\{x\}$, $x \in E$.
Finally, note that, according to the ordering $\leq_{\rm{st}}$ (cf.\ Definition~2.4 and Proposition~3.1 therein),
$A \Phi(\varphi) \leq_{\rm{st}} \Phi(\varphi) B$ if and only if $A \Phi(\varphi)\1_\Gamma(x) \leq \Phi(\varphi) B \1_\Gamma(x)$ for all $x \in E$ and all 
increasing sets $\Gamma \subset F$; since in this case $\1_\Gamma$ is non-decreasing, this follows from \eqref{e:stochdomcond} (and is actually equivalent to it). 

To finish the proof, we will verify the conditions of \cite[Theorem~4]{KKO77}.
We write $Z=(Z_t)_{t \geq 0}$ with $Z_t:= \varphi(X_t)$.
For $n \geq 2$, $t^n=(t_1, \ldots, t_n) \in [0,\infty)^n$ with $t_1<\cdots<t_n$ and
$z^{n-1} = (z_1,\ldots, z_{n-1}) \in F^{n-1}$,
define the kernel
\[
p_{t^n}(z^{n-1}, B) = \P^X_x(Z_{t_n} \in B \mid \cap_{i=1}^{n-1} \{Z_{t_i} = z_i\} ), \quad B\subset F,
\]
and let $q_{t^n}(y^{n-1}, B)$ denote the analogous kernel for $Y$ in place of $Z$.
The conditions of \cite[Theorem~4]{KKO77} will be verified if we show that,
for any $t^n$, $z^{n-1}$, any $y^{n-1}$ with $z_i \leq y_i$ for $1 \leq i \leq n-1$, and any 
non-decreasing $g:F \to \R$, 
\begin{equation}
\label{e:prstochdom3}
\int g(u) p_{t^n}(z^{n-1}, d u) \leq \int g(u) q_{t^n}(y^{n-1}, d u).
\end{equation}
To this end, note first that, since $Y$ is Markovian,
\begin{equation}
\label{e:prstochdom4}
\int g(u) q_{t^n}(y^{n-1}, du) = \E^Y_{y_{n-1}}[g(Y_{s_n})]
\end{equation}
where $s_n := t_n-t_{n-1}$.
On the other hand, by the Markov property,
\[
\begin{aligned}
  \E^X_x\big[ \1_{\cap_{i=1}^{n-1}\{Z_{t_i}=z_i\}} g(Z_{t_n})\big]
    &= \E^X_x\big[\1_{\cap_{i=1}^{n-1}\{Z_{t_i}=z_i\}} \E^X_{X_{t_{n-1}}}[g(Z_{s_n}) ]\big] \\
    &\leq \P^X_x\big(\cap_{i=1}^{n-1}\{Z_{t_i}=z_i\}\big) \E^Y_{z_{n-1}}\big[g(Y_{s_n})\big]\\
    &\leq \P^X_x\big(\cap_{i=1}^{n-1}\{Z_{t_i}=z_i\}\big) \E^Y_{y_{n-1}}\big[g(Y_{s_n})\big],
\end{aligned}
\]
where for the first inequality we used \eqref{e:prstochdom1} at time $s_{n}$
and for the second inequality we used \eqref{e:prstochdom2}.
Together with \eqref{e:prstochdom4}, this shows \eqref{e:prstochdom3}.
To conclude, note that the kernels $p$, $q$ plus the initial states determine all finite-dimensional distributions of $Z$, $Y$, and thus completely characterize their distributions in Skorokhod space (see e.g.\ \cite[Section~14]{Bil68}). Thus the construction in \cite[Theorem~4]{KKO77} provides the desired coupling.
\end{proof}

\section{A functional CLT for renewal reward processes}
\label{s:RenRew}
In this section we provide a functional central limit theorem for renewal reward processes, thus  completing the proof of Theorem~\ref{theorem-speedCLT} that was given in Section~\ref{sec3_4}.

Let $(X_n, \tau_n)$, $n \in \N$, be an i.i.d.\ sequence of $\R\times(0,\infty)$-valued random variables.
We assume that $X_1$ and $\tau_1$ are both square-integrable.
Define
\[
T_n := \tau_1 + \cdots + \tau_n, \qquad S_n = X_1 + \cdots + X_n,
\]
and set
\[
N_t := \sup \{ n \in \N \colon\, T_n \leq t\}, \qquad Z_t := S_{N_t},
\]
where in the above we take $\sup \emptyset = 0$.
By the SLLN for sums of i.i.d.\ random variables,
\[
\lim_{n \to \infty} \frac{T_n}{n} = \E[\tau_1] =: \theta \quad \text{ and } \quad 
\lim_{n \to \infty} \frac{S_n}{n} = \E[X_1] =: \mu \quad \text{almost surely,}
\]
and an interpolation argument shows that (see e.g.\ \cite[Theorems~2.5.10 and 2.5.14]{EKM97}),
\[
\lim_{t \to \infty} \frac{N_t}{t} = \frac{1}{\theta} \quad \text{ and } \quad
\lim_{t \to \infty} \frac{Z_t}{t} = \frac{\mu}{\theta} =: v \quad \text{almost surely.} 
\]
Here we will prove a functional central limit theorem for $Z_t$, as stated next.

\begin{theorem}\label{thm:FCLTRenRew}
Assume that $\sigma := \sqrt{\E[(X_1 - v \tau_1)^2]/\theta} > 0$. Then
\[
\Big(\frac{Z_{nt} - n tv}{\sigma \sqrt{n}} \Big)_{t \geq 0} \quad \overset{d}{\longrightarrow} \quad W
\]
where $W=(W_t)_{t \geq 0}$ is a standard Brownian motion and ``$\overset{d}{\longrightarrow}$'' denotes convergence in distribution as $n \to \infty$ in the space of càdlàg functions from $[0,\infty)$ to $\R$ equipped with the Skorokhod $J_1$-topology.
\end{theorem}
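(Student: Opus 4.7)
The plan is to reduce to Donsker's invariance principle applied to the centered i.i.d.\ sequence $Y_k := X_k - v\tau_k$. Since $v = \mu/\theta$, we have $\E[Y_k]=0$ and $\mathrm{Var}(Y_k) = \E[(X_1-v\tau_1)^2] = \sigma^2\theta =: \sigma_Y^2$. Setting
\[
\tilde U_n(t) := \frac{1}{\sigma_Y\sqrt n}\sum_{k=1}^{\lfloor nt\rfloor} Y_k,
\]
Donsker's theorem yields $\tilde U_n \Rightarrow W$ in $D([0,\infty),\R)$ with the Skorokhod $J_1$-topology.

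Next I would use the identity $Z_{nt} - ntv = (S_{N_{nt}} - vT_{N_{nt}}) + v(T_{N_{nt}} - nt)$ to decompose
\[
\frac{Z_{nt} - ntv}{\sigma\sqrt n}
  \;=\; \sqrt{\theta}\,\tilde U_n\!\bigl(\phi_n(t)\bigr) \;+\; \frac{v\,(T_{N_{nt}} - nt)}{\sigma\sqrt n},
\qquad \phi_n(t) := \frac{N_{nt}}{n}.
\]
By the functional strong law of large numbers for renewal processes (an interpolation argument based on the SLLN for $T_n$), $\phi_n \to \phi$ almost surely, uniformly on compact time intervals, where $\phi(t) := t/\theta$. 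A composition / random-time-change theorem in the $J_1$-topology (e.g.\ Whitt, \emph{Stochastic-Process Limits}, Thm.~13.2.4) applies here because the limit $W$ has continuous sample paths and $\phi$ is continuous and non-decreasing; hence $\tilde U_n \circ \phi_n \Rightarrow W \circ \phi$ jointly in $J_1$. Since $W\circ\phi$ is a centered Gaussian process with covariance $\min(s,t)/\theta$, multiplying by $\sqrt\theta$ yields a standard Brownian motion $\tilde W$ in distribution, which will furnish the claimed limit for the first summand.

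For the boundary error, I would observe that $T_{N_{nt}} \le nt < T_{N_{nt}+1}$, so $|T_{N_{nt}} - nt| \le \tau_{N_{nt}+1}$. Fixing $T>0$ and using the SLLN-based bound $\P(N_{nT} \le n(T+1)/\theta) \to 1$, the error term is dominated on a high-probability event by $\max_{1\le k\le \lceil n(T+1)/\theta\rceil+1}\tau_k / (\sigma\sqrt n / v)$. A standard truncation shows that $\E[\tau_1^2]<\infty$ implies $\max_{k\le n}\tau_k/\sqrt n \to 0$ in probability, via
\[
n\,\P(\tau_1 > \eps\sqrt n) \;\le\; \eps^{-2}\E\!\left[\tau_1^2\,\1_{\{\tau_1 > \eps\sqrt n\}}\right] \;\xrightarrow{n\to\infty}\; 0.
\]
Hence the boundary term vanishes in probability, uniformly on $[0,T]$, for every $T$.

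Finally I would combine the two steps by a Slutsky-type argument in $D([0,\infty),\R)$: convergence of the first summand to a standard Brownian motion in $J_1$ together with uniform-on-compacts convergence of the second summand to $0$ in probability implies the desired joint convergence. The main technical obstacle is justifying the composition step in the $J_1$-topology rather than only at the level of finite-dimensional distributions; this is precisely why the \emph{functional} SLLN for $N_{nt}$ (and not just the pointwise SLLN) and the continuity of both $W$ and $\phi$ are exploited. Everything else is classical, and $\sigma > 0$ is used only to ensure the limiting Gaussian process is nondegenerate.
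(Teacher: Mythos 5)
Your proposal is correct and follows essentially the same route as the paper's own proof: both rest on Donsker applied to the centered sequence $Y_k = X_k - v\tau_k$, the random time change $\varphi_n(t)=N_{nt}/n \to t/\theta$ (the paper proves uniform convergence in probability, you assert a.s.; either suffices), the identity bounding the discrepancy between $Z_{nt}-ntv$ and the time-changed partial sums by $|v|\,\tau_{N_{nt}+1}$, and a square-integrability estimate showing this maximum is $o(\sqrt n)$ in probability. The only cosmetic difference is the reference used for the composition step (Whitt vs.\ Billingsley §17); otherwise the decomposition, the key lemmas invoked, and the order of the argument coincide.
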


\begin{proof}
We adapt the proof of Theorem~1.4(b) in \cite{HHSST14}.
First note that, by the Donsker--Prokhorov invariance principle (see e.g.\ \cite[Theorem 1.2(c) in Chapter 5]{ethier2009markov}) for sums of i.i.d.\ random variables, 
\[
\widehat{W}^{(n)} = \big(\widehat{W}^{(n)}_t\big)_{t \geq 0}
 := \bigg( \frac{1}{\sigma \sqrt{\theta} \sqrt{n}}\sum_{k=1}^{ \lfloor n t \rfloor}(X_k - v \tau_k) \bigg)_{t \geq 0}
 \overset{d}{\longrightarrow} W.
\]
Consider the random time change $\varphi_n(t) := N_{nt}/n$.
Let us show that
\begin{equation}\label{e:prFCLTRenRew1}
  \lim_{n \to \infty} \sup_{t \in [0,M]} \Big| \varphi_n(t) - \frac{t}{\theta}\Big|
    = 0 \quad \text{in probability for any $M>0$.}
\end{equation}
Indeed, since $T_n > t$ if and only if $N_t < n$,
given $\delta, \varepsilon>0$,
there are $\delta', \varepsilon'> 0$ such that, for large $n$,
\[
\P\bigg(\sup_{t \in [\delta, \infty)} \Big|\frac{\varphi_n(t)}{t} - \frac{1}{\theta} \Big|> \varepsilon \bigg)
\leq \P\big( \exists k \geq \delta'n \colon\, |T_k/k - \theta| \geq \varepsilon' \big) \underset{n \to \infty}{\longrightarrow} 0
\]
by the SLLN for $T_n$. 
On the other hand, taking $\delta < \theta\varepsilon/2$, we obtain $\varepsilon''>0$ such that
\[
  \P\bigg(\sup_{t \in [0, \delta]} \Big|\varphi_n(t) - \frac t\theta \Big|> \varepsilon \bigg)
    \leq \P\Big( \frac{N_{n\delta}}{n \delta} \geq \frac{1}{\theta}+\varepsilon''\Big).
    \numberthis\label{ndelta}
\]
Indeed, for all $t \leq \delta$, $|\varphi_n(t) - t/\theta| \leq \varphi_n(t) + t/\theta \leq N_{n \delta}/n + \delta/\theta$, thus $\sup_{t \in [0, \delta]} |\varphi_n(t) - t/\theta |$ satisfies the same inequality. On the other hand, in the event in the l.h.s.\ of \eqref{ndelta}, 
\[ \sup_{t \in [0, \delta]} \Big|\varphi_n(t) - \frac t\theta \Big| > \eps > \delta(2/\theta+\varepsilon'') \] for some $\varepsilon''>0$
by the assumption on $\delta$. This implies that $N_{n \delta}/{n \delta} > 1/\theta + \varepsilon''$, as asserted. 
By the SLLN for $N_t$, the r.h.s.\ of~\eqref{ndelta} converges to $0$ as $n\to\infty$.
This shows \eqref{e:prFCLTRenRew1}. In particular, $\varphi_n$ converges in probability with respect to the Skorokhod topology to the linear function $t \mapsto t/\theta$. Using a time-change argument as in Section~17 of \cite{Bil68} (see in particular (17.7)--(17.9) and Theorem~4.4 therein), we conclude that $t \mapsto \widehat{W}^{(n)}_{\varphi_n(t)}$
converges to a Brownian motion time-changed by $t \mapsto t/\theta$, or equivalently, to a Brownian motion multiplied by $1/\sqrt{\theta}$. 
To compare with $Z_t$, note that
\[
  \Big|\frac{Z_{nt} - nt v}{\sigma \sqrt{n}} - \sqrt{\theta}\widehat{W}^{(n)}_{\varphi_n(t)}   \Big|
    = \frac{|v|}{\sigma}\cdot \frac{| T_{N_{nt}} - nt |}{\sqrt{n}}
    \leq \frac{|v|}{\sigma}\cdot \frac{T_{N_{nt}+1} - T_{N_{nt}}}{\sqrt{n}}
\]
so that, for any $M,\varepsilon>0$, there is an $\varepsilon'>0$ such that
\[
\begin{aligned}
  \P\bigg( 
   & \sup_{t \in [0,M]}\Big| \frac{Z_{nt} - nt v}{\sigma \sqrt{n}} - \sqrt{\theta}\widehat{W}^{(n)}_{\varphi_n(t)}  \Big| \geq \varepsilon\bigg) \\[.5em]
   &\leq \P(N_{nM}> 2nM/\theta) + \P\big( \exists k \leq 2nM/\theta+1 \colon\, \tau_k \geq \varepsilon' \sqrt{n} \big) \\[.5em]
   &\leq \P(N_{nM}> 2nM/\theta) + (2nM/\theta+1) \P(\tau_1 \geq \varepsilon' \sqrt{n}).
\end{aligned}
\]
The first term above converges to $0$ as $n\to\infty$ by the LLN for $N_t$, while the second converges to $0$
since $\tau_1$ is square-integrable.
This shows that the Skorokhod distance between 
$\sqrt{\theta}\widehat{W}^{(n)}_{\varphi_n(t)} $ and $(Z_{nt} - nt v)/(\sigma \sqrt{n})$ converges to zero in probability,
concluding the proof.
\end{proof}

\end{appendix}

\begin{acks}[Acknowledgements] We thank Jason Schweinsberg for inspiring discussions and the Hausdorff Institute of Mathematics for its hospitality during the Junior Trimester Program ``Stochastic modelling in the life science: From evolution to medicine'' in 2022.  We also thank Joachim Krug and Su-Chan Park for pointing us to the reference \cite{guess1974limit}. We are grateful to two anonymous reviewers for valuable suggestions and comments.
\end{acks}

\begin{funding}
The third author was partially supported by CNPq grants 313921/2020-2, 406001/2021-9 and FAPEMIG grants APQ-02288-21, RED-00133-21.

Funding acknowledgements by the fourth author: This paper was supported by the János Bolyai Research Scholarship of the Hungarian Academy of Sciences.  Project no.\ STARTING 149835 has been implemented with the support provided by the Ministry of Culture and Innovation of Hungary from the National Research, Development and Innovation Fund, financed under the STARTING\_24 funding scheme.
\end{funding}

\bibliographystyle{imsart-nameyear}
\bibliography{literature_revision2}

\end{document}